\newcommand{\since}[1]{\tag*{\scriptsize{(#1)}}}
\newcommand{\smallfrac}[2]{\textstyle\frac{#1}{#2}\displaystyle}
\newcommand{\Perm}{S}
\newcommand{\Cyc}{C}
\newcommand{\alphacyc}{\alpha^{\text{cyc}}}
\newcommand{\betacyc}{\beta^{\text{cyc}}}
\DeclareMathOperator{\mob}{M\ddot{o}b}
\DeclareMathOperator{\lcm}{lcm}
\DeclareMathOperator{\type}{type}
\DeclareMathOperator{\per}{per}
\DeclareMathOperator{\ev}{ev}
\DeclareMathOperator{\co}{co}
\newcommand{\PP}{\mathbb{P}}
\DeclareMathOperator{\Alt}{Alt}
\DeclareMathOperator{\alt}{alt}
\newcommand{\gam}{\gamma}
\newcommand{\ee}{\chi}
\newcommand{\ZZ}{\mathbb{Z}}
\newcommand{\card}[1]{{\lvert #1 \rvert}} 	
\newtheorem{thm}{Theorem}[section]
\newtheorem{prop}[thm]{Proposition}
\newtheorem{lem}[thm]{Lemma}
\newtheorem{cor}[thm]{Corollary}
\newtheorem{conj}[thm]{Conjecture}
\newtheorem*{question}{Question}
\theoremstyle{definition}
\newtheorem{defn}[thm]{Definition}
\renewcommand{\qedsymbol}{$\blacksquare$}
\begin{document}

\title{Exact and asymptotic enumeration of cyclic permutations according to descent set}
\date{March 14, 2019}
\author{Sergi Elizalde\thanks{Department of Mathematics,
Dartmouth College, Hanover, NH 03755. E-mail: \url{sergi.elizalde@dartmouth.edu}. Partially supported by Simons Foundation grant \#280575.}
\and Justin M. Troyka\thanks{Department of Mathematics,
Dartmouth College, Hanover, NH 03755. E-mail: \url{jmtroyka@gmail.com}.}}
\maketitle

\begin{abstract}
Using a result of Gessel and Reutenauer, we find a simple formula for the number of cyclic permutations with a given descent set, by expressing it in terms of ordinary descent numbers (i.e., those counting all permutations with a given descent set). We then use this formula to show that, for almost all sets $I \subseteq [n-1]$, the fraction of size-$n$ permutations with descent set $I$ which are $n$-cycles is asymptotically $1/n$. As a special case, we recover a result of Stanley for alternating cycles. We also use our formula to count $n$-cycles with no two consecutive descents.
\end{abstract}

\section{Introduction} \label{Introduction}

Two of the most common ways to represent a permutation are its one-line notation, which views the permutation as a word,
and its expression as a product of disjoint cycles, which depicts the algebraic nature of the permutation.
Many known properties of permutations concern one of these two perspectives.
For example, by considering the one-line notation of a permutation, one can study descent sets, pattern avoidance, longest increasing subsequences, etc.
On the other hand, by viewing a permutation as a product of cycles, one can study its number of fixed points, its number of cycles, whether it is an involution, etc.

However, the interplay between these two depictions of permutations is far from being understood. 
We are particularly interested in how the cycle structure of a permutation (with a focus on permutations that consist of one cycle) and its descent set relate.
The first major breakthrough in this area is the seminal paper of Gessel and Reutenauer~\cite{GR}, which expresses the number of permutations with a given cycle structure and descent set as an inner product of symmetric functions,
in the same vein as previous formulas by Gessel~\cite{Gessel} counting permutations with a given descent set and inverse descent set.
Some of the results in~\cite{GR} relate to the work of Diaconis, McGrath, and Pitman~\cite{DMP}, who approach problems involving descents and cycles from the perspective of probability and riffle shuffles. For the special case of cyclic permutations (sometimes called {\em cycles} or {\em $n$-cycles}), an unexpected property of the distribution of descent sets was later given in~\cite{Elizalde}, showing that descent sets of $n$-cycles, when restricted to the first $n-1$ entries, have the same distribution as descent sets of permutations of length $n-1$. Around the same time, using results from~\cite{GR}, Stanley gave a formula for the number of cycles which are alternating~\cite{StanleySurvey}. He used it to show that, on a random permutation, the events ``being a cycle" and ``being alternating" are independent, in a precise sense that will be discussed later.

Other work has explored the framework of Gessel and Reutenauer~\cite{GR} in more general settings. Poirier~\cite{PoirierThesis,Poirier} replaces the group of permutations with a certain wreath product, whose elements can be thought of as ``$r$-colored'' permutations for given $r$. Their results on counting elements of the wreath product with given descent set and cycle type are analogous to those in~\cite{GR}, and they specialize to the case of the hyperoctahedral group (the group of ``two-colored'' permutations). Some of these results on the hyperoctahedral group were found previously by Reiner~\cite{Reiner}. More recently, Novelli, Reutenauer, and Thibon \cite{NRT} generalize the work of Gessel and Reutenauer in a different way: instead of looking at the consecutive pairs of values to define the descent set of a permutation, they look at the consecutive $k$-tuples of values to define the ``$k$-descent code'' of a permutation. In the combinatorics of Hopf algebras, descent sets of permutations have a deep connection to non-commutative symmetric functions and quasi-symmetric functions, and \cite{NRT} defines Hopf algebras that have the same connection to $k$-descent codes.

After introducing some notation and background in Section~\ref{NotationSection}, we present our main theorem in Section~\ref{MainTheoremSection}. 
It gives four equivalent formulas relating the enumeration of cyclic permutations with a given descent set and the enumeration of all permutations with a given descent set, the latter being a well-understood problem. In Section~\ref{sec:consequences} we discuss some consequences of our main theorem, including the result from~\cite{Elizalde} mentioned above, and Stanley's formula counting alternating cycles, as well as some extensions of it. In Section~\ref{AsymptoticSection} we study the asymptotic implications of our main theorem, generalizing Stanley's observation about the independence of being cyclic and being alternating, and conjecturing that it can be even further generalized.

The last section of the paper relates to pattern avoidance. The first significant result about the interaction of pattern avoidance and cycle structure of permutations is due to Robertson, Saracino and Zeilberger~\cite{RSZ},  who showed that the number of fixed points has the same distribution on permutations avoiding two different patterns of length~3. This was later generalized in~\cite{Elifpexc,EliPak} by including also the distribution of the number of excedances. 
At the conference Permutation Patterns 2010, Stanley proposed the problem of enumerating cyclic permutations that avoid a given pattern of length $3$. While this problem remains unsolved, cyclic permutations avoiding some specific sets of patterns ($\{321,2143,3142\}$ and $\{213,312\}$)
have been enumerated by Archer and Elizalde~\cite{ArcEli}. In addition, permutations with restrictions that involve both their one-line notation and their cycle structure (e.g. cycles whose excedances are increasing) are counted in~\cite{Elicont} using continued fractions. 

In Section~\ref{sec:monotone} we consider a related problem, namely that of counting cycles that avoid a monotone {\em consecutive} pattern.  Avoiding a monotone consecutive pattern of length $k$ is equivalent to not having $k-1$ consecutive ascents (or descents).
In general, consecutive patterns differ from classical patterns by adding the requirement that the entries in an occurrence of the pattern have to be adjacent; see~\cite{Eli_survey} for a survey of the literature in the subject. In Section~\ref{MonotoneSubsection1}, the number of permutations with a given cycle type avoiding a consecutive pattern of length $k$ is expressed as an inner product of symmetric functions. In Section~\ref{cyc123}, we use this, along with our main theorem, to give an explicit formula counting permutations avoiding the consecutive pattern $\underline{123}$ or $\underline{321}$.

\section{Notation and definitions} \label{NotationSection}

In this section we set up the notation that is used through the rest of the paper.

Let $n\ge1$, and let $\Perm_n$ denote the symmetric group on $[n]=\{1,2,\dots,n\}$. For a permutation  $\pi \in \Perm_n$, let $\type(\pi)$ denote the cycle type of $\pi$, that is, the partition of $n$ whose parts are the lengths of the cycles of $\pi$.
Let $D(\pi) \subseteq [n-1]$ denote the descent set of $\pi$, that is,
\[ D(\pi) = \{i \colon \pi(i) > \pi(i+1)\}. \]
Let $\Cyc_n$ denote the set of permutations in $\Perm_n$ whose cycle type is $(n)$. We call the elements of $\Cyc_n$ {\em cyclic permutations}, {\em $n$-cycles}, or simply {\em cycles}. 
\begin{defn} For $I \subseteq [n-1]$, let
\[ \begin{matrix}
\alpha_n(I) = \#\{\pi \in \Perm_n \colon D(\pi) \subseteq I\}; \quad & \alphacyc_n(I) = \#\{\pi \in \Cyc_n \colon D(\pi) \subseteq I\}; \\
\beta_n(I) = \#\{\pi \in \Perm_n \colon D(\pi) = I\}; \quad & \betacyc_n(I) = \#\{\pi \in \Cyc_n \colon D(\pi) = I\}.
\end{matrix}\]
\end{defn}
Thus we have four descent functions $\alpha_n, \beta_n, \alphacyc_n, \betacyc_n \colon 2^{[n-1]} \to \mathbb{N}$. The numbers $\alpha_n(I)$ and $\beta_n(I)$ are well-understood and easy to compute; see \cite[Sec.\ 1.4 \& Example 2.2.4]{Stanley1}.

We use $\mob$ to denote the M\"obius function from number theory. As a reminder, it is defined on positive integers as follows:
\[ \mob(d) = \begin{cases} 1 & \text{if $d$ is square-free and has an even number of prime factors}; \\
-1 & \text{if $d$ is square-free and has an odd number of prime factors}; \\
0 & \text{if $d$ is not square-free}.
\end{cases} \]

For $I$ a set of integers and $n\ge1$, define $(I,n) = \text{gcd}(I \cup \{n\})$; and for integer $d \ge 1$, define $I/d = \{i/d \colon \text{$i \in I$ and $d\mid i$}\}$. The following lemma is straightforward.

\begin{lem} \label{basiclemma} (a) For any integers $d$ and $e$, $(I/e)/d = I/(de)$. (b) If $(I,n) = m$ and $e \mid m$, then $(I/e, n/e) = m/e$. \hfill $\square$
\end{lem}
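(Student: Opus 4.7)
Both parts follow by unpacking the definitions, so the plan is simply to track divisibilities carefully.

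For part (a), I would start from the outside: by definition,
\[ (I/e)/d = \{j/d : j \in I/e \text{ and } d \mid j\}, \]
and expanding the inner $I/e$ rewrites this as $\{(i/e)/d : i \in I, \ e \mid i, \ d \mid (i/e)\}$. The key elementary step is the observation that, when $e \mid i$, the condition $d \mid (i/e)$ is equivalent to $de \mid i$, and in that case $(i/e)/d = i/(de)$. Rewriting the set using this equivalence yields $\{i/(de) : i \in I, \ de \mid i\}$, which is exactly $I/(de)$. The only minor subtlety to be careful about is that the condition ``$e \mid i$'' is subsumed by ``$de \mid i$'', so nothing gets lost on either side.

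For part (b), I would first note that since $(I,n) = m$ and $e \mid m$, every element of $I \cup \{n\}$ is divisible by $m$ and hence by $e$. Therefore no element of $I$ is dropped when forming $I/e$, and we may write $I/e = \{i/e : i \in I\}$ (and similarly $n/e$ is an integer). The remaining step is the standard gcd identity $\gcd(\{a/e : a \in A\}) = \gcd(A)/e$ whenever $e$ divides every element of $A$; applying it to $A = I \cup \{n\}$ gives
\[ (I/e, n/e) = \gcd\{i/e : i \in I \cup \{n\}\} = \gcd(I \cup \{n\})/e = m/e, \]
as desired.

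Neither part presents a real obstacle — they are bookkeeping lemmas whose role is to let later arguments freely compose the operations $I \mapsto I/d$ and track how the invariant $(I,n)$ transforms. I expect the write-up to take just a few lines per part, with the only ``thinking'' step being the equivalence $d \mid (i/e) \iff de \mid i$ used in part (a).
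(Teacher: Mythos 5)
Your argument is correct and is exactly the intended one: the paper marks this lemma with $\square$ and offers no proof, calling it straightforward, and your unpacking of the definitions (the equivalence $d \mid (i/e) \iff de \mid i$ for part (a), and the identity $\gcd(\{a/e : a \in A\}) = \gcd(A)/e$ for part (b)) supplies precisely the bookkeeping the authors left implicit. Nothing further is needed.
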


Given a subset $I \subseteq [n-1]$, let $\co(I)$ denote the associated composition of $n$; that is, if $I = \{i_1 < i_2 < \cdots < i_{k-1}\}$, then $\co(I) = (i_1, i_2 - i_1, i_3 - i_2, \ldots, i_{k-1} - i_{k-2}, n - i_{k-1})$. If $\mu = (\mu_1, \ldots, \mu_k)$ is a composition and $d \mid \mu_j$ for each $j$, then define $\mu/d = (\mu_1/d, \ldots, \mu_k/d)$.

\begin{lem} \label{sillylemma} Let $I \subseteq [n-1]$ with $\co(I) = (\mu_1, \ldots, \mu_k)$, and let $d \mid (I,n)$. Then $d \mid \mu_j$ for each $j$, and $\co(I/d) = \mu/d$. \hfill $\square$
\end{lem}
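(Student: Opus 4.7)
The plan is a direct unpacking of definitions. Write $I = \{i_1 < i_2 < \cdots < i_{k-1}\}$, so that by the definition of $\co$ we have $\mu_1 = i_1$, $\mu_j = i_j - i_{j-1}$ for $2 \le j \le k-1$, and $\mu_k = n - i_{k-1}$.

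For the first claim, I would use the hypothesis $d \mid (I,n) = \gcd(I \cup \{n\})$, which means $d$ divides every $i_j$ and also $d \mid n$. Consequently $d$ divides $i_1 = \mu_1$, each difference $i_j - i_{j-1} = \mu_j$ for $2 \le j \le k-1$, and $n - i_{k-1} = \mu_k$. This gives $d \mid \mu_j$ for every $j$, which also makes the expression $\mu/d$ well-defined.

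For the second claim, note that since $d$ divides every element of $I$, the definition $I/d = \{i/d \colon i \in I,\ d \mid i\}$ keeps all elements of $I$ and is order-preserving. Hence $I/d = \{i_1/d < i_2/d < \cdots < i_{k-1}/d\}$, and each $i_j/d \in [n/d - 1]$ since $0 < i_j < n$ with both endpoints divisible by $d$. Applying the definition of $\co$ to $I/d$ (as a subset of $[n/d - 1]$) yields
\[
\co(I/d) = \left( \tfrac{i_1}{d},\ \tfrac{i_2 - i_1}{d},\ \ldots,\ \tfrac{i_{k-1} - i_{k-2}}{d},\ \tfrac{n - i_{k-1}}{d} \right) = \mu/d.
\]

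There is no real obstacle here; the only point that requires a moment's care is the edge case $I = \emptyset$, where $k = 1$, $\co(I) = (n)$, and both sides of the second identity equal $(n/d)$. Since Lemma~\ref{basiclemma} has already established that the operation $I \mapsto I/d$ behaves well under composition of divisors, the verification above completes the argument.
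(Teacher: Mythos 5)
Your proof is correct and is precisely the straightforward definitional verification that the paper omits (the lemma is stated with a tombstone and no written proof): divisibility of each $\mu_j$ follows from telescoping the elements of $I \cup \{n\}$, and $\co(I/d) = \mu/d$ follows because $I \mapsto I/d$ preserves order and keeps every element when $d$ divides all of them. The only cosmetic point is that your closing appeal to Lemma~\ref{basiclemma} is unnecessary---the computation preceding it already completes the argument.
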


We use $\lambda\vdash n$ to denote that $\lambda$ is a partition of $n$, and $\mu\vDash n$ to denote that $\mu$ is a composition of $n$. We let $\ell(\mu)$ denote the length of $\mu$, \textit{i.e.}\ the number of parts.

\begin{defn} \label{lyndon} Let $w$ be a (finite) non-empty word on alphabet $\{1, 2, \ldots\}$. We say $w$ is a \textit{primitive word} if $w$ is not equal to any of its non-trivial cyclic shifts: that is, if $w = uv$ with $u$ and $v$ non-empty then $w \not= vu$. We say $w$ is a \textit{Lyndon word} if $w$ is strictly less than all of its non-trivial cyclic shifts in the lexicographic order: that is, if $w = uv$ with $u$ and $v$ non-empty, then $w < vu$ in the lexicographic order.
\end{defn}

Note that every Lyndon word is primitive, and every primitive word has exactly one cyclic shift that is a Lyndon word. It is well-known (see \cite[Thm.\ 5.1.5]{L} or \cite[Exer.\ 7.89.d]{Stanley2}) that every word has a unique factorization into a weakly decreasing (in the lexicographic order) sequence of Lyndon words: that is, for every word $w$ there is a unique sequence of Lyndon words $u_1 \ge u_2 \ge \cdots \ge u_k$ such that $w = u_1u_2\dots u_k$.

\begin{defn} \label{type} Given a word $w$ whose factorization into Lyndon words is $w = u_1u_2\dots u_k$, the \textit{type} of $w$, denoted $\type(w)$, is the partition whose parts are equal to the lengths of the Lyndon words $u_1, u_2, \ldots, u_k$, arranged in order of weakly decreasing length. The \textit{evaluation} of $w$ is the weak composition $\ev(w) = (\mu_1, \mu_2,\ldots)$ such that $\mu_j$ is the number of $j$'s in $w$. The \textit{period} of $w$, denoted $\per(w)$, is the length of the shortest word $v$ such that $w = v^r$ for some $r$.
\end{defn}

Let $x^\mu$ denote the monomial $x_1^{\mu_1}x_2^{\mu_2}\cdots$. Given a partition $\lambda$, let $L_\lambda$ denote the symmetric function
\begin{equation}\label{eq:Ldef} L_\lambda = \sum_{\type(w) = \lambda} x^{\ev(w)}, \end{equation}
where the sum is over all words $w$ with type $\lambda$. We will write $L_n$ instead of $L_{(n)}$. It is well-known (see for instance \cite[eqn.\ (2.2)]{GR}, \cite[Exer.\ 7.89.a]{Stanley2} or \cite[Thm.\ 7.2]{Reutenauer}) that
\[ L_n = \frac{1}{n} \sum_{\text{$d \mid n$}} \mob(d)\,p_d^{n/d}, \]
where $p_d = x_1^d + x_2^d + \cdots$, the power-sum symmetric function.

Given a partition $\lambda$ and a weak composition $\mu = (\mu_1, \mu_2, \ldots)$, let $a_{\lambda,\mu}$ denote the number of words of type $\lambda$ and evaluation $\mu$. We will write $a_{n,\mu}$ instead of $a_{(n),\mu}$. By definition,
\begin{equation}\label{eq:L} 
L_\lambda = \sum_\mu a_{\lambda,\mu} x^\mu,
\end{equation}
where the sum is over all weak compositions $\mu$. 

Recall that the multinomial coefficient $\binom{n}{\mu}$ denotes the total number of length-$n$ words with evaluation $\mu$. The following result is well-known.

\begin{lem}[{\cite[Prop.\ 1.4.1]{Stanley1}}] \label{lem:multi}
$$\alpha_n(I) = \binom{n}{\mu}.$$
\end{lem}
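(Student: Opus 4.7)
The plan is to establish the equality by a direct bijective argument. The condition $D(\pi) \subseteq I$ says precisely that $\pi$ has no descents at positions outside of $I$, which is equivalent to saying that $\pi$ is increasing on each of the consecutive blocks of positions
\[ [1, i_1], \ [i_1+1, i_2], \ \ldots, \ [i_{k-1}+1, n], \]
whose sizes are exactly the parts $\mu_1, \mu_2, \ldots, \mu_k$ of the composition $\mu = \co(I)$.

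Given this reformulation, I would observe that specifying a permutation $\pi$ with $D(\pi) \subseteq I$ amounts to choosing, for each block, which subset of $[n]$ will appear as the multiset of values in that block: once the subset assigned to a block is fixed, the only way to arrange those values in the block so as not to create a descent is in increasing order, so the entire permutation is determined by the ordered set partition of $[n]$ into blocks of sizes $\mu_1, \mu_2, \ldots, \mu_k$. Conversely, any such ordered set partition yields a unique permutation in $\alpha_n(I)$.

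To conclude, I would invoke the standard count of ordered set partitions: the number of ways to split $[n]$ into ordered blocks of sizes $\mu_1, \ldots, \mu_k$ is the multinomial coefficient $\binom{n}{\mu} = \frac{n!}{\mu_1!\mu_2!\cdots\mu_k!}$.

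There is essentially no obstacle here; the result is a classical reformulation (indeed the cited Proposition 1.4.1 of \cite{Stanley1}), and the only thing to verify carefully is the equivalence between ``$D(\pi) \subseteq I$'' and ``$\pi$ is increasing on each block determined by $I$,'' which is immediate from the definition of descent.
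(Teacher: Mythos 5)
Your argument is correct and is exactly the classical proof of this fact; the paper does not prove the lemma itself but cites \cite[Prop.\ 1.4.1]{Stanley1}, whose argument is the same bijection with ordered set partitions of $[n]$ into blocks of sizes $\mu_1,\ldots,\mu_k$ that you describe. The only cosmetic quibble is that the values in a block form a \emph{set}, not a multiset, since $\pi$ is a permutation.
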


The proof of our main theorem, in Section \ref{MainTheoremSection}, uses the following important result of Gessel and Reutenauer \cite{GR}.

\begin{thm}[{\cite[Cor.\ 2.2]{GR}}] \label{GRcor}
Let $\mu = \co(I)$. The number of $\pi \in \Perm_n$ with $\type(\pi) = \lambda$ and $D(\pi) \subseteq I$ is equal to $a_{\lambda,\mu}$.
In particular, $\alphacyc_n(I) = a_{n,\mu}$, the number of Lyndon words with evaluation $\co(I)$.
\end{thm}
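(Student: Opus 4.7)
The plan is to derive the theorem from the following symmetric-function identity, which is the main content of the framework in \cite{GR}:
$$L_\lambda(x) \;=\; \sum_{\substack{\pi \in \Perm_n \\ \type(\pi) = \lambda}} F_{n, D(\pi)}(x),$$
where $F_{n,S}$ denotes Gessel's fundamental quasi-symmetric function
$$F_{n, S}(x) \;=\; \sum_{\substack{i_1 \le i_2 \le \cdots \le i_n \\ i_j < i_{j+1} \text{ if } j \in S}} x_{i_1}\, x_{i_2} \cdots x_{i_n}.$$
Granting this identity, my plan is to extract the coefficient of the monomial $x_1^{\mu_1} x_2^{\mu_2}\cdots$ from both sides, where $\mu = \co(I)$. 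On the left this coefficient is $a_{\lambda,\mu}$ directly from \eqref{eq:L}. On the right, the only weakly increasing sequence with evaluation $\mu$ is the one consisting of $\mu_1$ copies of $1$, then $\mu_2$ copies of $2$, and so on, whose strict-increase positions are precisely $I = \{\mu_1,\, \mu_1+\mu_2,\, \ldots\}$. Therefore this monomial appears in $F_{n, D(\pi)}$ with coefficient $1$ exactly when $D(\pi) \subseteq I$, and $0$ otherwise; summing gives $\#\{\pi \colon \type(\pi)=\lambda,\ D(\pi)\subseteq I\}$, which is what we want.

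For the ``in particular'' statement, I would observe that a word has type $(n)$ exactly when its Lyndon factorization consists of a single factor of length $n$, i.e., when the word itself is a Lyndon word; thus $a_{n, \mu}$ is precisely the number of Lyndon words of evaluation $\mu = \co(I)$, and applying the previous paragraph with $\lambda = (n)$ gives $\alphacyc_n(I) = a_{n,\mu}$.

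The main technical obstacle is establishing the $L_\lambda$ identity. The plan here is to follow the bijective strategy of \cite{GR}: build a bijection between length-$n$ words on the alphabet $\{1, 2, \ldots\}$ and pairs $(\pi, \sigma)$, where $\pi \in \Perm_n$ has cycle type equal to the type of the word, and $\sigma$ is a weakly increasing sequence of positive integers contributing to $F_{n, D(\pi)}$ (i.e., strictly increasing at each position of $D(\pi)$). The construction reads the letters of $\sigma$ around each cycle of $\pi$ to produce an unordered collection of ``cycle words''; each is cyclically rotated to its Lyndon representative and the results are concatenated in weakly decreasing lexicographic order to form the word. The delicate points are that the Lyndon condition uniquely fixes the rotation within each cycle so that the map is well-defined and invertible, and that descent-compatibility between $\pi$ and $\sigma$ translates exactly into the evaluation of the resulting word matching the prescribed monomial. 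Once this bijection is verified, the coefficient extraction above completes the proof.
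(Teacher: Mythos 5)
The paper offers no proof of this statement at all --- it is imported verbatim as \cite[Cor.\ 2.2]{GR} --- so there is no internal argument to compare against; what you have done is reconstruct how Gessel and Reutenauer themselves derive it. Your reduction is correct: granting the identity $L_\lambda = \sum_{\type(\pi)=\lambda} F_{n,D(\pi)}$, the coefficient of $x^\mu$ on the left is $a_{\lambda,\mu}$ by Equation~\eqref{eq:L}, and on the right the unique weakly increasing word with evaluation $\mu=\co(I)$ has its strict ascents exactly at the positions of $I$, so $[x^\mu]F_{n,S}=1$ precisely when $S\subseteq I$; summing gives $\#\{\pi : \type(\pi)=\lambda,\ D(\pi)\subseteq I\}$. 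The ``in particular'' clause is also handled correctly, since a word of type $(n)$ is exactly a Lyndon word of length $n$. Note that the identity you rely on is the same external input the paper itself uses: it is restated later as Theorem~\ref{GRmain} in the equivalent inner-product form $\langle L_\lambda, s_I\rangle$ (the two are linked by $\langle h_\mu, F_{n,S}\rangle = [x^\mu]F_{n,S}$ and inclusion--exclusion over subsets of $I$). The only incomplete piece is your sketch of the bijective proof of that identity: reading the letters of $\sigma$ around the cycles of $\pi$, rotating each cycle word to its Lyndon representative, and concatenating is the right construction, but the genuinely hard points --- that descent-compatibility of $(\pi,\sigma)$ forces the cycle words to be primitive, and that the map is invertible --- are exactly where the substance of \cite{GR} lies, and you have flagged rather than resolved them. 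If the identity is taken as given (as this paper does), your proof is complete; if you intend to prove it from scratch, that bijection still needs to be carried out in detail.
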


By Equation~\eqref{eq:L}, $a_{n,\mu}$ is the coefficient of $x^\mu$ in $L_n$.
Gessel and Reutenauer \cite{GR} state their results not in terms of words and Lyndon words, but in terms of primitive necklaces and multisets of primitive necklaces; other literature (such as \cite{Reiner}, \cite{Steinhardt}) has used the term ``ornament'' in place of ``multiset of necklaces''. Multisets of primitive necklaces are interchangeable with words for our purposes, because there is a simple bijection between them that preserves type and evaluation: given a multiset of primitive necklaces, cyclically shift each necklace to make it a Lyndon word, then concatenate the resulting Lyndon words in weakly decreasing order to form a word. This is a bijection because of the unique factorization of a word into Lyndon words. In this paper, words are used instead of multisets of primitive necklaces because it makes the main theorem's proof easier.

\section{The main theorem} \label{MainTheoremSection}

Our main result is a relation between the number of permutations with a given descent set and the number of cycles with a given descent set, expressed in four equivalent identities.

\begin{thm} \label{maintheorem}
Let $I \subseteq [n-1]$, and recall the notation $(I,n) = \gcd(I \cup \{n\})$ and $I/d = \{i/d \colon \text{$i \in I$ and $d \mid i$}\}$. We have
\begin{itemize}
\item[(a)] $\alpha_n(I) = \sum_{\text{$d\mid(I,n)$}} \frac{n}{d}\, \alphacyc_{n/d}(I/d)$;
\item[(b)] $\alphacyc_n(I) = \frac{1}{n}\sum_{\text{$d\mid(I,n)$}} \mob(d)\,\alpha_{n/d}(I/d)$;
\item[(c)] $\betacyc_n(I) = \frac{1}{n} \sum_{\text{$d\mid n$}} \mob(d)\,(-1)^{|I|-|I/d|}\beta_{n/d}(I/d)$;
\item[(d)] $\beta_n(I) = \sum_{\text{$d\mid n$}} (-1)^{|I|-|I/d|}\,\frac{n}{d}\,\betacyc_{n/d}(I/d)$.
\end{itemize}
\end{thm}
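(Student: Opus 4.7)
My plan is to establish (a) by a direct combinatorial argument using the Gessel--Reutenauer correspondence, and then to deduce (b), (c), and (d) by combining Möbius inversion over divisors with inclusion--exclusion over subsets of $I$.

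For (a), by Lemma~\ref{lem:multi}, $\alpha_n(I) = \binom{n}{\mu}$ counts length-$n$ words with evaluation $\mu = \co(I)$, while by Theorem~\ref{GRcor} together with Lemma~\ref{sillylemma}, $\alphacyc_{n/d}(I/d) = a_{n/d,\mu/d}$ counts Lyndon words of length $n/d$ and evaluation $\mu/d$. I would classify each word $w$ of length $n$ and evaluation $\mu$ by writing $w = v^d$, where $v$ is its primitive root. Such a $v$ has length $n/d$ and evaluation $\mu/d$, which requires $d\mid\mu_j$ for every $j$, equivalently $d\mid(I,n)$ by Lemma~\ref{sillylemma}. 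Since the $n/d$ cyclic shifts of a primitive word are all distinct and contain exactly one Lyndon word, the number of primitive roots of length $n/d$ and evaluation $\mu/d$ equals $(n/d)\,a_{n/d,\mu/d}$. Summing over $d\mid(I,n)$ yields (a).

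Identity (b) follows from (a) by Möbius inversion. Setting $F(n,I) = \alpha_n(I)$ and $G(n,I) = n\,\alphacyc_n(I)$, (a) reads $F(n,I) = \sum_{d\mid(I,n)} G(n/d,I/d)$; invoking Lemma~\ref{basiclemma} to compose quotients $(I/e)/d = I/(de)$ and to track the gcd conditions, the usual substitute-and-swap argument verifies $G(n,I) = \sum_{d\mid(I,n)}\mob(d)\,F(n/d,I/d)$, which is (b). Identities (c) and (d) then follow from (b) and (a) respectively by inclusion--exclusion on subsets of $I$, using $\beta_n(I) = \sum_{J\subseteq I}(-1)^{|I|-|J|}\alpha_n(J)$ and its cyclic analog. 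Substituting into (b) and swapping the two sums, for each fixed $d\mid n$ the inner $J$-sum ranges over subsets of $I_d := \{i\in I : d\mid i\}$; via the bijection $K = J/d$ this becomes a sum over $K\subseteq I/d$ with $|K| = |J|$. Splitting the sign as $(-1)^{|I|-|K|} = (-1)^{|I|-|I/d|}(-1)^{|I/d|-|K|}$ collapses the inner sum into $\beta_{n/d}(I/d)$ with overall factor $(-1)^{|I|-|I/d|}$, producing (c); the same manipulation applied to (a) yields (d).

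The main obstacle I anticipate is the bookkeeping in (c) and (d): after interchanging the sums, $d$ must range over all divisors of $n$ rather than just those dividing $(I,n)$, because the constraint ``$d\mid(J,n)$'' only forces $d$ to divide the elements of $J$, and $J$ may be a proper subset of $I$ (or empty). Recognizing the restricted $J$-sum as a sum over subsets of $I/d$ and carefully extracting the sign $(-1)^{|I|-|I/d|}$ so as to reassemble $\beta_{n/d}(I/d)$ is the delicate step; by contrast, the Lyndon-word count for (a) and the Möbius inversion for (b) are both routine once the setup is in place.
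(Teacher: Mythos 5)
Your proposal is correct and follows essentially the same route as the paper: the paper proves (a) by exactly this period/primitive-root/Lyndon-word classification (via Lemma~\ref{lem:multi}, Lemma~\ref{sillylemma}, and Theorem~\ref{GRcor}), obtains (b) from (a) by M\"obius inversion, and (c) from (b) by inclusion--exclusion over subsets of $I$. The only cosmetic difference is that you derive (d) directly from (a) by the same inclusion--exclusion manipulation, whereas the paper reaches (d) from (c) by M\"obius inversion; both are valid links in the same equivalence cycle established in Theorem~\ref{inversions}.
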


Before proving this theorem, we will show that the four formulas are equivalent.

\begin{thm} \label{inversions} If any one of the four equations in Theorem \ref{maintheorem} is true for all $n\ge1$ and $I \subseteq [n-1]$, then they are all true.
\end{thm}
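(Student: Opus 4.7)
The plan is to organize the four statements into a triangle of equivalences: establish (a) $\iff$ (b), then (c) $\iff$ (d), and finally bridge the two pairs via (a) $\iff$ (d). The first two equivalences will follow from Möbius inversion on divisor posets, while the bridge will use the standard inclusion-exclusion relating $\alpha$ to $\beta$ and $\alphacyc$ to $\betacyc$.

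For (a) $\iff$ (b), I would first apply (a) not just to $(n,I)$ but also to $(n/m, I/m)$ for any $m\mid(I,n)$. Using Lemma \ref{basiclemma} to simplify $(I/m)/d = I/(md)$ and $(I/m, n/m) = (I,n)/m$, the identity at $(n/m, I/m)$ becomes, after setting $e=md$,
\[ \alpha_{n/m}(I/m) = \sum_{\substack{e\mid(I,n)\\ m\mid e}} \frac{n}{e}\,\alphacyc_{n/e}(I/e). \]
Thus setting $G(m)=\alpha_{n/m}(I/m)$ and $F(m) = (n/m)\,\alphacyc_{n/m}(I/m)$ on the divisor poset of $(I,n)$, equation (a) for all relevant parameters is precisely $G(m)=\sum_{m\mid e\mid (I,n)} F(e)$, whose Möbius inverse $F(1) = \sum_{e\mid (I,n)} \mob(e)\, G(e)$ is exactly (b). The equivalence (c) $\iff$ (d) runs analogously, with $B(m) = (-1)^{|I/m|}\beta_{n/m}(I/m)$ and $B^c(m) = (n/m)\,(-1)^{|I/m|}\betacyc_{n/m}(I/m)$, where the factors of $(-1)^{|I/m|}$ are designed precisely so that (d) applied at $(n/m, I/m)$ rearranges to $B(m) = \sum_{m\mid e\mid n} B^c(e)$, and (c) is again the Möbius inverse.

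For the bridge (a) $\implies$ (d), I would substitute (a) into the inclusion-exclusion $\beta_n(I) = \sum_{J\subseteq I}(-1)^{|I|-|J|}\alpha_n(J)$ and swap the order of summation to get a double sum indexed by $d\mid n$ and $J\subseteq I$ with $d\mid j$ for all $j\in J$. Such $J$ biject with subsets $K\subseteq I/d$ via $J = dK$, with $|J|=|K|$, so the sign factor splits as $(-1)^{|I|-|J|} = (-1)^{|I|-|I/d|}(-1)^{|I/d|-|K|}$. The inner sum over $K$ is then recognized as the inclusion-exclusion giving $\betacyc_{n/d}(I/d)$, yielding (d) exactly. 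Going the other way, substituting (d) into $\alpha_n(I) = \sum_{J\subseteq I}\beta_n(J)$ and splitting each $J\subseteq I$ as a union $K\cup L$ with $K\subseteq\{i\in I:d\mid i\}$ and $L\subseteq\{i\in I:d\nmid i\}$ produces an inner factor $\sum_{L}(-1)^{|L|}$ that vanishes unless every element of $I$ is divisible by $d$, forcing $d\mid(I,n)$ and recovering (a).

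The main obstacle will be the bookkeeping in the bridge step: the two inclusion-exclusion directions are not symmetric because the sums in (a) and (b) range over $d\mid(I,n)$ while those in (c) and (d) range over $d\mid n$, and it is precisely the sign factor $(-1)^{|I|-|I/d|}$ in (c) and (d) that absorbs the "mismatch" terms. Checking carefully that this sign factor arises naturally from the cardinality bookkeeping $|J|-|J/d| = |J\setminus\{j:d\mid j\}|$, and that the sum over the complementary subsets collapses via the binomial identity $\sum_L (-1)^{|L|} = [L = \emptyset]$, is the only place where anything delicate happens; the rest is routine Möbius inversion.
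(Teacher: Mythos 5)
Your proposal is correct and uses essentially the same ingredients as the paper's proof: Möbius inversion on the divisor poset of $(I,n)$ (resp.\ $n$, with the sign $(-1)^{|I/m|}$ absorbed into the inverted functions) to pass between (a) and (b) and between (c) and (d), and inclusion--exclusion over subsets $J \subseteq I$ — with the alternating sum over elements not divisible by $d$ collapsing to force $d \mid (I,n)$ — to cross between the $\alpha$- and $\beta$-statements. The only difference is organizational: the paper proves the cycle (a)$\Rightarrow$(b)$\Rightarrow$(c)$\Rightarrow$(d)$\Rightarrow$(a), whereas you prove a triangle of two-way equivalences, but the individual arguments coincide (your (a)$\Rightarrow$(d) is the mirror of the paper's (b)$\Rightarrow$(c), and your (d)$\Rightarrow$(a) is the paper's verbatim).
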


\begin{proof} \textbf{(a) $\Rightarrow$ (b): } Fix $n$ and $I$, and set $m = (I,n)$. For $c \mid m$, define
\[ f(c) = \alpha_{cn/m}(I/(m/c)) \qquad \text{and} \qquad g(c) = \frac{cn}{m}\,\alphacyc_{cn/m}(I/(m/c)). \]
Then
\begin{align*} f(c) &= \sum_{\text{$d \mid (I/(m/c), cn/m)$}} \frac{cn}{dm}\,\alphacyc_{\frac{cn}{dm}}((I/(m/c))/d) \since{by (a)} \\
&= \sum_{\text{$d \mid c$}} \frac{cn}{dm}\,\alphacyc_{\frac{cn}{dm}}(I/(dm/c)) \since{by Lemma \ref{basiclemma}} \\
&= \sum_{\text{$d \mid c$}} \frac{dn}{m}\,\alphacyc_{dn/m}(I/(m/d)) \since{substituting $d$ for $c/d$} \\
&= \sum_{\text{$d \mid c$}} g(d).
\end{align*}
Therefore, by M\"obius inversion, $g(c) = \sum_{\text{$d \mid c$}} \mob(d)\,f(c/d)$. Setting $c = m$ yields
\[ n\,\alphacyc_n(I) = \sum_{\text{$d \mid m$}} \mob(d)\,\alpha_{n/d}(I/d), \]
from which (b) follows.

\smallskip

\noindent\textbf{(b) $\Rightarrow$ (c): }
\begin{align*}
\betacyc_n(I) &= \sum_{J \subseteq I} (-1)^{|I|-|J|} \alphacyc_n(J) \since{by Principle of Inclusion--Exclusion} \\
&= \frac{1}{n}\sum_{J \subseteq I} (-1)^{|I|-|J|} \sum_{\text{$d \mid (J,n)$}} \mob(d)\,\alpha_{n/d}(J/d) \since{by (b)} \\
&= \frac{1}{n}\sum_{\text{$d \mid n$}} \mob(d)\sum_{J \subseteq I \cap d\mathbb{Z}} (-1)^{|I|-|J|}\alpha_{n/d}(J/d) \\
&= \frac{1}{n}\sum_{\text{$d \mid n$}} \mob(d)\sum_{J \subseteq I \cap d\mathbb{Z}} (-1)^{|I|-|J|}\sum_{K \subseteq J} \beta_{n/d}(K/d) \\
&= \frac{1}{n}\sum_{\text{$d \mid n$}} \mob(d)\sum_{K \subseteq I \cap d\mathbb{Z}} \beta_{n/d}(K/d) \sum_{K \subseteq J \subseteq I \cap d\mathbb{Z}} (-1)^{|I|-|J|}.
\end{align*}
By the Principle of Inclusion--Exclusion, the sum $\sum_{K \subseteq J \subseteq I \cap d\mathbb{Z}} (-1)^{|I|-|J|}$ (sum over $J$, with $K$ and $I$ fixed) is zero unless $K = I \cap d\mathbb{Z}$, in which case it equals $(-1)^{|I|-|I \cap d\mathbb{Z}|}$. Thus, we obtain
\[ \betacyc_n(I) =\frac{1}{n}\sum_{\text{$d \mid n$}} \mob(d)\,\beta_{n/d}((I\cap d\mathbb{Z})/d)\,(-1)^{|I|-|I \cap d\mathbb{Z}|}. \]
Using that $(I\cap d\mathbb{Z})/d = I/d$ and $|I\cap d\mathbb{Z}| = |I/d|$, we arrive at (c).

\smallskip

\noindent\textbf{(c) $\Rightarrow$ (d): } Fix $n$ and $I$. For $c \mid n$, define
\[ f(c) = (-1)^{|I/(n/c)|} \beta_c(I/(n/c)) \qquad \text{and} \qquad g(c) = (-1)^{|I/(n/c)|} c\,\betacyc_c(I/(n/c)). \]
Then
\begin{align*} g(c) &= (-1)^{|I/(n/c)|}\sum_{\text{$d \mid c$}} \mob(d)\,(-1)^{|I/(n/c)| - |(I/(n/c))/d|} \beta_{c/d}((I/(n/c))/d) \since{by (c)} \\
&= \sum_{\text{$d \mid c$}} \mob(d)\,(-1)^{|I/(dn/c)|} \beta_{c/d}(I/(dn/c)) \since{by Lemma \ref{basiclemma}(a)} \\
&= \sum_{\text{$d \mid c$}} \mob(d)\,f(c/d).
\end{align*}
Therefore, by M\"obius inversion, $f(c) = \sum_{\text{$d \mid c$}} g(d) = \sum_{\text{$d \mid c$}} g(c/d)$. Setting $c = n$ yields
\[ (-1)^{|I|}\,\beta_n(I) = \sum_{\text{$d \mid n$}} (-1)^{|I/d|}\,\frac{n}{d}\,\betacyc_{n/d}(I/d), \]
from which (d) follows.

\smallskip

\noindent\textbf{(d) $\Rightarrow$ (a): }
\begin{align*}
\alpha_n(I) &= \sum_{J \subseteq I} \beta_n(J) = \sum_{J \subseteq I} \sum_{\text{$d\mid n$}} (-1)^{|J|-|J/d|}\,\frac{n}{d}\,\betacyc_{n/d}(J/d) \since{by (d)} \\
&= \sum_{\text{$d\mid n$}} \frac{n}{d} \underbrace{\sum_{J \subseteq I} (-1)^{|J|-|J/d|}\betacyc_{n/d}(J/d)}_{\Phi(d)}.
\end{align*}
We now fix $d \mid n$ and examine the expression $\Phi(d)$ (labeled above). Suppose there is some $i^* \in I$ that is not divisible by $d$. Then $J/d = (J \cup \{i^*\})/d$ for any $J$, so
\begin{align*}
\Phi(d) &= \sum_{\substack{J \subseteq I\\i^* \not\in J}} (-1)^{|J|-|J/d|}\betacyc_{n/d}(J/d) + (-1)^{|J\cup\{i^*\}| - |J/d|}\betacyc_{n/d}(J/d) \\
&= \sum_{\substack{J \subseteq I\\i^* \not\in J}} (-1)^{|J|-|J/d|}\betacyc_{n/d}(J/d) + (-1)^{|J| - |J/d| + 1}\betacyc_{n/d}(J/d) = \sum_{\substack{J \subseteq I\\i^* \not\in J}} 0 = 0.
\end{align*}
Thus, $\Phi(d) = 0$ unless $d$ divides every element of $I$, that is, $d \mid (I,n)$. We get
\begin{align*}
\alpha_n(I) &= \sum_{\text{$d \mid (I,n)$}} \frac{n}{d} \Phi(d) = \sum_{\text{$d \mid (I,n)$}} \frac{n}{d} \sum_{J \subseteq I} (-1)^{|J|-|J/d|}\betacyc_{n/d}(J/d) \\
&= \sum_{\text{$d \mid (I,n)$}} \frac{n}{d} \sum_{J \subseteq I} \betacyc_{n/d}(J/d) 
= \sum_{\text{$d \mid (I,n)$}} \frac{n}{d} \alphacyc_{n/d}(J/d). \qedhere
\end{align*}\end{proof}

Now we are ready to prove our Theorem \ref{maintheorem}. By Theorem \ref{inversions}, it suffices to prove one of (a), (b), (c), or (d). We first present a proof of part~(a).

\begin{proof}[Proof of Theorem \ref{maintheorem}(a).] Let $\mu = \co(I)$, and recall from Lemma~\ref{lem:multi} that $\alpha_n(I)$ equals the number of length-$n$ words with evaluation $\mu$.

Let $m = (I,n)$, and note that $m = \gcd(\mu_1,\mu_2,\ldots)$. In particular, if there is a word $w$ with $\ev(w) = \mu$ and $\per(w) = n/d$, then we must have $d \mid m$. We have
\begin{align*}
\alpha_n(I) = \binom{n}{\mu} &= \sum_{\text{$d \mid m$}} \#\{\text{words $w$ with $|w| = n$ and $\ev(w) = \mu$ and $\per(w) = n/d$}\} \\
&= \sum_{\text{$d \mid m$}} \#\{\text{primitive words $u$ with $|u| = n/d$ and $\ev(u) = \mu/d$}\} \\
&= \sum_{\text{$d \mid m$}} \frac{n}{d} \cdot \#\{\text{Lyndon words $u$ with $|u| = n/d$ and $\ev(u) = \mu/d$}\} \\
&= \sum_{\text{$d \mid m$}} \frac{n}{d}\, a_{n/d,\mu/d}.
\end{align*}
By Lemma \ref{sillylemma} we have $\co(I/d) = \mu/d$, so by Theorem \ref{GRcor}, $a_{n/d,\mu/d} = \alphacyc_{n/d}(I/d)$, from where the result follows.\end{proof}

Next we present an independent proof of part (b), which uses the machinery of symmetric functions introduced in Section \ref{NotationSection}.

\begin{proof}[Proof of Theorem \ref{maintheorem}(b).] Let $\mu = \co(I)$. For a symmetric function $f$, let $[x^\mu] f$ denote the coefficient of $x^\mu$ in $f$. By Theorem \ref{GRcor},
\[ \alphacyc_n(I) = [x^\mu]L_n = \frac{1}{n}\sum_{\text{$d \mid n$}} \mob(d)\, [x^\mu]p_d^{n/d}. \]
Let $m = (I,n) = \gcd(\mu_1,\mu_2,\ldots)$. Since $p_d^{n/d} = {\left(x_1^d + x_2^d + \cdots\right)}^{n/d}$, the expression $p_d^{n/d}$ is a series in $x_1^d, x_2^d, \ldots$, so $[x^\mu]p_d^{n/d}$ is zero if $m$ is not divisible by $d$. On the other hand, if $m$ is divisible by $d$, then we can make the substitution $z_i = x_i^d$, and we obtain
\[ [x^\mu]p_d^{n/d} = [z^{\mu/d}]{\left(z_1 + z_2 + \cdots\right)}^{n/d} = \binom{n/d}{\mu/d}. \]
Therefore,
\[ \alphacyc_n(I) = \frac{1}{n}\sum_{\text{$d\mid m$}} \mob(d)\,\binom{n/d}{\mu/d}. \]
By Lemma \ref{sillylemma}, we have $\co(I/d) = \mu/d$, and so $\binom{n/d}{\mu/d} = \alpha_{n/d}(I/d)$.
\end{proof}

\section{Consequences of the main theorem}\label{sec:consequences}

In this section we study some special cases of Theorem~\ref{maintheorem}, and use them to recover a few results in the literature.


\begin{cor} \label{cor1} Let $I \subseteq [n-1]$.
\begin{itemize}
\item[(a)] If $(I,n) = 1$, then $\alpha_n(I) = n\,\alphacyc_n(I)$. 
\item[(b)] If $\gcd(i,n) = 1$ for all $i \in I$, then $\beta_n(I) = n\,\betacyc_n(I) + (-1)^{|I|}$.
\end{itemize}
\end{cor}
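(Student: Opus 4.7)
The plan is to derive both parts directly from Theorem~\ref{maintheorem} by specializing the appropriate identity to the hypothesis.

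For part (a), I would simply apply Theorem~\ref{maintheorem}(a): since the only positive divisor of $(I,n) = 1$ is $1$ itself, the sum on the right-hand side collapses to the single term $\frac{n}{1}\alphacyc_n(I) = n\,\alphacyc_n(I)$.

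For part (b), the natural tool is Theorem~\ref{maintheorem}(c). First I would record the key consequence of the hypothesis: if $\gcd(i,n) = 1$ for every $i \in I$, then any $d \mid n$ with $d > 1$ divides no element of $I$ (a common divisor of such a $d$ and some $i \in I$ would also divide $\gcd(i,n) = 1$), so $I/d = \emptyset$ for every $d \mid n$ with $d > 1$. Plugging into (c), the $d = 1$ term contributes $\beta_n(I)$, while for each $d \mid n$ with $d > 1$ we have $|I/d| = 0$ and $\beta_{n/d}(\emptyset) = 1$ (the identity is the unique permutation with empty descent set), giving a combined contribution of $(-1)^{|I|} \sum_{d \mid n,\, d > 1} \mob(d)$. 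The classical Möbius identity $\sum_{d \mid n}\mob(d) = 0$ for $n \ge 2$ gives $\sum_{d \mid n,\, d > 1}\mob(d) = -1$, so
\[ n\,\betacyc_n(I) \;=\; \beta_n(I) \;-\; (-1)^{|I|}, \]
which rearranges to the claim.

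There is essentially no hard step; the main thing is to identify the right one of the four formulas in Theorem~\ref{maintheorem} to specialize and to verify that the hypothesis forces $I/d = \emptyset$ for every $d > 1$ dividing $n$. (The degenerate case $n = 1$, where $I = \emptyset$ is forced and the correction term does not arise, is either implicitly excluded or handled separately.)
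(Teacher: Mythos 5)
Your argument is correct. Part (a) is exactly the paper's (one-line) proof. For part (b) you take a mildly different route: you specialize Theorem~\ref{maintheorem}(c) and invoke the classical identity $\sum_{d \mid n} \mob(d) = 0$ for $n \ge 2$ to collapse the $d>1$ terms, whereas the paper specializes Theorem~\ref{maintheorem}(d) and instead uses the observation that $\betacyc_{n/d}(\varnothing) = 0$ unless $d = n$, so that only the terms $d=1$ and $d=n$ survive. Since (c) and (d) are M\"obius inverses of one another (Theorem~\ref{inversions}), the two computations are essentially dual; yours trades the combinatorial fact about cycles with empty descent set for a standard number-theoretic cancellation, and both are equally short. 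One small point in your favor: you explicitly flag that the identity in (b) fails in the degenerate case $n=1$ (where $\beta_1(\varnothing) = 1$ but $n\,\betacyc_1(\varnothing) + (-1)^{0} = 2$), a caveat the paper's proof silently glosses over, since there the terms $d=1$ and $d=n$ coincide and get counted twice.
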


\begin{proof} Part (a) follows immediately from Theorem \ref{maintheorem}(a). 

For part (b), assume $\gcd(i,n) = 1$ for all $i \in S$. Then, for $d \mid n$, we have $I/d = \varnothing$ unless $d = 1$, and $\betacyc_{n/d}(\varnothing) = 0$ unless $d = n$. Thus, by Theorem \ref{maintheorem}(d),
$$ \beta_n(I) = \sum_{\text{$d\mid n$}} (-1)^{|I|-|I/d|}\,\frac{n}{d}\,\betacyc_{n/d}(I/d) = n\,\betacyc_n(I) + (-1)^{|I|}\,\betacyc_1(\varnothing) = n\,\betacyc_n(I) + (-1)^{|I|}. \qedhere$$\end{proof}

Corollary \ref{cor1}(b) shows that, when $I$ meets the given condition, $\betacyc_n(I)$ is very close to $\smallfrac1n \beta_n(I)$. In Section \ref{sec:special}, we will see that a similar phenomenon holds for almost all sets $I$.

As a special case of Theorem \ref{maintheorem}(c), we recover the following result of Elizalde \cite{Elizalde} relating descents sets on $\Cyc_n$ and $\Perm_{n-1}$. Two proofs are presented in~\cite{Elizalde}: one is bijective, and the other uses Theorem~\ref{GRcor} and inclusion-exclusion.

\begin{cor}[{\cite[Cor.\ 4.1]{Elizalde}}] \label{CorSergi} 
For $I \subseteq [n-2]$,
\[ \#\{\pi \in \Cyc_n \colon D(\pi) \cap [n-2] = I\} = \beta_{n-1}(I). \]
Equivalently, $\betacyc_n(I) + \betacyc_n(I \cup \{n-1\}) = \beta_{n-1}(I)$.
\end{cor}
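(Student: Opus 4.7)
The plan is to derive the identity $\betacyc_n(I) + \betacyc_n(I \cup \{n-1\}) = \beta_{n-1}(I)$ directly from Theorem~\ref{maintheorem}(c) by applying it to $I$ and to $J := I \cup \{n-1\}$ and summing. The key structural observation is that for any divisor $d \mid n$ with $d > 1$, we have $d \nmid n-1$ (otherwise $d$ would divide $\gcd(n,n-1) = 1$), so $J/d = I/d$. In particular the factor $\beta_{n/d}(J/d)$ agrees with $\beta_{n/d}(I/d)$ for every $d>1$.

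Next I would track the sign. Since $|J| = |I| + 1$ but $|J/d| = |I/d|$ for $d>1$, the corresponding signs $(-1)^{|I|-|I/d|}$ and $(-1)^{|J|-|J/d|}$ differ by $-1$, so these contributions to the sum cancel in pairs. Only the $d=1$ term survives, giving
\[
\betacyc_n(I) + \betacyc_n(I \cup \{n-1\}) = \frac{1}{n}\bigl(\beta_n(I) + \beta_n(I \cup \{n-1\})\bigr).
\]

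The final step is to recognize that $\beta_n(I) + \beta_n(I \cup \{n-1\})$ counts permutations $\pi \in \Perm_n$ with $D(\pi) \cap [n-2] = I$ (since the membership of $n-1$ in $D(\pi)$ is the only thing being summed out), and to identify this count with $n\,\beta_{n-1}(I)$. This comes from the standard bijection $\Perm_n \leftrightarrow \Perm_{n-1} \times [n]$ sending $\pi$ to $(\tilde\pi, \pi(n))$, where $\tilde\pi$ is the standardization of $\pi(1)\pi(2)\cdots\pi(n-1)$. Standardization preserves relative order, hence $D(\tilde\pi) = D(\pi) \cap [n-2]$, so each $\tilde\pi \in \Perm_{n-1}$ with $D(\tilde\pi) = I$ arises from exactly $n$ permutations $\pi \in \Perm_n$ with $D(\pi) \cap [n-2] = I$. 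Plugging in yields $\beta_{n-1}(I)$, as desired.

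The only real obstacle is spotting that part~(c) of the main theorem is already tailor-made for this telescoping: the element $n-1$ interacts with every nontrivial divisor of $n$ in the cleanest possible way, and the flipped sign in the formula is exactly what makes the nontrivial terms cancel. Once that is noticed, the remainder is the one-line standardization bijection; no new machinery is required.
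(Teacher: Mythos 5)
Your proposal is correct and follows essentially the same route as the paper: apply Theorem~\ref{maintheorem}(c) to $I$ and $I\cup\{n-1\}$, observe that $I/d=(I\cup\{n-1\})/d$ for every divisor $d>1$ of $n$ so the opposite signs cancel all nontrivial terms, and then identify $\beta_n(I)+\beta_n(I\cup\{n-1\})$ with $n\,\beta_{n-1}(I)$ via the standardization bijection (the paper states the same bijection in the reverse direction, building $\pi$ from $\sigma\in\Perm_{n-1}$ and a choice of $\pi(n)$). No gaps.
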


\begin{proof} Write $I^+ = I \cup \{n-1\}$. For $d \mid n$, observe that $I/d = I^+/d$ unless $d = 1$. Then, by Theorem \ref{maintheorem}(c),
\begin{align*}
&\betacyc_n(I) + \betacyc_n(I^+) \\
&= \frac{1}{n} \sum_{\text{$d \mid n$}} \mob(d)\,(-1)^{|I|-|I/d|}\,\beta_{n/d}(I/d) + \frac{1}{n} \sum_{\text{$d \mid n$}} \mob(d)\,(-1)^{|I^+|-|I^+/d|}\,\beta_{n/d}(I^+/d) \\
&= \frac{1}{n}\left[\beta_n(I) + \beta_n(I^+) + \sum_{\substack{\text{$d \mid n$}\\d \not= 1}} \mob(d)\left[(-1)^{|I|-|I/d|} + (-1)^{|I|+1-|I/d|}\right] \beta_{n/d}(I/d) \right] \\
&= \frac{1}{n}\left[\beta_n(I) + \beta_n(I^+)\right]=\beta_{n-1}(I).
\end{align*}
The last equality is proved noting that a permutation $\pi \in \Perm_n$ with descent set $I$ or $I \cup\{n-1\}$ can be obtained as follows: for the first $n-1$ entries, select a permutation $\sigma\in\Perm_{n-1}$ with descent set $I$, then select one of $n$ possible values for $\pi(n)$ to append to $\sigma$, and finally increase by one the entries of $\sigma$ greater than or equal to $\pi(n)$. \end{proof}


The next proposition follows easily from the generating function identity in \cite[Cor.\ 6.2]{GR}, simply by setting $q = 1$ and extracting the coefficient of $t^k$. Here we give an alternate proof using our main theorem.

\begin{prop} \label{desnum} Let $A(n,k)$ denote the Eulerian number, \textit{i.e.}\ the number of $\pi \in \Perm_n$ with exactly $k-1$ descents; and let $C(n,k)$ be the number of cycles $\pi \in \Cyc_n$ with exactly $k-1$ descents. Then
\[ C(n,k) = \frac{1}{n} \sum_{\text{$d \mid n$}} \sum_{j=0}^k \mob(d)\,(-1)^{k-j} \binom{n - \frac{n}{d}}{k-j}\,A(n/d,j). \]
\end{prop}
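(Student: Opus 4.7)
The plan is to express $C(n,k)$ as a sum of $\betacyc_n(I)$ over all $I \subseteq [n-1]$ with $|I|=k-1$, apply Theorem~\ref{maintheorem}(c) to each summand, and then interchange the order of summation. Explicitly,
\[ C(n,k) = \sum_{\substack{I\subseteq[n-1]\\|I|=k-1}} \betacyc_n(I) = \frac{1}{n}\sum_{d\mid n}\mob(d)\sum_{\substack{I\subseteq[n-1]\\|I|=k-1}}(-1)^{|I|-|I/d|}\,\beta_{n/d}(I/d). \]

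For the inner sum, with $d \mid n$ fixed, I would uniquely decompose each $I$ of size $k-1$ as a disjoint union $I = (I \cap d\ZZ) \sqcup I''$, where $I'' = I \setminus d\ZZ$. The set $[n-1] \setminus d\ZZ$ has $(n-1) - (n/d - 1) = n - n/d$ elements, so $I''$ ranges freely over its subsets, while $I/d = (I \cap d\ZZ)/d$ ranges freely over subsets of $[n/d-1]$. The key observation is that $|I| - |I/d| = |I''|$, so the sign depends only on $|I''|$. Letting $j = |I/d|$, the contribution from summing over all admissible $I''$ of the complementary size is $\binom{n-n/d}{k-1-j}$, and the remaining sum $\sum_{I' \subseteq [n/d-1],\,|I'|=j}\beta_{n/d}(I')$ equals $A(n/d,j+1)$ by definition of the Eulerian numbers. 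The inner sum therefore reduces to
\[ \sum_{j=0}^{k-1}(-1)^{k-1-j}\binom{n-n/d}{k-1-j}\,A(n/d,j+1). \]

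Finally, I would reindex via $j \mapsto j-1$, turning this into $\sum_{j=1}^{k}(-1)^{k-j}\binom{n-n/d}{k-j}\,A(n/d,j)$; since $A(n/d,0) = 0$, the sum can be extended to start at $j=0$, yielding the stated formula. The only slightly subtle step is the parameterization of $I$ by the pair $(I/d,\, I \setminus d\ZZ)$, together with the observation that $|I| - |I/d|$ counts precisely the elements of $I$ not divisible by $d$; beyond that, the argument is routine bookkeeping, and I do not anticipate any real obstacle.
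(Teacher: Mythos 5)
Your proposal is correct and follows essentially the same route as the paper: both start from $C(n,k)=\sum_{|I|=k-1}\betacyc_n(I)$, apply Theorem~\ref{maintheorem}(c), and then group the sets $I$ according to $J=I/d$, with the count $\binom{n-n/d}{k-1-|J|}$ of completions by elements not divisible by $d$ playing the same role in each argument. The reindexing and the use of $A(n/d,0)=0$ to extend the range of $j$ are handled correctly.
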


\begin{proof} Using Theorem \ref{maintheorem}(c),
$$ C(n,k) = \sum_{\substack{I \subseteq [n-1]\\|I| = k-1}} \betacyc_n(I) = \sum_{\substack{I \subseteq [n-1]\\|I| = k-1}}\frac{1}{n} \sum_{\text{$d\mid n$}} \mob(d)\,(-1)^{k-1-|I/d|} \beta_{n/d}(I/d).$$
Setting $J=I/d$, and summing first over $J$ and then over $I$ with $I/d = J$,
\begin{align*}
C(n,k) &= \frac{1}{n}\sum_{\text{$d\mid n$}} \sum_{J \subseteq [n/d-1]} \sum_{\substack{I \subseteq [n-1]\\|I| = k-1\\ I/d = J}} \mob(d)\,(-1)^{k-1-|J|} \beta_{n/d}(J) \\
&= \frac{1}{n}\sum_{\text{$d\mid n$}} \sum_{J \subseteq [n/d-1]} \mob(d)\,(-1)^{k-1-|J|} \beta_{n/d}(J) \cdot \#\left\{\footnotesize\begin{array}{c}I \subseteq [n-1],\\|I| = k-1,\, I/d = J\end{array}\right\} \\
&= \frac{1}{n}\sum_{\text{$d\mid n$}} \sum_{J \subseteq [n/d-1]} \mob(d)\,(-1)^{k-1-|J|} \beta_{n/d}(J) \,\binom{n-\frac{n}{d}}{k-1-|J|} \\
&= \frac{1}{n}\sum_{\text{$d\mid n$}} \sum_{j\ge1} \sum_{\substack{J \subseteq [n/d-1]\\|J| = j-1}} \mob(d)\,(-1)^{k-j} \binom{n-\frac{n}{d}}{k-j} \beta_{n/d}(J) \\
&= \frac{1}{n}\sum_{\text{$d\mid n$}} \sum_{j\ge1} \mob(d)\,(-1)^{k-j} \binom{n-\frac{n}{d}}{k-j} \sum_{\substack{J \subseteq [n/d-1]\\|J| = j-1}} \beta_{n/d}(J) \\
&= \frac{1}{n}\sum_{\text{$d\mid n$}} \sum_{j\ge1} \mob(d)\,(-1)^{k-j} \binom{n-\frac{n}{d}}{k-j}\,A(n/d,j). \qedhere
\end{align*}\end{proof}

\subsection{Complements of descent sets}

For $I \subseteq [n-1]$, let $\overline{I}$ denote the complement of $I$, namely $\overline{I} = [n-1] \smallsetminus I$. It follows from \cite[Thm.\ 4.1]{GR} that $\betacyc_n(\overline{I}) = \betacyc_n(I)$ when $n \not\equiv 2\bmod 4$. The following proposition is a full description of the case where $n \equiv 2\bmod 4$. Recall that $I/2 = \{i/2 \colon \text{$i \in I$ and $i$ is even}\}$. Note that, for $n \equiv 2\bmod 4$, exactly one of $I$ or $\overline{I}$ has an odd number of odd elements.

\begin{prop} \label{complement} If $n \equiv 2\bmod 4$ and $I \subseteq [n-1]$ has an odd number of odd elements, then $\betacyc_n(I) - \betacyc_n(\overline{I}) = \betacyc_{n/2}(I/2)$.
\end{prop}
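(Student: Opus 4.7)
The plan is to apply Theorem~\ref{maintheorem}(c) to both $\betacyc_n(I)$ and $\betacyc_n(\overline{I})$, subtract, and show the result collapses to $\betacyc_{n/2}(I/2)$. Two preliminary facts will drive the computation: first, for every $d \mid n$, the set $\overline{I}/d$ equals the complement of $I/d$ inside $[n/d - 1]$ (the multiples of $d$ in $[n-1]$ biject with $[n/d-1]$ via $i \mapsto i/d$, partitioning accordingly), so $|\overline{I}/d| = n/d - 1 - |I/d|$; second, the value-complement bijection $\pi(i) \mapsto n+1-\pi(i)$ shows $\beta_k(\overline{L}) = \beta_k(L)$ for every $k$ and $L \subseteq [k-1]$.

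Plugging these into Theorem~\ref{maintheorem}(c) and using $|\overline{I}| = n - 1 - |I|$, the exponent of $-1$ in the formula for $\betacyc_n(\overline{I})$ becomes $(n-n/d)-(|I|-|I/d|)$, which has the same parity as $(n-n/d)+(|I|-|I/d|)$. Subtracting yields
\[
\betacyc_n(I) - \betacyc_n(\overline{I}) = \frac{1}{n}\sum_{d \mid n} \mob(d)\bigl[1 - (-1)^{n - n/d}\bigr](-1)^{|I|-|I/d|}\beta_{n/d}(I/d).
\]
The bracket vanishes unless $n$ and $n/d$ have opposite parities. Since $n \equiv 2 \bmod 4$, writing $n = 2m$ with $m$ odd, this forces $d$ to be even, and the even divisors of $n$ are exactly $d = 2e$ with $e \mid m$. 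Since $\gcd(2,e)=1$, we have $\mob(2e) = -\mob(e)$; by Lemma~\ref{basiclemma}(a), $I/(2e) = (I/2)/e$; and $n/(2e) = m/e$. Setting $J = I/2$, the sum becomes
\[
\betacyc_n(I) - \betacyc_n(\overline{I}) = -\frac{1}{m}(-1)^{|I|-|J|}\sum_{e \mid m} \mob(e)(-1)^{|J|-|J/e|}\beta_{m/e}(J/e) = -(-1)^{|I|-|J|}\,\betacyc_{n/2}(I/2),
\]
where the final equality is another application of Theorem~\ref{maintheorem}(c) (now for $\betacyc_m(J)$).

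To finish, I would observe that $|I| - |J| = |I| - |I/2|$ counts precisely the odd elements of $I$, which by hypothesis is odd, so $-(-1)^{|I|-|J|} = +1$ and the claimed identity follows. The main thing to be careful about is the sign bookkeeping: one has to track the parity of $n - n/d$, the factorization of $\mob(2e)$, and the shift from $|I|-|I/d|$ to $|J|-|J/e|$ simultaneously, and the parity hypothesis on $I$ enters at the very last step precisely to cancel the leftover sign. Beyond these bookkeeping subtleties the computation is routine, since Theorem~\ref{maintheorem}(c) does all the structural work.
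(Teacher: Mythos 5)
Your proposal is correct and follows essentially the same route as the paper's own proof: apply Theorem~\ref{maintheorem}(c) to $I$ and $\overline{I}$, subtract using $\beta_{n/d}(\overline{I}/d)=\beta_{n/d}(I/d)$ and $|\overline{I}/d|=n/d-1-|I/d|$, observe that the factor $1-(-1)^{n-n/d}$ survives only for even $d$, re-index $d=2e$ with $\mob(2e)=-\mob(e)$, and use the parity hypothesis on the odd elements of $I$ to cancel the residual sign and recognize the Theorem~\ref{maintheorem}(c) expansion of $\betacyc_{n/2}(I/2)$. All the sign bookkeeping in your write-up checks out.
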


\begin{proof}
By applying the complementation operation to permutations, $\beta_k(\overline{J}) = \beta_k(J)$. We have $\overline{I}/d = \overline{I/d}$, so $|\overline{I}/d| = n/d - 1 - |I/d|$ and $\beta_{n/d}(\overline{I}/d) = \beta_{n/d}(I/d)$. By Theorem \ref{maintheorem}(c),
\begin{align*}
\betacyc_n(I) - \betacyc_n(\overline{I}) &= \frac{1}{n} \sum_{\text{$d \mid n$}} \mob(d) \left[ (-1)^{|I| - |I/d|} \beta_{n/d}(I/d) - (-1)^{|\overline{I}| - |\overline{I}/d|} \beta_{n/d}(\overline{I}/d) \right] \\
&= \frac{1}{n} \sum_{\text{$d \mid n$}} \mob(d) \left[ (-1)^{|I| - |I/d|} - (-1)^{n - n/d - |I| + |I/d|} \right] \beta_{n/d}(I/d) \\
&= \frac{1}{n} \sum_{\text{$d \mid n$}} \mob(d)\, (-1)^{|I| - |I/d|} \left[ 1 - (-1)^{n - n/d} \right] \beta_{n/d}(I/d).
\end{align*}
Since $n \equiv 2\bmod 4$, the term $1 - (-1)^{n - n/d}$ is non-zero precisely when $d$ is even, and in that case $1 - (-1)^{n - n/d} = 2$. Thus, the sum above becomes
\begin{align*}
\frac{2}{n} \sum_{\substack{\text{$d \mid n$} \\ \text{$d$ even}}} \mob(d) \,(-1)^{|I| - |I/d|} \beta_{n/d}(I/d), \end{align*}
and we can re-index by setting $d = 2e$:
\begin{align*}
& \frac{1}{n/2} \sum_{\text{$e \mid n/2$}} \mob(2e) \,(-1)^{|I| - |I/(2e)|} \beta_{n/(2e)}(I/(2e)) \\
&= \frac{1}{n/2} \sum_{\text{$e \mid n/2$}} -\mob(e) \,(-1)^{|I| - |(I/2)/e|} \beta_{(n/2)/e}((I/2)/e). \since{by Lemma 1.1 (a)}
\end{align*}
Since $|I/2|$ is the number of even elements in $I$ and $I$ has an odd number of odd elements by assumption, $I/2$ and $I$ do not have the same parity. Thus, $-(-1)^{|I|} = (-1)^{|I/2|}$, and the sum becomes
\[ \frac{1}{n/2} \sum_{\text{$e \mid n/2$}} \mob(e) \,(-1)^{|I/2| - |(I/2)/e|} \beta_{(n/2)/e}((I/2)/e), \]
which is $\betacyc_{n/2}(I/2)$ by Theorem \ref{maintheorem}(c).
\end{proof}

The next result now follows immediately.

\begin{cor} \label{complementcor} Let $n \equiv 2 \bmod 4$, and $I \subseteq [n-1]$. We have
\begin{enumerate}[(a)]
\item $\betacyc_n(I) \ge \betacyc_n(\overline{I})$ if $I$ has an odd number of odd elements;
\item $\betacyc_n(I) = \betacyc_n(\overline{I})$ if and only if one of $I$ and $\overline{I}$ has no even elements. 
\hfill $\square$
\end{enumerate}
\end{cor}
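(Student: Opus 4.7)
The plan is to deduce both parts directly from Proposition~\ref{complement}. Because $n \equiv 2 \bmod 4$, the set $[n-1]$ contains $n/2$ odd numbers, and $n/2$ is itself odd; hence exactly one of $I$ and $\overline{I}$ has an odd number of odd elements. Since both assertions are symmetric under $I \leftrightarrow \overline{I}$, I may assume it is $I$ that has this parity. Proposition~\ref{complement} then gives
\[
\betacyc_n(I) - \betacyc_n(\overline{I}) \;=\; \betacyc_{n/2}(I/2),
\]
and part~(a) is immediate since the right-hand side is a count of permutations and is therefore nonnegative.

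For part~(b), equality is equivalent to $\betacyc_{n/2}(I/2) = 0$. Writing $m = n/2$ and $J = I/2 \subseteq [m-1]$, I would first observe that $J = \emptyset$ is equivalent to $I$ having no even element, while $J = [m-1]$ is equivalent to $I$ containing every even element of $[n-1]$, i.e., to $\overline{I}$ having no even element. For the ``if'' direction it then suffices to check that $\betacyc_m(\emptyset) = 0$ (the identity is the unique permutation with empty descent set and is not an $m$-cycle for $m \ge 2$) and $\betacyc_m([m-1]) = 0$ for odd $m \ge 3$ (the reversal $m(m-1)\cdots 1$ decomposes as $(m-1)/2$ transpositions together with a fixed point, hence is not an $m$-cycle).

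The main obstacle is the ``only if'' direction: one must show $\betacyc_m(J) > 0$ for every $J \subseteq [m-1]$ with $\emptyset \neq J \neq [m-1]$ and $m \ge 3$. I would handle this by exhibiting, for each such $J$, a Lyndon word of length $m$ and evaluation $\co(J)$ whose associated $m$-cycle (via Theorem~\ref{GRcor} combined with M\"obius inversion) has descent set exactly $J$; alternatively, this positivity can be invoked from the Gessel--Reutenauer framework, where it is a standard consequence of the abundance of Lyndon words with a prescribed non-trivial descent pattern. Once this non-vanishing is in hand, part~(b) follows. (The case $n = 2$, where $m = 1$ and $\betacyc_1(\emptyset) = 1$, lies outside this analysis and is a genuine exception to the stated equivalence; the corollary is presumably intended for $n \ge 6$.)
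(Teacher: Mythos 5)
Your reduction is the same one the paper intends: both parts are read off from Proposition~\ref{complement}, and your handling of part (a), of the translation between ``$I$ (resp.\ $\overline{I}$) has no even elements'' and ``$I/2=\varnothing$ (resp.\ $I/2=[n/2-1]$)'', and of the ``if'' half of (b) (via $\betacyc_m(\varnothing)=0$ for $m\ge2$ and $\betacyc_m([m-1])=0$ for odd $m\ge3$) is correct. Your observation that $n=2$ is a genuine exception to (b) as stated is also correct: there $I/2=\varnothing$ but $\betacyc_1(\varnothing)=1$, so $\betacyc_2(\{1\})\ne\betacyc_2(\varnothing)$ even though both sets have no even elements.

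The gap is exactly where you flag it: the ``only if'' half of (b) needs $\betacyc_m(J)>0$ for every $J$ with $\varnothing\ne J\ne[m-1]$, where $m=n/2$ is odd and $m\ge3$, and neither of your two suggested routes delivers this. The first does not work as described: Theorem~\ref{GRcor} counts cycles with $D(\pi)\subseteq I$, i.e.\ it controls $\alphacyc$, not $\betacyc$, so a Lyndon word with evaluation $\co(J)$ only produces a cycle whose descent set is \emph{contained in} $J$. It does settle the case $\card{J}=1$, since $\betacyc_m(\{j\})=\alphacyc_m(\{j\})-\betacyc_m(\varnothing)=\alphacyc_m(\{j\})>0$, but for larger $J$ inclusion--exclusion gives an alternating sum of Lyndon-word counts whose positivity is not obvious (for instance $\betacyc_5(\{1,2\})=4-1-2=1$, so there is no slack for crude estimates, and Lemma~\ref{bound2} is far too weak when $\beta_m(J)$ is small). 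The second route is an appeal to an unnamed ``standard consequence'' with no citation or argument. To be fair, the paper itself offers no proof either---the corollary is declared immediate after Proposition~\ref{complement}---so the same positivity statement is being taken for granted there as well; but as a self-contained argument your write-up leaves the decisive step unestablished, and filling it requires either an explicit construction of an $m$-cycle with each prescribed proper nonempty descent set or a citable nonvanishing result from the Gessel--Reutenauer literature.
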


\subsection{Cycles with descent set $\{k, 2k, 3k, \ldots\}$} \label{sec:Stanley}

In this section we prove a result of Stanley as a special case of our main theorem, then use our main theorem to obtain a generalized version of Stanley's result.

Let $E_n$ denote the $n$th Euler number, \textit{i.e.}\ $E_n = \beta_n(2\mathbb{Z} \cap [n-1])$, the number of alternating (up--down) permutations in $\Perm_n$.

\begin{cor}[{\cite[Thm.\ 5.3]{StanleyAlt}}]\label{alternating} The number of alternating cycles in $\Cyc_n$ is
$$\betacyc_n(2\mathbb{Z} \cap [n-1])=\begin{cases}
\frac{1}{n}\sum_{\text{$d \mid n$}} \mob(d)\,(-1)^{(d-1)/2}\,E_{n/d}, & \text{if $n$ is odd;}\\
\frac{1}{n}\sum_{\substack{\text{$d \mid n$}\\\text{$d$ odd}}} \mob(d)\,E_{n/d}, & \text{if $n$ is even but not a power of 2;}\\
\frac{1}{n}\left(E_n - 1\right), & \text{if $n \ge 2$ is a power of $2$.}
\end{cases}$$
\end{cor}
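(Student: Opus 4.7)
The plan is to apply Theorem~\ref{maintheorem}(c) directly with $I = 2\mathbb{Z} \cap [n-1]$ and simplify the resulting sum according to the parity of $d$. First I would give a parity-based description of $I/d$: for $d$ odd, $d \mid i$ together with $i$ even forces $i/d$ to be even, so $I/d = 2\mathbb{Z} \cap [n/d - 1]$, and hence $\beta_{n/d}(I/d) = E_{n/d}$ by the definition of the Euler numbers; for $d$ even, every multiple of $d$ in $[n-1]$ is automatically even, so $I/d = [n/d - 1]$ (or $\emptyset$ when $d = n$), and $\beta_{n/d}(I/d) = 1$ since that descent set forces the strictly decreasing permutation.

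Next I would analyze the sign $(-1)^{|I|-|I/d|}$. Using $|I| = \lfloor(n-1)/2\rfloor$ together with the explicit description of $|I/d|$, short parity checks yield: (i) for $n$ odd and $d$ odd, $(-1)^{|I|-|I/d|} = (-1)^{(d-1)/2}$, using that $n/d$ is then odd; (ii) for $n$ even and $d$ odd, $(-1)^{|I|-|I/d|} = +1$; (iii) for $d$ even and square-free (so $\mob(d) \neq 0$ forces $d/2$ odd), $(-1)^{|I|-|I/d|} = +1$.

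With these ingredients in hand, two of the three cases fall out immediately. For $n$ odd, every divisor is odd and Theorem~\ref{maintheorem}(c) yields the stated formula on the nose. For $n = 2^a$ a power of $2$, the only $d \mid n$ with $\mob(d) \neq 0$ are $d = 1$ and $d = 2$, contributing $E_n$ and $-1$ respectively, giving $\frac{1}{n}(E_n - 1)$. The main obstacle, and the only step requiring real work, is the intermediate case $n = 2^a m$ with $m > 1$ odd, where I must show that the even-$d$ contribution vanishes. Re-indexing the even square-free divisors as $d = 2e$ with $e$ an odd square-free divisor of $m$, and using $\mob(2e) = -\mob(e)$, the even-$d$ sum collapses to $-\sum_{e \mid m} \mob(e)$, which equals $0$ by the standard M\"obius identity since $m > 1$. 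The remaining sum over odd $d \mid n$ is exactly the second formula, after a quick sanity check that the edge case $d = n$ (possible only when $a = 1$ and $e = m$, where $I/d = \emptyset$ rather than $[n/d - 1]$) is consistent with the uniform value $\beta_{n/d}(I/d) = 1$.
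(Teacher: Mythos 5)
Your proposal is correct and follows essentially the same route as the paper: apply Theorem~\ref{maintheorem}(c), identify $I/d$ and $\beta_{n/d}(I/d)$ according to the parity of $d$, verify the signs $(-1)^{|I|-|I/d|}$ by the same parity computations, and kill the even-divisor contribution in the intermediate case by writing $d=2e$ and invoking $\sum_{e\mid m}\mob(e)=0$ for $m>1$. All the sign checks and the edge cases you flag (including $d=n$) match the paper's computation.
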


\begin{proof} Let $I = 2\mathbb{Z} \cap [n-1]$. In order to apply Theorem \ref{maintheorem}(c), we examine the quantities $|I/d|$ and $\beta_{n/d}(I/d)$ for $d\mid n$. If $d$ is odd, then $I/d = 2\mathbb{Z} \cap \left[\smallfrac{n}{d}-1\right]$, so $|I/d| = \left\lfloor\smallfrac{n/d-1}{2}\right\rfloor = \left\lfloor\smallfrac{n-d}{2d}\right\rfloor$, and $\beta_{n/d}(I/d) = E_{n/d}$. If $d$ is even, then $I/d = \left[\smallfrac{n}{d}-1\right]$, so $|I/d| = \smallfrac{n}{d}-1$, and $\beta_{n/d}(I/d) = 1$. 

Splitting the sum in Theorem \ref{maintheorem}(c) between odd divisors and even divisors, we obtain
\begin{align*}
\betacyc_n(I) &= \frac{1}{n} \sum_{\substack{\text{$d\mid n$}\\\text{$d$ odd}}} \mob(d)\,(-1)^{\left\lfloor \frac{n-1}{2}\right\rfloor - \left\lfloor \frac{n-d}{2d}\right\rfloor}\,E_{n/d} \;+\; \frac{1}{n}\sum_{\substack{\text{$d\mid n$}\\\text{$d$ even}}} \mob(d)\,(-1)^{\left\lfloor\frac{n-1}{2}\right\rfloor-\frac{n}{d}+1} \\
&= \frac{1}{n} \underbrace{\sum_{\substack{\text{$d\mid n$}\\\text{$d$ odd}}} \mob(d)\,\left((-1)^{n/d}\right)^{(d-1)/2}\,E_{n/d}}_{\Psi_\text{odd}} \;+\; \frac{1}{n}\underbrace{\sum_{\substack{\text{$d\mid n$}\\\text{$d$ even}}} \mob(d)\,(-1)^{\left\lfloor\frac{n-1}{2}\right\rfloor-\frac{n}{d}+1}}_{\Psi_\text{even}}.
\end{align*}

Let $\Psi_\text{odd}$ and $\Psi_\text{even}$ be as indicated above. We prove each part of the corollary as follows.
\begin{itemize}
\item Assume $n$ is odd. Then $\Psi_\text{even} = 0$. Since $n/d$ is odd, we get $(-1)^{n/d} = -1$, and so $\Psi_\text{odd} = \sum_{\substack{\text{$d\mid n$}\\\text{$d$ odd}}} \mob(d)\,(-1)^{(d-1)/2}\,E_{n/d}$.

\item Assume $n$ is even but not a power of 2, and write $n = 2^k m$ with $m > 1$ odd. Then, for odd $d \mid n$, $n/d$ is even, so $(-1)^{n/d} = 1$, and $\Psi_\text{odd} = \sum_{\substack{\text{$d\mid n$}\\\text{$d$ odd}}} \mob(d)\,E_{n/d}$. 

Now it suffices to show that $\Psi_\text{even} = 0$.
Observe that $\mob(d) = 0$ unless $d$ is square-free. If $d \mid n$ is even and square-free, then we can write $d = 2e$ with $e \mid m$, and $\mob(2e) = -\mob(e)$. Thus,
$$\Psi_\text{even} = -\sum_{\text{$e \mid m$}} \mob(e)\,(-1)^{\frac{n}{2} - \frac{n}{2e}} = -\sum_{\text{$e \mid m$}} \mob(e)\,\left((-1)^{n/e}\right)^{(e-1)/2} = -\sum_{\text{$e \mid m$}} \mob(e),$$
which is $0$ because $m > 1$.

\item Assume $n \ge 2$ is a power of 2. Then the only odd divisor of $n$ is $1$, so $\Psi_\text{odd} = E_n$. And the only even \textit{square-free} divisor of $n$ is $2$, so $\Psi_\text{even} = -1$. \qedhere
\end{itemize}\end{proof}

A very similar computation proves the analogous result for down--up permutations, \textit{i.e.}\ those with descent set $[n-1] \cap \{1, 3, 5, \ldots\}$. This result also appears in \cite[Thm.\ 5.3]{StanleyAlt}.

The above proof of Corollary~\ref{alternating} is similar to Stanley's proof of \cite[Thm.\ 5.3]{StanleyAlt}. 
His proof uses the Gessel and Reutenauer's machinery  \cite[Thm.\ 2.1]{GR} to derive a symmetric-function identity in the special case of alternating permutations \cite[Thm.\ 2.3]{StanleyAlt}, from which \cite[Thm.\ 5.3]{StanleyAlt} is then deduced.


We can extend Corollary~\ref{alternating} from alternating permutations to permutations with descent set $\{k, 2k, 3k, \ldots\}$ for any $k$. Let $E_n^{(k)}$ denote the generalized Euler numbers, \textit{i.e.}\ $E_n^{(k)} = \beta_n(k\mathbb{Z} \cap [n-1])$, which is the number of permutations with descent set $k\mathbb{Z} \cap [n-1]$.

\begin{thm} \label{generalized}
The number of cycles with descent set $k\mathbb{Z} \cap [n-1]$ is given by
\[ \betacyc_n(k\mathbb{Z} \cap [n-1]) = \frac{1}{n}\sum_{\text{$d \mid n$}} \mob(d) \, (-1)^{\left\lfloor\frac{n-1}{k}\right\rfloor - \left\lfloor\frac{n-d}{\lcm(k,d)}\right\rfloor} \, E_{n/d}^{\left(\frac{k}{\gcd(k,d)}\right)}. \]
\end{thm}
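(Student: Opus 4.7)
The plan is to apply Theorem~\ref{maintheorem}(c) directly to $I = k\mathbb{Z} \cap [n-1]$ and then identify each ingredient ($|I|$, $|I/d|$, and $\beta_{n/d}(I/d)$) in the sum. This mirrors the proof of Corollary~\ref{alternating}, which is just the case $k=2$, but now without the luxury of being able to split cleanly into ``odd $d$'' and ``even $d$'' based on whether multiples of $k$ survive the division by $d$.

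First, I would record the easy quantity $|I| = \lfloor (n-1)/k \rfloor$. The main computation is describing $I/d$ for $d \mid n$. An element $i \in I$ contributes $i/d$ to $I/d$ precisely when $d \mid i$; combined with $k \mid i$, this means $\lcm(k,d) \mid i$. Writing $i = \lcm(k,d)\cdot j$, the quotient is $i/d = (\lcm(k,d)/d)\cdot j = (k/\gcd(k,d))\cdot j$. Hence
\[ I/d = \frac{k}{\gcd(k,d)}\,\mathbb{Z} \cap [n/d-1], \]
so by definition $\beta_{n/d}(I/d) = E_{n/d}^{(k/\gcd(k,d))}$.

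Next, I would count $|I/d|$. The number of positive multiples of $\lcm(k,d)$ that are at most $n-1$ is $\lfloor (n-1)/\lcm(k,d)\rfloor$, but because $d \mid n$ and $d \mid \lcm(k,d)$, any such multiple $i$ satisfies $d \mid n - i$, forcing $i \le n-d$. Therefore $|I/d| = \lfloor (n-d)/\lcm(k,d)\rfloor$, which matches the exponent appearing in the statement. (This small bookkeeping step, verifying $\lfloor(n-1)/\lcm(k,d)\rfloor = \lfloor(n-d)/\lcm(k,d)\rfloor$ under $d\mid n$, is the only spot that requires any care; there is no genuine obstacle.)

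Finally, I would plug these three pieces into Theorem~\ref{maintheorem}(c):
\[ \betacyc_n(k\mathbb{Z}\cap[n-1]) \;=\; \frac{1}{n}\sum_{d\mid n} \mob(d)\,(-1)^{|I|-|I/d|}\,\beta_{n/d}(I/d), \]
and the right-hand side becomes the claimed expression. Unlike the $k=2$ case, one should not expect further simplification of the $\mob(d)$ sum in general, so I would leave the formula in this form and remark that Corollary~\ref{alternating} is recovered by specializing $k=2$ and collecting the odd-$d$ and even-$d$ contributions as done there.
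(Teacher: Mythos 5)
Your proposal is correct and follows essentially the same route as the paper: apply Theorem~\ref{maintheorem}(c) to $I = k\mathbb{Z}\cap[n-1]$, identify $I/d = \frac{k}{\gcd(k,d)}\mathbb{Z}\cap[n/d-1]$ so that $\beta_{n/d}(I/d) = E_{n/d}^{(k/\gcd(k,d))}$, and compute $|I/d| = \lfloor(n-d)/\lcm(k,d)\rfloor$. The only cosmetic difference is that you obtain the last equality by noting that multiples of $\lcm(k,d)$ below $n$ are in fact at most $n-d$, while the paper rewrites $\lfloor(n/d-1)/(k/\gcd(k,d))\rfloor$ directly; both are the same bookkeeping.
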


\begin{proof} Let $I = k\mathbb{Z} \cap [n-1]$. Then $|I| = \left\lfloor\smallfrac{n-1}{k}\right\rfloor$ and $I/d = \smallfrac{k}{\gcd(k,d)}\mathbb{Z}\, \cap \left[\smallfrac{n}{d}-1\right]$, so $\beta_{n/d}(I/d) = E_{n/d}^{\left(\frac{k}{\gcd(k,d)}\right)}$ and
\[ |I/d| = \left\lfloor \frac{n/d-1}{k/\gcd(k,d)}\right\rfloor = \left\lfloor \frac{n-d}{kd/\gcd(k,d)}\right\rfloor = \left\lfloor \frac{n-d}{\lcm(k,d)}\right\rfloor. \]
The desired expression follows from substituting these numbers into Theorem \ref{maintheorem}(c). \end{proof}

\begin{cor} \label{gencor1}
\renewcommand{\qedsymbol}{$\square$}\begin{proof}[\nopunct\!\!\!]If $\gcd(k,n) = 1$, then
\begin{align*} \betacyc_n(k\mathbb{Z} \cap [n-1]) &= \frac{1}{n}\sum_{\text{$d\mid n$}} \mob(d)\,(-1)^{\left\lfloor\frac{n-1}{k}\right\rfloor - \left\lfloor\frac{n-d}{k d}\right\rfloor} \, E_{n/d}^{(k)}.\qedhere\end{align*}\end{proof}
\end{cor}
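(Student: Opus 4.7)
The plan is to derive this directly as a specialization of Theorem~\ref{generalized}. Under the hypothesis $\gcd(k,n)=1$, every divisor $d\mid n$ satisfies $\gcd(k,d)=1$ as well, since any common prime factor of $k$ and $d$ would also divide $n$. This is the only number-theoretic observation needed.

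Given this, I would simply substitute $\gcd(k,d)=1$ into the two places it appears in the formula of Theorem~\ref{generalized}. First, $\lcm(k,d) = kd/\gcd(k,d) = kd$, so the exponent of $-1$ becomes $\lfloor (n-1)/k\rfloor - \lfloor (n-d)/(kd)\rfloor$. Second, $k/\gcd(k,d) = k$, so the generalized Euler number $E_{n/d}^{(k/\gcd(k,d))}$ simplifies to $E_{n/d}^{(k)}$. After these substitutions, the right-hand side of Theorem~\ref{generalized} becomes exactly the expression stated in the corollary.

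There is no real obstacle: the proof is a one-line specialization, and the body can essentially read ``Since $\gcd(k,n)=1$, each $d\mid n$ is coprime to $k$, so $\lcm(k,d)=kd$ and $k/\gcd(k,d)=k$; the claim then follows from Theorem~\ref{generalized}.''
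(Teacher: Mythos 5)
Your proof is correct and is exactly the intended argument: the paper presents this corollary as an immediate specialization of Theorem~\ref{generalized}, relying on precisely the observation that $\gcd(k,n)=1$ forces $\gcd(k,d)=1$ for every $d \mid n$, whence $\lcm(k,d)=kd$ and $k/\gcd(k,d)=k$. There is nothing to add.
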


It is worth considering the special case where $k$ is an odd prime.

\begin{cor} \label{gencor2}
Let $p$ be an odd prime. Then
$$\betacyc_n(p\mathbb{Z} \cap [n-1]) = \begin{cases}
\frac{1}{n}\sum_{\text{$d \mid n$}} \mob(d)\, (-1)^{\left\lfloor\frac{n-1}{p}\right\rfloor - \left\lfloor\frac{n-d}{pd}\right\rfloor}\, E_{n/d}^{(p)}, & \text{if $p \nmid n$;}\\
\frac{1}{n}\sum_{\text{$d \mid m$}} \mob(d)\,(-1)^{\frac{n(d-1)}{d}}\,E_{n/d}^{(p)}, & {\text{if $n = m p^a$ with $a\ge1$,}\atop\text{$p\nmid m$ and $m > 2$;}}\\
\frac{1}{n}\left(E_n^{(p)} + E_{n/2}^{(p)} - 2\right), & \text{if $n = 2p^a$ with $a\ge1$;} \medskip \\
\frac{1}{n}\left(E_n^{(p)} - 1\right), & \text{if $n = p^a$ with $a\ge1$.}
\end{cases}$$
\end{cor}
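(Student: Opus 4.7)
The plan is to specialize Theorem~\ref{generalized} to $k=p$ and then perform a case analysis on how each divisor $d\mid n$ interacts with $p$. Because $p$ is prime, either $\gcd(p,d)=1$ (giving $\lcm(p,d)=pd$ and $p/\gcd(p,d)=p$) or $\gcd(p,d)=p$ (giving $\lcm(p,d)=d$ and $p/\gcd(p,d)=1$). Splitting the sum in Theorem~\ref{generalized} accordingly, I write $n\cdot\betacyc_n(p\ZZ\cap[n-1])=\Sigma_1+\Sigma_2$, where $\Sigma_1$ collects the $d$ with $p\nmid d$ and $\Sigma_2$ the $d$ with $p\mid d$. When $p\nmid n$, every $d\mid n$ satisfies $p\nmid d$, so $\Sigma_2$ is empty and the identity reduces to Corollary~\ref{gencor1}, giving the first case.

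Assume from now on that $n=mp^a$ with $a\ge1$ and $p\nmid m$. For $d\mid m$, the divisibilities $p\mid n$ and $p\mid(n/d)$ yield $\lfloor(n-1)/p\rfloor=n/p-1$ and $\lfloor(n-d)/(pd)\rfloor=n/(pd)-1$, so the exponent of $-1$ in Theorem~\ref{generalized} simplifies to $n(d-1)/(pd)$; since $p$ is odd, this has the same parity as $n(d-1)/d$, and hence $\Sigma_1=\sum_{d\mid m}\mob(d)(-1)^{n(d-1)/d}E_{n/d}^{(p)}$. For $d$ with $p\mid d$, square-freeness forces $d=pe$ with $e\mid m$ square-free; then $\mob(d)=-\mob(e)$, $E_{n/d}^{(1)}=1$, and an analogous calculation produces $\Sigma_2=-S(m)$, where
\[S(m):=\sum_{e\mid m}\mob(e)\,(-1)^{n(e-1)/e}.\]

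All that remains is to evaluate $S(m)$. Writing $m=2^b m'$ with $m'$ odd, I decompose each square-free divisor $e\mid m$ as $e=2^\epsilon e'$ with $\epsilon\in\{0,1\}$ and $e'\mid m'$ odd. The $\epsilon=0$ contribution is $\sum_{e'\mid m'}\mob(e')=\delta_{m'=1}$, because $e-1$ is even. When $b\ge1$, the $\epsilon=1$ contribution is $-\sum_{e'\mid m'}\mob(e')(-1)^{n/(2e')}$: for $b\ge2$ the exponent is always even and this collapses to $-\delta_{m'=1}$, whereas for $b=1$ the exponent has the parity of $m'/e'$ (since $p^a$ is odd). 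A short case-check on $(m',b)$ then yields $S(1)=1$, $S(2)=2$, and $S(m)=0$ for every $m>2$. Substituting into $\frac{1}{n}(\Sigma_1-S(m))$ produces the three remaining cases of the corollary verbatim. The main obstacle is this parity analysis: correctly tracking the $2$-adic valuation of $m$ through the M\"obius-weighted alternating sum is the only step that is not a mechanical rewriting of Theorem~\ref{generalized}.
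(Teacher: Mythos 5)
Your proposal is correct and follows essentially the same route as the paper: specialize Theorem~\ref{generalized} to $k=p$, split the divisors of $n$ according to whether $p\mid d$, use the oddness of $p$ to reduce the exponents to $n(d-1)/d$ modulo $2$, and reduce the $p\mid d$ part to the M\"obius sum $-\sum_{e\mid m}\mob(e)(-1)^{n(e-1)/e}$. The only difference is organizational: you evaluate that sum uniformly via the factorization $e=2^{\epsilon}e'$, whereas the paper treats $m>2$, $m=2$, and $m=1$ as separate cases (further splitting $m>2$ by the parity of $n$), and both computations give the same values $1$, $2$, $0$.
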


\begin{proof} The case where $p \nmid n$ follows immediately from Corollary \ref{gencor1}.

Now assume $p \mid n$. We will use Theorem \ref{generalized}. If $d \mid n$ and $p \nmid d$, then
\[ (-1)^{\left\lfloor\frac{n-1}{p}\right\rfloor - \left\lfloor \frac{n-d}{\lcm(p,d)}\right\rfloor} = (-1)^{\frac{n}{p} - \frac{n}{pd}} = (-1)^{n - \frac{n}{d}} = (-1)^{\frac{n(d-1)}{d}}, \]
where the second equality holds because $p$ is odd. If $d \mid n$ and $p \mid d$, then
\[ (-1)^{\left\lfloor\frac{n-1}{p}\right\rfloor - \left\lfloor \frac{n-d}{\lcm(p,d)}\right\rfloor} = (-1)^{\frac{n}{p} - \frac{n}{d}} = (-1)^{n - \frac{n}{d}} = (-1)^{\frac{n(d-1)}{d}}, \]
where again the second equality holds because $p$ is odd. Now note that $E_k^{(1)} = 1$ for all $k\ge1$. Thus, writing $n = p^a m$ with $p\nmid m$, Theorem \ref{generalized} yields
\[ \betacyc_n(p\mathbb{Z} \cap [n-1]) = \frac{1}{n} \underbrace{\sum_{\text{$d \mid m$}} \mob(d)\, (-1)^{\frac{n(d-1)}{d}}E_{n/d}^{(p)}}_{\Psi_*} \,+\, \frac{1}{n} \underbrace{\sum_{\substack{\text{$d \mid n$}\\\text{$p \mid d$}}} \mob(d)\,(-1)^{\frac{n(d-1)}{d}}}_{\Psi_0}. \]
We prove each of the remaining parts as follows.

\begin{itemize}
\item Assume $m > 2$. It suffices to show that $\Psi_0 = 0$.

Observe that $\mob(d) = 0$ unless $d$ is square-free; and if $d \mid n$ is square-free and divisible by $p$, then we can write $d = pe$ with $e \mid m$, and $\mob(pe) = -\mob(e)$. Thus,
$$\Psi_0 = -\sum_{\text{$e \mid m$}} \mob(e)\,(-1)^{\frac{n(pe-1)}{pe}} = -\sum_{\text{$e \mid m$}} \mob(e)\,(-1)^{\frac{n(e-1)}{e}}.$$ 
If $n$ is odd or if $4 \mid n$, then $(-1)^{\frac{n(e-1)}{e}} = 1$ for all square-free $e \mid m$, so $\Psi_0 = -\sum_{\text{$e \mid m$}} \mob(e)$, which is zero because $m > 1$.

If $n$ is even and $4\nmid n$, then $(-1)^{\frac{n(e-1)}{e}} = -(-1)^e$ for $e \mid m$. Since $m$ is even, we can write $m = 2l$; since $m$ is not divisible by $4$, $l$ is odd; and since $m > 2$, we have $l > 1$. Thus,
$$
\Psi_0 = \sum_{\text{$e \mid m$}} \mob(e)\,(-1)^e = -\sum_{\substack{\text{$e \mid m$}\\\text{$e$ odd}}} \mob(e) + \sum_{\substack{\text{$e \mid m$}\\\text{$e$ even}}} \mob(e) 
= -\sum_{\text{$e \mid l$}} \mob(e) - \sum_{\text{$f \mid l$}} \mob(f),$$
and each of these sums is zero because $l > 1$.

\item Assume $m = 2$, so $n = 2p^a$. Then $\Psi_* = E_n^{(p)} - (-1)^{p^a}E_{n/2}^{(p)} = E_n^{(p)} + E_{n/2}^{(p)}$. And $p$ and $2p$ are the only square-free divisors of $n$ that are divisible by $p$, so
\begin{align*} \Psi_0 &= -(-1)^{2p^{a-1}(p-1)} + (-1)^{p^{a-1}(2p-1)} = -2. \end{align*}

\item Assume $m = 1$, so $n = p^a$. Then $\Psi_* = E_n^{(p)}$. And $p$ is the only square-free divisor of $n$ that is divisible by $p$, so $\Psi_0 = -(-1)^{p^{a-1}(p-1)} = -1$. \qedhere
\end{itemize}\end{proof}

\section{Asymptotic results} \label{AsymptoticSection}

In this section we use the notation $f(n)\sim g(n)$ to mean $\lim_{n\to\infty} g(n)/f(n)=1$, and $f(n)\gg g(n)$ to mean $\lim_{n\to\infty} g(n)/f(n)=0$.

\subsection{Questions about asymptotic independence}\label{sec:independence}

In \cite[Sec.\ 5]{StanleySurvey}, Stanley describes the following consequence of his result expressed in Corollary \ref{alternating} above:
\begin{quote}``\ldots as $n \to \infty$, a fraction $1/n$ of the alternating permutations are $n$-cycles. Compare this with the simple fact that (exactly) $1/n$ of the permutations $w \in \Perm_n$ are $n$-cycles. We can say that the properties of being an alternating permutation and an $n$-cycle are `asymptotically independent.' What other classes of permutations are asymptotically independent from the alternating permutations?"\end{quote}

Considering that alternating permutations are those with descent set $2\ZZ\cap[n-1]$, here we ask the following related question.
\begin{question} For what other sets $I \subseteq [n-1]$ are the properties of having descent set $I$ and being an $n$-cycle asymptotically independent?
\end{question} 

To make this asymptotic question precise, one has to define the sets $I \subseteq [n-1]$ for arbitrary values of $n$. We conjecture that, in fact, as long as each $I$ is a non-empty proper subset of $[n-1]$, we asymptotically have $\frac{\betacyc_n(I)}{\beta_n(I)} \sim \frac{1}{n}$, and so the two above properties are independent in a very strong sense:

\begin{conj} \label{Conjecture} $$\lim_{n\to\infty}\max_{\varnothing \subsetneqq I \subsetneqq [n-1]} \left| \frac{n\, \betacyc_n(I)}{\beta_n(I)} - 1\right| = 0.$$
\end{conj}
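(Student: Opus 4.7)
The natural starting point is Theorem~\ref{maintheorem}(c). Isolating the $d=1$ contribution yields
\[
n\,\betacyc_n(I) - \beta_n(I) \;=\; \sum_{\substack{d\mid n \\ d>1}} \mob(d)\,(-1)^{|I|-|I/d|}\,\beta_{n/d}(I/d),
\]
so Conjecture~\ref{Conjecture} reduces to showing that the right-hand side is $o(\beta_n(I))$ uniformly in $\varnothing\subsetneqq I\subsetneqq[n-1]$. Dropping the signs, it is enough to prove
\[
\max_{\varnothing\subsetneqq I\subsetneqq[n-1]} \frac{1}{\beta_n(I)} \sum_{\substack{d\mid n \\ d>1}} \beta_{n/d}(I/d) \;\longrightarrow\; 0 \quad \text{as } n\to\infty.
\]

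First I would dispose of the ``coprime'' cases. If $\gcd(i,n)=1$ for every $i\in I$, then $I/d=\varnothing$ whenever $d>1$ divides $n$, so only the summand $d=n$ contributes and its size is at most $\betacyc_1(\varnothing)=1$; this is exactly the content of Corollary~\ref{cor1}(b). The conjecture on this family then follows once one checks that $\beta_n(I)\to\infty$ uniformly, which can be done by standard inclusion--exclusion estimates on the multinomials $\alpha_n(J)=\binom{n}{\co(J)}$ for $J\subseteq I$. A similar argument should handle any $I$ whose elements share only $O(1)$ common factors with $n$.

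The general case requires bounding each ratio $\beta_{n/d}(I/d)/\beta_n(I)$ by a quantity tending to $0$ faster than the number-of-divisors function $\tau(n)$. Using the interpretation of $\beta_n(I)$ as the number of standard Young tableaux of ribbon shape $\co(I)$, a plausible approach is to construct, for each SYT of the coarser ribbon $\co(I/d)$, many SYTs of $\co(I)$ by inflating each cell into $d$ cells and inserting the new labels $\{n/d+1,\dots,n\}$ in all order-preserving ways consistent with the descent condition. If such an inflation produces at least $\binom{n}{n/d}/\mathrm{poly}(n)$ distinct refinements, then $\beta_{n/d}(I/d)/\beta_n(I)$ decays geometrically, and the desired bound follows by summing over the $\tau(n) = n^{o(1)}$ divisors of $n$.

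\textbf{The main obstacle}, and likely the reason this statement remains a conjecture, is uniformity over the ``extremal'' $I$ for which $\beta_n(I)$ is very small. For instance, $\beta_n([n-1]\setminus\{k\}) = \binom{n}{k}-1$ can be as small as $n-1$, while the crude bound $\beta_{n/d}(I/d)\le (n/d)!$ is wildly larger. For such $I$ the inflation argument cannot work term-by-term, and one must exploit the signed structure of the M\"obius sum. A natural strategy is to pair divisors $d$ and $pd$ carrying opposite M\"obius weights (for appropriate primes $p$) and prove a uniform cancellation $\beta_{n/d}(I/d) - \beta_{n/(pd)}(I/(pd)) = o(\beta_n(I))$. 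Establishing such a signed bound across the divisor lattice of $n$ appears to be the key technical step, and I would expect it to require new combinatorial input beyond Theorem~\ref{maintheorem}.
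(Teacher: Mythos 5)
The statement you were asked to prove is stated in the paper as a \emph{conjecture}: the paper contains no proof of it, only proofs of special cases (Theorem~\ref{AlphaTheorem} for the $\alpha$-version, Theorem~\ref{thm:periodic2} for periodic descent sets, and Theorem~\ref{TheBigOne} for sets with large alternation number). Your proposal is likewise not a proof; it is a reduction plus a strategy sketch, and you yourself flag the missing step. So there is a genuine gap, but it is the same gap the authors leave open.

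That said, your analysis lines up closely with what the paper actually does. Your opening reduction --- isolate the $d=1$ term of Theorem~\ref{maintheorem}(c) and try to show the tail is $o(\beta_n(I))$ --- is precisely the paper's Lemma~\ref{bound2} (which bounds the tail crudely by $(n/2)\lfloor n/2\rfloor!$ after dropping signs) combined with Lemma~\ref{framework} (which then requires $\min_I \beta_n(I) \gg (n/2)\lfloor n/2\rfloor!$). Your identification of the obstacle is also the right one: for sets such as $I=[n-1]\smallsetminus\{k\}$ one has $\beta_n(I)=\binom{n}{k}-1$, which can be as small as $n-1$, so no unsigned term-by-term bound of the form $\beta_{n/d}(I/d)\le (n/d)!$ can possibly suffice, and this is exactly why the paper restricts to families $\mathcal{I}_n$ on which $\beta_n(I)$ is superexponentially large (via Bender--Helton--Richmond for periodic sets, and via Ehrenborg--Mahajan plus Lemma~\ref{EulerBound} for sets of large alternation number). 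Where your proposal goes beyond the paper --- the ``inflation'' of ribbon tableaux to get $\beta_{n/d}(I/d)/\beta_n(I)$ decaying geometrically, and the pairing of divisors $d$ and $pd$ to exploit cancellation in the M\"obius sum --- is speculative: neither construction is carried out, the inflation map is not well defined as stated (inserting $n-n/d$ new labels into a ribbon of shape $\co(I/d)$ does not naturally produce tableaux of shape $\co(I)$ respecting the descent conditions), and no uniform estimate is established. If you want a provable statement rather than a strategy, you should restrict to one of the families the paper handles, or else supply the uniform lower bound on $\beta_n(I)$ (or the signed cancellation) that you correctly identify as the crux.
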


We first use our main theorem to prove that an analogous relationship between $\alphacyc_n(I)$ and $\alpha_n(I)$ holds.

\begin{thm} \label{AlphaTheorem} $$\lim_{n\to\infty}\max_{\varnothing \subsetneqq I \subseteq [n-1]} \left| \frac{n\, \alphacyc_n(I)}{\alpha_n(I)} - 1\right| = 0.$$
\end{thm}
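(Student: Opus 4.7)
The plan is to apply Theorem \ref{maintheorem}(a) to separate off the $d=1$ term and bound the remaining ``error terms'' uniformly. Setting $m = (I,n)$ and $\mu = \co(I)$, Theorem \ref{maintheorem}(a) yields
$$\alpha_n(I) - n\,\alphacyc_n(I) = \sum_{\substack{d\mid m\\ d>1}} \frac{n}{d}\,\alphacyc_{n/d}(I/d),$$
so the goal reduces to showing that this sum, divided by $\alpha_n(I) = \binom{n}{\mu}$ (using Lemma \ref{lem:multi}), tends to $0$ uniformly over non-empty $I \subseteq [n-1]$. Observe that if $m=1$, the sum is empty and the ratio is already $0$, so only $m \geq 2$ matters.

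To bound each summand I would first apply Theorem \ref{maintheorem}(a) again, this time to $n/d$ and $I/d$, and retain only the $e=1$ term (discarding nonnegative contributions) to get
$$\frac{n}{d}\,\alphacyc_{n/d}(I/d) \leq \alpha_{n/d}(I/d) = \binom{n/d}{\mu/d},$$
where the last equality uses Lemmas \ref{sillylemma} and \ref{lem:multi}. The key step, which I expect to be the main technical point, is the multinomial inequality
$$\binom{n/d}{\mu/d}^d \leq \binom{n}{\mu} \qquad \text{for every $d\mid m$.}$$
I would prove this by a blocking argument: partition $[n]$ into $d$ consecutive blocks of size $n/d$, and observe that independently filling each block with a word of evaluation $\mu/d$ produces distinct length-$n$ words of evaluation $\mu$, so the total number of such words is at least $\binom{n/d}{\mu/d}^d$. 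For $d \geq 2$ this gives $\binom{n/d}{\mu/d} \leq \binom{n}{\mu}^{1/2}$.

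Summing over divisors of $m$, we get
$$\bigl|\tfrac{n\,\alphacyc_n(I)}{\alpha_n(I)} - 1\bigr| \leq \tau(m)\,\binom{n}{\mu}^{-1/2},$$
where $\tau(m)$ denotes the number of divisors of $m$. When $m \geq 2$, every part of $\mu$ is at least $2$ and there are at least two parts (since $I \neq \varnothing$), so $\binom{n}{\mu} \geq \binom{n}{\mu_1} \geq \binom{n}{2} = n(n-1)/2$. Therefore the bound becomes $O(\tau(n)/n)$, and since $\tau(n) = n^{o(1)}$, this tends to $0$ uniformly in $I$. The main obstacle I anticipate is identifying and proving the blocking inequality in a way that treats all compositions $\mu$ simultaneously; the rest is routine divisor-sum bookkeeping, and the uniformity comes for free because the final bound $O(\tau(n)/n)$ depends only on $n$.
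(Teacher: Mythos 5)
Your proposal is correct and follows essentially the same route as the paper: after reducing to the sum $\sum_{d\mid m,\,d\neq 1}\binom{n/d}{\mu/d}/\binom{n}{\mu}$ (the paper gets there via Theorem \ref{maintheorem}(b) and the triangle inequality rather than via (a) with nonnegative terms), the key step is the identical concatenation inequality $\binom{n/d}{\mu/d}^d\le\binom{n}{\mu}$, followed by the same divisor-count bookkeeping. Your lower bound $\binom{n}{\mu}\ge\binom{n}{2}$ is a slight sharpening of the paper's $\binom{n}{\mu}\ge n$, but both yield the needed convergence since $\tau(n)=n^{o(1)}$.
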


\begin{proof}
Let $I$ be a non-empty subset of $[n-1]$, let $\mu = \co(I)$, and let $m = \text{gcd}(I \cup \{n\})$. By Theorem~\ref{maintheorem}(b), 
\begin{align*}
\left| \frac{n\, \alphacyc_n(I)}{\alpha_n(I)} - 1\right| = \left| \sum_{\text{$d \mid m$}} \frac{\mob(d)\,\alpha_{n/d}(I/d)}{\alpha_n(I)} - 1\right| 
\le \sum_{\substack{\text{$d \mid m$}\\d\not=1}} \frac{\alpha_{n/d}(I/d)}{\alpha_n(I)} = \sum_{\substack{\text{$d \mid m$}\\d\not=1}} \frac{\binom{n/d}{\mu/d}}{ \binom{n}{\mu}}.
\end{align*}
Among the $\binom{n}{\mu}$ words with evaluation $\mu$, $\binom{n/d}{\mu/d}^d$ is the number of those of the form $w_1 \dots w_d$ where each $w_i$ is a word with evaluation $\mu/d$. Thus, $\binom{n/d}{\mu/d} \le \binom{n}{\mu}^{1/d}$, and
$$\left| \frac{n\, \alphacyc_n(I)}{\alpha_n(I)} - 1\right| \le \sum_{\substack{\text{$d \mid m$}\\d\not=1}} \binom{n}{\mu}^{1/d-1} \le \sum_{\substack{\text{$d \mid m$}\\d\not=1}} \binom{n}{\mu}^{-1/2} \le d(n)\,\binom{n}{\mu}^{-1/2}\le d(n)\,n^{-1/2},
$$
where $d(n)$ denotes the number of divisors of $n$, and we used that $\binom{n}{\mu} \ge \binom{n}{\mu_1, n-\mu_1} \ge \binom{n}{n-1, 1} = n$.
The proof is completed by the fact that $\lim_{n\to\infty} d(n)\,n^{-1/2} = 0$, which is shown in~\cite[Sec.\ 13.10]{Apostol}.
\end{proof}

In Section~\ref{sec:special} we prove more specialized versions of Conjecture~\ref{Conjecture}, using lemmas that we introduce in the remainder of this subsection. 
Just as it follows immediately from Corollary \ref{alternating} that the fraction of alternating permutations that are cycles asymptotically approaches $1/n$, it will follow (not immediately) from Theorem \ref{maintheorem}(c) that, among permutations with almost any given descent set (in a sense which we will make clear), the fraction of those that are cycles asymptotically approaches $1/n$.


\begin{lem} \label{bound2}
If $n \ge 2$ and $I \subseteq [n-1]$, then $\left|n\,\betacyc_n(I) - \beta_n(I) \right| \le (n/2)\left\lfloor n/2 \right\rfloor!$.
\end{lem}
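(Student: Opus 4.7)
The plan is to read the inequality directly off Theorem~\ref{maintheorem}(c), isolate the $d=1$ term (which contributes exactly $\beta_n(I)$), and then bound the remaining terms by crude estimates on factorials and on the number of divisors.

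Multiplying the identity in Theorem~\ref{maintheorem}(c) by $n$ and peeling off the $d=1$ summand (for which $\mob(1)=1$ and $|I/1|=|I|$) gives
$$n\,\betacyc_n(I) - \beta_n(I) \;=\; \sum_{\substack{d\mid n\\ d\ne 1}} \mob(d)\,(-1)^{|I|-|I/d|}\,\beta_{n/d}(I/d).$$
Applying the triangle inequality together with $|\mob(d)|\le 1$ and the trivial bound $\beta_{n/d}(I/d)\le (n/d)!$ yields
$$\bigl|n\,\betacyc_n(I) - \beta_n(I)\bigr| \;\le\; \sum_{\substack{d\mid n\\ d\ne 1}} (n/d)!.$$

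The remaining work is purely arithmetic. For every divisor $d\ge 2$ of $n$, the quotient $n/d$ is a positive integer at most $n/2$, hence $n/d\le \lfloor n/2\rfloor$ and $(n/d)!\le \lfloor n/2\rfloor!$. To count how many such $d$ occur, I would use the involution $d\mapsto n/d$: the divisors of $n$ that are $\ge 2$ correspond bijectively to the divisors of $n$ that are $\le n/2$, and the latter form a subset of $\{1,2,\ldots,\lfloor n/2\rfloor\}$, so their number is at most $\lfloor n/2\rfloor$. Combining these two observations gives
$$\bigl|n\,\betacyc_n(I) - \beta_n(I)\bigr| \;\le\; \lfloor n/2\rfloor\cdot\lfloor n/2\rfloor! \;\le\; (n/2)\,\lfloor n/2\rfloor!,$$
as claimed.

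There is no real obstacle here: the proof is essentially one line of manipulation of Theorem~\ref{maintheorem}(c) followed by two elementary bounds. The only thing to watch is that the divisor-counting step is applied to divisors $\ge 2$ rather than to all divisors, so that the factor $\lfloor n/2\rfloor$ (and not something larger like $d(n)$) appears; this is what makes the bound match the stated form $(n/2)\lfloor n/2\rfloor!$.
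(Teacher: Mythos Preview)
Your proof is correct and follows essentially the same argument as the paper's: isolate the $d=1$ term in Theorem~\ref{maintheorem}(c), bound each remaining summand by $\lfloor n/2\rfloor!$, and bound the number of remaining divisors by $n/2$. You are simply more explicit than the paper about the last step (using the bijection $d\mapsto n/d$ to count divisors $\ge 2$), which the paper leaves implicit.
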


\begin{proof}
By Theorem \ref{maintheorem}(c),
\begin{align*}
\left|n\,\betacyc_n(I) - \beta_n(I) \right| &= \left| \sum_{\substack{\text{$d \mid n$}\\\text{$d\not=1$}}} \mob(d)\,(-1)^{|I|-|I/d|}\,\beta_{n/d}(I/d) \right| \le \sum_{\substack{\text{$d \mid n$}\\\text{$d\not=1$}}} \beta_{n/d}(I/d) \\
&\le \sum_{\substack{\text{$d \mid n$}\\\text{$d\not=1$}}} \lfloor n/d \rfloor !  \le \sum_{\substack{\text{$d \mid n$}\\\text{$d\not=1$}}} \lfloor n/2 \rfloor ! \le (n/2)\,\lfloor n/2 \rfloor !. \qedhere
\end{align*}\end{proof}

The last lemma of this subsection provides the framework in which we prove the results in Section \ref{sec:special}. 

\begin{lem} \label{framework} For each $n\ge1$, let $\mathcal{I}_n \subseteq 2^{[n-1]}$; that is, $\mathcal{I}_n$ is a collection of subsets of $[n-1]$. If $\min_{I \in \mathcal{I}_n} \beta_n(I) \gg (n/2)\,\left\lfloor n/2\right\rfloor!$, then $\lim_{n\to\infty} \max_{I \in \mathcal{I}_n} \left|\frac{n\,\betacyc_n(I)}{\beta_n(I)} - 1\right| = 0$.
\end{lem}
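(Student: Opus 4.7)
The proof is essentially a one-line application of Lemma~\ref{bound2} combined with the hypothesis. The plan is as follows.

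First I would apply Lemma~\ref{bound2} to each $I \in \mathcal{I}_n$ to obtain
\[ \left|\frac{n\,\betacyc_n(I)}{\beta_n(I)} - 1\right| = \frac{|n\,\betacyc_n(I) - \beta_n(I)|}{\beta_n(I)} \le \frac{(n/2)\,\lfloor n/2\rfloor!}{\beta_n(I)}. \]
Crucially, the numerator $(n/2)\,\lfloor n/2\rfloor!$ on the right-hand side depends only on $n$, not on $I$, so taking the maximum over $I \in \mathcal{I}_n$ gives
\[ \max_{I \in \mathcal{I}_n} \left|\frac{n\,\betacyc_n(I)}{\beta_n(I)} - 1\right| \le \frac{(n/2)\,\lfloor n/2\rfloor!}{\min_{I \in \mathcal{I}_n} \beta_n(I)}. \]

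Next I would invoke the hypothesis. By definition of the notation $\gg$ set up at the start of Section~\ref{AsymptoticSection}, the assumption $\min_{I \in \mathcal{I}_n} \beta_n(I) \gg (n/2)\,\lfloor n/2\rfloor!$ means precisely that the right-hand side above tends to $0$ as $n \to \infty$. Since the left-hand side is non-negative and bounded above by a quantity that vanishes in the limit, the squeeze theorem yields the desired conclusion
\[ \lim_{n\to\infty} \max_{I \in \mathcal{I}_n} \left|\frac{n\,\betacyc_n(I)}{\beta_n(I)} - 1\right| = 0. \]

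There is no real obstacle here: all the substantive work has been done in Lemma~\ref{bound2}, which controls the additive error $|n\,\betacyc_n(I) - \beta_n(I)|$ by a quantity independent of $I$, and the statement of Lemma~\ref{framework} is simply the right uniform framework in which such an additive bound translates into a multiplicative (ratio) bound. The only minor care needed is to observe that the bound from Lemma~\ref{bound2} is genuinely uniform in $I$, which it plainly is since the right-hand side of the inequality in that lemma does not depend on $I$ at all.
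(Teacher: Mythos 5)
Your proof is correct and follows exactly the same route as the paper's: apply Lemma~\ref{bound2}, note the bound is uniform in $I$, pass from the maximum of the ratio to the minimum of $\beta_n(I)$ in the denominator, and invoke the hypothesis via the definition of $\gg$. Nothing is missing.
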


\begin{proof} Using Lemma~\ref{bound2},
$$\max_{I \in \mathcal{I}_n} \left|\frac{n\,\betacyc_n(I)}{\beta_n(I)} - 1\right| = \max_{I \in \mathcal{I}_n} \frac{\left|n\,\betacyc_n(I) - \beta_n(I) \right|}{\beta_n(I)} 
\le \max_{I \in \mathcal{I}_n} \frac{(n/2)\,\lfloor n/2\rfloor!}{\beta_n(I)}= \frac{(n/2)\,\lfloor n/2\rfloor!}{\displaystyle\min_{I \in \mathcal{I}_n} \beta_n(I)},$$
which tends to zero as $n\to\infty$. 
\end{proof}

\subsection{Asymptotic independence in special cases} \label{sec:special}

We first prove Conjecture~\ref{Conjecture} in the case where the descent set is periodic. Let $\PP$ denote the set of positive integers. For $\ell \ge 1$, say $I \subseteq \PP$ is \emph{$\ell$-periodic} if, for each $i \in \PP$, we have $i \in I$ if and only if $i+\ell \in I$.
Our main tool is a result of Bender, Helton and Richmond:

\begin{thm}[{\cite[Thm.\ 1 \& 2]{BHR}}] \label{thm:BHR} If $I$ is $\ell$-periodic with $\varnothing \subsetneqq I \subsetneqq \PP$, and $j \ge 0$, then
\begin{align*} \beta_{\ell m}(I \cap [\ell m-1]) \sim C\,\lambda^{\ell m}\,(\ell m)! \quad\text{and}\quad
\frac{\beta_{\ell m+j}(I \cap [\ell m+j-1])}{(\ell m+j)!} \sim A_j\,\frac{\beta_{\ell m}(I \cap [\ell m-1])}{(\ell m)!}
\end{align*}
as $m \to \infty$, for some constants $C, \lambda, A_j > 0$.
\end{thm}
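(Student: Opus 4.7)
The plan is to reduce both asymptotics to a singularity analysis of the exponential generating function
\[ F(x) = \sum_{n\ge 0} \beta_n(I \cap [n-1])\, \frac{x^n}{n!}. \]
By inclusion--exclusion applied to Lemma~\ref{lem:multi}, $\beta_n(I\cap[n-1])$ is a signed sum of multinomials indexed by compositions that refine $\co(I\cap[n-1])$, and the $\ell$-periodicity of $I$ makes these compositions themselves $\ell$-periodic up to their two endpoints. This periodic block structure lets us express $F(x)$ as a finite linear combination of entries of $(\mathrm{Id}-xM)^{-1}$, where $M$ is an $\ell\times\ell$ transfer matrix that records, period by period, how to extend a rising-run pattern consistently with the prescribed descent configuration. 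Equivalently, $F$ is a meromorphic function whose singularities are the reciprocals of the eigenvalues of $M$. For the benchmark $\ell=2$ alternating case, this recovers the classical identity $F(x)=\sec x+\tan x$.

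Next I would perform singularity analysis on $F$. Let $x_0$ be the smallest positive singularity and set $\lambda := 1/x_0$. Using a Perron--Frobenius argument on a suitable positive gauge of $M$, I would show that $x_0$ is a simple pole and that the only other singularities of $F$ on the circle $|x|=x_0$ are located at points $\zeta x_0$ with $\zeta^{\ell}=1$, reflecting the intrinsic $\ell$-fold symmetry of the problem. The standard transfer theorems of analytic combinatorics then yield, along each arithmetic progression $n\equiv j\pmod{\ell}$, an asymptotic of the shape $[x^n]F(x)\sim C_j\,\lambda^n$, where $C_j>0$ is computed from the residues of $F$ at the dominant singularities. Multiplying by $n!$ gives
\[ \beta_{\ell m+j}(I\cap[\ell m+j-1]) \sim C_j\,\lambda^{\ell m+j}\,(\ell m+j)!. \]
The first claim is the case $j=0$ with $C:=C_0$, and the second follows with $A_j=(C_j/C_0)\,\lambda^j$.

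The main obstacle is controlling the singularities of $F$ on the dominant circle $|x|=x_0$ and proving simplicity of the dominant eigenvalue of $M^\ell$. This is where the hypothesis $\varnothing\subsetneqq I\subsetneqq\PP$ is essential: if $I=\varnothing$ or $I=\PP$, the transfer matrix $M$ degenerates and $F$ reduces to an entire function (such as $e^x$ or $1/(1-x)$ type objects formal-combinatorially), so no finite $\lambda$ exists. Away from those trivial cases, one needs $M^\ell$ to be primitive, which I would prove by an explicit combinatorial argument exhibiting paths in the state graph of $M^\ell$ between any two states, exploiting the non-triviality of the period pattern $I\cap[\ell]$ to guarantee enough flexibility. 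Once primitivity is established, Perron--Frobenius delivers a unique eigenvalue of $M^\ell$ of strictly largest modulus, and the analytic machinery outlined above produces the asymptotics in the form claimed.
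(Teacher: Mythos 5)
First, a remark on scope: the paper does not prove this statement at all --- it is quoted verbatim from Bender, Helton and Richmond \cite{BHR} as an external input --- so there is no internal proof to compare yours against, and any proof you give must stand on its own. Judged on those terms, your proposal has a genuine gap at its structural core. You claim that $F(x)=\sum_{n}\beta_n(I\cap[n-1])\,x^n/n!$ is a finite linear combination of entries of $(\mathrm{Id}-xM)^{-1}$ for a finite $\ell\times\ell$ nonnegative matrix $M$, hence rational with singularities at the reciprocal eigenvalues of $M$, and you then run Perron--Frobenius and a primitivity argument on $M^\ell$. This cannot work: permutations, unlike words over a finite alphabet, admit no finite-state transfer-matrix description of their descent structure, because the ``state'' needed to extend a permutation by one entry is the relative rank of its last entry, which takes $n$ values. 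Your own benchmark refutes the claim: $\sec x+\tan x$ is not rational; it is meromorphic with infinitely many poles at $\pi/2+k\pi$. A rational $F$ would force $\beta_n/n!$ to satisfy a constant-coefficient linear recurrence, which already fails for the Euler numbers. Since the matrix $M$ you build everything on does not exist, the localization of dominant singularities, the simplicity of the dominant pole, and the primitivity argument all collapse as written.

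The salvageable core of your idea takes one of two forms. Either write $F$ as a quotient $N(x)/D(x)$ of \emph{entire} functions coming from the inclusion--exclusion determinant for periodic descent compositions --- this is exactly the structure visible in Equation~\eqref{eq:monotone} and in the specialization $h_r\mapsto x^r/r!$ of Theorem~\ref{symmetricfunction}(c) --- and then analyse the smallest positive zero $x_0$ of $D$, proving it is simple, that $N(x_0)\ne 0$, and locating all other zeros of $D$ on $|x|=x_0$; or follow what \cite{BHR} actually do and replace your finite matrix by a positive compact \emph{integral} operator on a function space, invoking Krein--Rutman/Jentzsch theory in place of Perron--Frobenius. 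Note also a secondary gap: even granting the singularity structure, you assert but do not prove that the residue constants $C_j$ (hence the $A_j$) are strictly positive; when several singularities share the dominant circle, nonnegativity of the coefficients alone does not rule out cancellation along an arithmetic progression $n\equiv j\pmod{\ell}$, and a separate positivity or supermultiplicativity argument is required.
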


From Theorem \ref{thm:BHR} we easily obtain the following.
\begin{cor} \label{cor:periodic1} If $I$ is $\ell$-periodic and $\varnothing \subsetneqq I \subsetneqq \PP$, then there exist constants $C_1, C_2, \lambda > 0$ such that
\[ C_1\,\lambda^n\,n! \le \beta_n(I \cap [n-1]) \le C_2\,\lambda^n\,n! \]
for all $n$. \hfill $\square$
\end{cor}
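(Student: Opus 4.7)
The plan is to unify the two asymptotic statements in Theorem~\ref{thm:BHR} across the $\ell$ residue classes modulo $\ell$. Writing $n = \ell m + j$ with $0 \le j < \ell$, and adopting the convention $A_0 = 1$, the two parts of Theorem~\ref{thm:BHR} combine to give
\[ \frac{\beta_n(I \cap [n-1])}{n!} \;\sim\; A_j \cdot \frac{\beta_{\ell m}(I \cap [\ell m - 1])}{(\ell m)!} \;\sim\; A_j\, C\, \lambda^{\ell m} \;=\; \frac{A_j C}{\lambda^j}\, \lambda^n \]
as $m \to \infty$ with $j$ fixed. Each of the $\ell$ constants $A_j C / \lambda^j$ is strictly positive.

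Since there are only finitely many residues $j \in \{0, 1, \ldots, \ell-1\}$, I can take the minimum and maximum of these constants (with a small buffer) to extract $c_1, c_2 > 0$ and an integer $N$ such that
\[ c_1\, \lambda^n\, n! \;\le\; \beta_n(I \cap [n-1]) \;\le\; c_2\, \lambda^n\, n! \qquad \text{for every } n \ge N. \]
For the finitely many $n < N$, note that every subset of $[n-1]$ is realized as the descent set of at least one permutation, so $\beta_n(I \cap [n-1])$ is a positive integer bounded above by $n!$. Hence by shrinking $c_1$ and enlarging $c_2$ to absorb these finitely many base cases, we obtain constants $C_1, C_2 > 0$ satisfying the stated double inequality for all $n \ge 1$.

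There is no real obstacle here: the entire content lies in Theorem~\ref{thm:BHR}, and the corollary is just the observation that ``$\sim$'' along each residue class, together with positivity of all constants and finiteness of the number of residue classes, upgrades to two-sided bounds of the form $C_1 \lambda^n n! \le \beta_n(I \cap [n-1]) \le C_2 \lambda^n n!$ uniform in $n$. The only point requiring a line of justification is the positivity of $\beta_n(I \cap [n-1])$ for small $n$, needed so the base cases do not force $C_1 = 0$.
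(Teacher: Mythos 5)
Your proof is correct and fills in exactly the argument the paper leaves implicit (the corollary is stated with no written proof, as an immediate consequence of Theorem~\ref{thm:BHR}): combine the two asymptotics along each of the finitely many residue classes mod $\ell$, use positivity of the constants $A_j C/\lambda^j$ to get two-sided bounds for large $n$, and absorb the finitely many small $n$ using $\beta_n(I\cap[n-1])\ge 1$. Nothing further is needed.
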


We now use Corollary~\ref{cor:periodic1} to obtain a proof of Conjecture \ref{Conjecture} in the case where the descent set is periodic.

\begin{thm} \label{thm:periodic2} Fix $k \ge 2$, and define
\[ \mathcal{I}_n = \{ I \cap [n-1]  \colon \varnothing\subsetneqq I \subsetneqq \PP \text{ and $I$ is $\ell$-periodic for some $\ell$ with $1 \le \ell \le k$}\}. \]
Then $\lim_{n\to\infty} \max_{I \in \mathcal{I}_n} \left|\frac{n\,\betacyc_n(I)}{\beta_n(I)} - 1\right| = 0$.
\end{thm}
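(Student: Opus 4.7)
The strategy is to verify the hypothesis of Lemma~\ref{framework} for the family $\mathcal{I}_n$: once the bound $\min_{I \in \mathcal{I}_n} \beta_n(I) \gg (n/2)\lfloor n/2\rfloor!$ is established, the conclusion is immediate. The crux lies in turning the $I$-dependent asymptotics from Corollary~\ref{cor:periodic1} into an estimate that is uniform over all $I \in \mathcal{I}_n$.

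To do this, I would first observe that the collection
\[ \mathcal{F}_k := \{I \subseteq \PP \colon \varnothing \subsetneqq I \subsetneqq \PP \text{ and $I$ is $\ell$-periodic for some $1 \le \ell \le k$}\} \]
is \emph{finite}, since any $\ell$-periodic $I$ is determined by $I \cap [\ell]$, giving $\lvert\mathcal{F}_k\rvert \le \sum_{\ell=1}^{k} 2^\ell$. For each $I \in \mathcal{F}_k$, Corollary~\ref{cor:periodic1} supplies constants $C_1(I), \lambda(I) > 0$ satisfying $\beta_n(I \cap [n-1]) \ge C_1(I)\,\lambda(I)^n\,n!$ for every $n \ge 1$. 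Because $\mathcal{F}_k$ is finite, $C := \min_{I \in \mathcal{F}_k} C_1(I)$ and $\lambda := \min_{I \in \mathcal{F}_k} \lambda(I)$ are both positive, and since every element of $\mathcal{I}_n$ has the form $I \cap [n-1]$ for some $I \in \mathcal{F}_k$, this yields the uniform lower bound $\min_{I' \in \mathcal{I}_n} \beta_n(I') \ge C\lambda^n n!$ for all $n$.

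Finally I would carry out the routine growth comparison $C\lambda^n n! \gg (n/2)\lfloor n/2\rfloor!$. The elementary estimate $n!/\lfloor n/2\rfloor! \ge (n/2)^{\lceil n/2\rceil}$ (each factor in the product $n(n-1)\cdots(\lfloor n/2\rfloor + 1)$ exceeds $n/2$) shows that $n!/\lfloor n/2\rfloor!$ grows super-exponentially, and therefore dominates the exponential factor $\lambda^{-n}$ regardless of the sign of $\log\lambda$. This verifies the hypothesis of Lemma~\ref{framework} and completes the proof. There is no serious obstacle here; the only substantive content is the finiteness of $\mathcal{F}_k$, which converts the $I$-dependent bounds of Corollary~\ref{cor:periodic1} into the required uniform one.
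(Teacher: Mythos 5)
Your proposal is correct and follows essentially the same route as the paper: both reduce to Lemma~\ref{framework} by noting that the underlying family of periodic subsets of $\PP$ is finite, so the $I$-dependent lower bounds from Corollary~\ref{cor:periodic1} can be made uniform by taking minima of the finitely many constants. Your added justification of the growth comparison $C\lambda^n n! \gg (n/2)\lfloor n/2\rfloor!$ is a detail the paper leaves implicit, but the argument is the same.
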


\begin{proof}
Let $$\mathcal{I}=\{I\colon \varnothing\subsetneqq I \subsetneqq \PP  \text{ and $I$ is $\ell$-periodic for some $\ell$ with $1 \le \ell \le k$}\},$$ and note that $\mathcal{I}_n = \{ I \cap [n-1] \colon I\in \mathcal{I}\}$. For $I \in \mathcal{I}$, Corollary \ref{cor:periodic1} says that
\[ \beta_n(I \cap [n-1]) \ge C_I \,{\lambda_I\!}^n\, n! \]
for all $n$, for some constants $C_I, \lambda_I > 0$. But there are only finitely many sets in $\mathcal{I}$, so
\[ \min_{I \in \mathcal{I}_n} \beta_n(I) = \min_{I\in \mathcal{I}} \beta_n(I \cap [n-1]) \ge C_\text{min} \,{\lambda_\text{min}\!}^n \,n!, \]
where $\lambda_\text{min}$ is the smallest $\lambda_I$ and $C_\text{min}$ is the smallest $C_I$, over all $I\in\mathcal{I}$. Since $C_\text{min} \,{\lambda_\text{min}\!}^n \,n! \gg {(n/2)\, \lfloor n/2 \rfloor!}$, the result now follows from Lemma \ref{framework}.
\end{proof}

Since the set $I=2\ZZ$ is 2-periodic, Theorem~\ref{thm:periodic2} applies in particular to alternating permutations, and so it generalizes the observation of Stanley quoted in Section~\ref{sec:independence}.

In the rest of this section we prove another special case of Conjecture~\ref{Conjecture}.

\begin{defn} \label{alternation} The \emph{alternation set} of $I \subseteq [n-1]$ is 
$$\Alt(I)=\{i\in[n-2]\colon|I\cap\{i,i+1\}|=1\},$$ 
that is, the set of $i \in [n-2]$ such that exactly one of $i$ and $i+1$ is in $I$. The \emph{alternation number} of $I$ is $\alt(I) = \card{\Alt(I)}$.
\end{defn}

Note that a given subset of $[n-2]$ is equal to $\Alt(I)$ for exactly two sets $I \subseteq [n-1]$, which are complements of each other. Note also that, for $\Alt(I)$ to be well-defined, we need to specify $n$. The value of $n$ will always be clear from context.

\begin{thm} \label{TheBigOne}
Let $\varepsilon > 0$. For each $n$, define
\[ \mathcal{I}_n = \{I \subseteq [n-1] \colon \alt(I) > n/2 - n^{1-\varepsilon}\}. \]
Then $\lim_{n\to\infty} \max_{I \in \mathcal{I}_n} \left|\frac{n\,\betacyc_n(I)}{\beta_n(I)} - 1\right| = 0$.
\end{thm}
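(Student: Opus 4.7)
My plan is to apply Lemma \ref{framework}, which reduces the theorem to proving
\[
\min_{I \in \mathcal{I}_n} \beta_n(I) \gg (n/2)\,\lfloor n/2\rfloor!.
\]
Using Stirling's formula, $(n/2)\lfloor n/2\rfloor!$ is asymptotically of order $n\,(e/(2n))^{n/2}\,n!$, so the task reduces to establishing a lower bound on $\beta_n(I)$ that decays no faster than $(e/(2n))^{n/2} n!$, uniformly over $I \in \mathcal{I}_n$. Any bound of the form $\beta_n(I) \geq c^n\, n!$ with a constant $c > 0$ would more than suffice.

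First I would translate the hypothesis $\alt(I) > n/2 - n^{1-\varepsilon}$ into a structural statement about the composition $\co(I) = (\mu_1, \ldots, \mu_k)$. Writing $I$ as a $\pm$ sign sequence of length $n-1$ (with $-$ at descent positions and $+$ elsewhere), this sequence has $\alt(I)+1 > n/2 - n^{1-\varepsilon} + 1$ maximal monotone runs, so averaging shows that at most $O(n^{1-\varepsilon})$ positions lie in runs of length $\geq 3$, and hence $\co(I)$ has most parts equal to $1$ or $2$. In this sense, $I$ differs from a truly alternating descent set in only $O(n)$ ``local'' ways, specifically the non-alternation count $n-2-\alt(I)$ is at most $n/2 + n^{1-\varepsilon}$.

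The core step is to prove the lower bound $\beta_n(I) \geq c^n\, n!$ uniformly over $I \in \mathcal{I}_n$. I would take a local perturbation approach: starting from a fully alternating descent set $I_0$ with $\beta_n(I_0) = E_n \sim 2(2/\pi)^{n+1}\,n!$ (Corollary \ref{alternating} provides the asymptotic), I would transform $I$ into $I_0$ by a sequence of one-element modifications, each designed to reduce the non-alternation count by a bounded amount. If each such modification changes $\beta_n$ by at most a uniform multiplicative constant~$C$, then the number of modifications needed is $O(n-2-\alt(I)) = O(n)$, yielding
\[
\beta_n(I) \geq \frac{E_n}{C^{O(n)}} \geq c_1\, c_2^n\, n!
\]
for constants $c_1, c_2 > 0$. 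This bound comfortably exceeds $(n/2)\lfloor n/2\rfloor!$ for all large $n$, completing the argument via Lemma \ref{framework}.

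The main obstacle is proving this per-step bound. A single-element change of $I$ in a generic position can in principle alter $\beta_n(I)$ by an unbounded factor, so the modifications must be chosen carefully — for example, targeting non-alternation positions adjacent to existing alternation clusters, where the local structure is amenable to explicit comparison. To carry this out, I expect to need either the inclusion--exclusion formula $\beta_n(I) = \sum_{J \subseteq I}(-1)^{|I \setminus J|}\binom{n}{\co(J)}$, the determinantal expression for the ribbon SYT count of shape $\co(I)$, or a direct combinatorial/bijective argument that matches permutations with descent set $I$ against permutations with descent set $I'$. Establishing this bounded-ratio step is the crux of the proof.
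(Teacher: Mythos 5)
Your reduction via Lemma \ref{framework} matches the paper's, but the core quantitative claim you set out to prove is false, so the perturbation strategy cannot be repaired. You claim that a bound $\beta_n(I) \ge c^n\, n!$ (for a constant $c>0$, uniformly over $I \in \mathcal{I}_n$) is the target, to be obtained from $E_n \sim 2(2/\pi)^{n+1}n!$ by $O(n)$ modifications each costing a bounded factor $C$. But take $I = \{2,4,\ldots,2k\}$ with $2k = 2\left\lfloor \frac{n/2 - n^{1-\varepsilon}}{2}\right\rfloor$; this $I$ lies in $\mathcal{I}_n$ (indeed $\alt(I) = 2k$... more precisely one checks $\alt(I)>n/2-n^{1-\varepsilon}$ for the right choice of $k$), yet
\[ \beta_n(I) \;\le\; \alpha_n(I) \;=\; \binom{n}{\co(I)} \;=\; \frac{n!}{2^{k}\,(n-2k)!}, \]
and with $n-2k \approx n/2$ this is of order $\frac{n!}{(n/2)!}\,e^{O(n)}$, which is smaller than $c^n n!$ for every fixed $c>0$ (it loses a factor of roughly $(n/2)! \approx n^{n/2}e^{-n/2}$). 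Consequently the total ratio $E_n/\beta_n(I)$ is $n^{\Theta(n)}$, while you only have $\Theta(n)$ modification steps available; the average per-step loss must therefore be polynomial in $n$, not a constant, so no choice of ``careful'' one-element modifications can have a uniformly bounded ratio. The crux step you flag as the main obstacle is genuinely impossible, not merely difficult.

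The fix is to abandon the perturbation-from-$I_0$ picture and instead identify the extremal descent set directly. The paper shows, using Ehrenborg--Mahajan's inequalities (their Corollary 6.5 and Theorem 5.4) together with Stanley's monotonicity of $\beta_n$ in $\Alt(I)$, that $\alt(I) \ge 2k$ forces $\beta_n(I) \ge \beta_n(\{2,4,\ldots,2k\})$ (Lemmas \ref{minimizing} and \ref{minimizing2}); it then proves the explicit lower bound $\beta_n(\{2,4,\ldots,2k\}) \ge \frac12\binom{n}{2k}E_{2k}$ via the identity $\beta_n(\{2,\ldots,2i-2\})+\beta_n(\{2,\ldots,2i\})=\binom{n}{2i}E_{2i}$ and an alternating-sum estimate (Lemma \ref{EulerBound}). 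The quantity $\frac12\binom{n}{2k}E_{2k}$ is of order $\frac{n!}{(n/2)!}(2/\pi)^{n/2}$, which exceeds $(n/2)\lfloor n/2\rfloor! \approx \frac{n!}{(n/2)!}\,2^{-n}n^{3/2}$ only by a factor of roughly $(\pi/8)^{-n/2}$ --- the margin is exponential but delicate, which is why the paper must carry out the final Stirling computation carefully rather than invoking a crude $c^n n!$ bound.
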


Observe that, for a uniformly random subset of $[n-1]$, the expected alternation number is $n/2-1$. While $n/2 - n^{1-\varepsilon}$ is less than this expected value (assuming $0 < \varepsilon < 1$), it is asymptotically the same. Thus the sets in $\mathcal{I}_n$ can be thought of as sets whose alternation number is at least average or a little bit less than average.

The proof of this theorem will use Lemma \ref{framework}, so we will first establish lower bounds for $\beta_n(I)$ when $I\in\mathcal{I}_n$. The following lemma builds on the work of Ehrenborg and Mahajan \cite{EM}.

\begin{lem} \label{minimizing} Let $I \subseteq [n-1]$ and $m = \alt(I)$. Then $\beta_n(I) \ge \beta_n(\{2, 4, \ldots, m\})$ if $m$ is even, and $\beta_n(I) \ge \beta_n(\{1, 3, \ldots, m\})$ if $m$ is odd.
\end{lem}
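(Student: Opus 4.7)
The plan is to show that, among all $I\subseteq[n-1]$ with $\alt(I) = m$, the minimum of $\beta_n(I)$ is attained at the extremal sets $\{2,4,\ldots,m\}$ (for $m$ even) or $\{1,3,\ldots,m\}$ (for $m$ odd), by iteratively applying local moves on $I$ that preserve $\alt(I)$ and weakly decrease $\beta_n(I)$. Because $\beta_n(I)$ depends only on $\co(I)$, this amounts to comparing $\beta_n$ across compositions related by well-chosen moves.

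First I would encode $I$ by its $0$--$1$ indicator string in $[n-1]$, which has length $n-1$ and exactly $m+1$ maximal blocks (so that $\alt(I)$ counts the number of transitions between consecutive bits). The extremal sets in the lemma correspond to the canonical block sequence $(1,1,\ldots,1,\,n-1-m)$, consisting of all blocks of length $1$ except a terminal $0$-block. I would then define a ``straightening'' move: given $I$ with some interior block of length $\geq 2$, transfer one unit of length from that block to the terminal $0$-block (or, in the $m$ odd case, to the initial $1$-block), obtaining a new indicator string with the same number of blocks. This preserves $\alt$ because the block count is preserved, and strictly decreases a suitable potential (e.g.\ the sum of squared interior block lengths), so finitely many applications reach the canonical extremal indicator.

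The core step is to verify the monotonicity $\beta_n(I')\le\beta_n(I)$ for a single straightening move. I would aim to construct an explicit injection from permutations with descent set $I'$ to those with descent set $I$, localizing the rearrangement to the positions affected by the move: the new interior $0$ (or $1$) that was transferred becomes the site where one block gains a bit in $I$, and the permutation values straddling that site can be reshuffled in a controlled way. As an alternative approach, one can use the ribbon standard Young tableau interpretation of $\beta_n(\co(I))$ together with a Jacobi--Trudi-style expansion, or the inclusion--exclusion $\beta_n(I)=\sum_{J\subseteq I}(-1)^{|I\setminus J|}\binom{n}{\co(J)}$ (via Lemma~\ref{lem:multi}) to compare $\beta_n(I')$ and $\beta_n(I)$ by matching up terms over $J\subseteq I\cap I'$.

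The main obstacle is precisely establishing that each straightening move does not increase $\beta_n$. Since the underlying composition changes non-trivially, neither the injection nor the term-by-term inclusion--exclusion comparison is automatic; one has to choose the straightening move carefully so that the identification between tableaux (or permutations) is local and invertible. It is for this reason that we ``build on the work of Ehrenborg and Mahajan'': their extremal inequalities on $\beta_n$ for compositions related by elementary operations supply exactly the comparisons needed, and adapting them to the alternation-preserving setting should complete the argument.
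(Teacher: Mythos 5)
Your overall plan---reduce to the extremal set by a sequence of $\alt$-preserving moves on the block structure of $I$, each of which weakly decreases $\beta_n$---is the right shape, and citing Ehrenborg--Mahajan is the right instinct, but the specific local move you propose is not monotone, and this is exactly the content of the lemma. Concretely, take $n=5$ and compare the block profiles $(3,1)$ and $(2,2)$ (lists of block lengths summing to $n-1$ with $m+1=2$ blocks, so $\alt=1$ in both cases): the profile $(3,1)$ corresponds to $I=\{4\}$ with $\beta_5(\{4\})=4$, while $(2,2)$ corresponds to $I=\{3,4\}$ with $\beta_5(\{3,4\})=6$. Transferring one unit from the length-$3$ block toward the terminal block turns $(3,1)$ into $(2,2)$ and \emph{increases} $\beta_n$ from $4$ to $6$; the chain $(3,1)\to(2,2)\to(1,3)$ has $\beta$-values $4\to 6\to 4$, so no sequence of one-unit transfers can be monotone here. (If instead you read ``interior'' strictly, so that neither the first nor the last block may donate, the procedure cannot even reach the canonical profile $(1^m,\,n-1-m)$ from $(3,1)$.) Your acknowledged fallback---that Ehrenborg--Mahajan's ``extremal inequalities \dots\ supply exactly the comparisons needed''---does not rescue the argument as stated, because their inequalities concern different moves and cannot validate the unit transfer, which is simply false.

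The proof in the paper uses two specific results of Ehrenborg and Mahajan, and the choice of moves matters. Writing $\beta[L]$ for $\beta_n(I)$ with $L=\co(\Alt(I))$, their Corollary 6.5 says that if $L=(1^i,a,1^j,b,n_1,\ldots,n_r)$ with $a,b>1$, then $\beta[L]>\beta[1^i,a+b-1,1^j,1,n_1,\ldots,n_r]$: that is, one \emph{concentrates} the two leading large blocks into a single block $a+b-1$ and a block of size $1$ in one step (the opposite of spreading mass one unit at a time). Iterating this reaches a profile $ (1^i,\,n-m-1,\,1^{m-i})$ with a single large block, and their Theorem 5.4 then gives $\beta[1^i,n-m-1,1^{m-i}]\ge\beta[1^m,n-m-1]$, which is the extremal set of the lemma. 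To repair your write-up you would need to replace the unit-transfer move by this merge move (or prove some other monotone comparison from scratch, e.g.\ via an explicit injection, which you correctly identify as the hard part but do not carry out).
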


\begin{proof}
Adapting the notation of Ehrenborg and Mahajan \cite{EM}, if $L = (l_0, \ldots, l_m)$ is a finite list of positive integers whose sum is $n-1$, then we let $\beta[L]$ denote the number of permutations in $\Perm_n$ that begin with $l_0$ ascents, next have $l_1$ descents, next have $l_2$ ascents, and so on; we can state this more compactly as $\beta[L] = \beta_n(I)$ if $L = \co(\Alt(I))$. Notice that $\beta[L]$ also equals the number of permutations that begin with $l_0$ descents, next have $l_1$ ascents, and so on. In this language, the lemma can be restated as follows: \emph{If $L$ is a list of $m+1$ positive integers whose sum is $n-1$, then $\beta[L] \ge \beta[1^m, n-m-1]$}, where we write $1^i$ to denote a sequence of $i$ ones.

By letting $a$ and $b$ be the first and second entry greater than $1$ in $L$, we can write $L = (1^i, a, 1^j, b, n_1, \ldots, n_r)$, with $a,b > 1$ and $i,j,r \ge 0$. Corollary 6.5 in \cite{EM} then states that $\beta[L] > \beta[1^i, a+b-1, 1^j, 1, n_1, \ldots, n_r]$. Equivalently, $\beta[L]$ decreases when changing the $a$ to $a+b-1$ and changing the $b$ to $1$. Note that these changes preserve both the number of entries and the sum of the entries of the list. Repeating this process yields a list with only one entry greater than $1$, and so there exists $i\ge0$ such that $\beta[L] \ge \beta[1^i, n-m-1, 1^{m-i}]$.
Finally, $\beta[1^i, n-m-1, 1^{m-i}] \ge \beta[1^m, n-m-1]$ by \cite[Thm. 5.4]{EM}.
\end{proof}

\begin{lem} \label{minimizing2}
Let $I \subseteq [n-1]$. If $\alt(I) \ge 2k$, then $\beta_n(I) \ge \beta_n(\{2, 4, \ldots, 2k\})$.
\end{lem}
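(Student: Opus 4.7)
The plan is to use Lemma \ref{minimizing} to reduce the claim to a one-parameter monotonicity statement, then establish that monotonicity by an explicit injection on permutations. Setting $m = \alt(I) \ge 2k$, Lemma \ref{minimizing} gives $\beta_n(I) \ge \beta_n(\{2, 4, \ldots, m\})$ if $m$ is even, and $\beta_n(I) \ge \beta_n(\{1, 3, \ldots, m\})$ if $m$ is odd. The alternation set of $\{2, 4, \ldots, m\}$ (for $m$ even) and of $\{1, 3, \ldots, m\}$ (for $m$ odd) is in both cases $[m]$, so $\co$ of the alternation set is $(1^m, n-m-1)$, and in the bracket notation $\beta[\,\cdot\,]$ from the proof of Lemma \ref{minimizing} both bounds equal $\beta[1^m, n-m-1]$. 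Similarly, $\beta_n(\{2, 4, \ldots, 2k\}) = \beta[1^{2k}, n-2k-1]$. Thus the lemma reduces to the single-step monotonicity
\[ \beta[1^{m'}, n-m'-1] \;\ge\; \beta[1^{m'-1}, n-m'] \quad \text{for } 1 \le m' \le n-1, \]
iterated $m - 2k$ times.

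To prove this single-step monotonicity, I would exhibit an explicit injection from permutations $\pi \in \Perm_n$ of composition $(1^{m'-1}, n-m')$ into permutations of composition $(1^{m'}, n-m'-1)$, defined by
\[ \pi \;\longmapsto\; \pi' := (\pi_1, \ldots, \pi_{m'}, \pi_n, \pi_{n-1}, \ldots, \pi_{m'+1}), \]
that is, keep the first $m'$ entries of $\pi$ and reverse the last $n-m'$. The key observation is that the long monotone run of $\pi$ at positions $m', m'+1, \ldots, n-1$ has some direction $d$ which is necessarily \emph{opposite} to the direction of comparison $m'-1$, because consecutive runs of any composition alternate. Reversing the tail flips the direction of every comparison strictly inside it: the comparisons at positions $m'+1, \ldots, n-1$ of $\pi'$ all have direction $\bar{d}$, whereas the new boundary comparison at position $m'$ (between $\pi_{m'}$ and $\pi_n$) retains direction $d$, since the sequence $\pi_{m'}, \pi_{m'+1}, \ldots, \pi_n$ was monotone in direction $d$ in $\pi$. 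Consequently $\pi'$ alternates for $m'$ comparisons (one more than $\pi$) and then has a single monotone run of length $n-m'-1$ in direction $\bar{d}$, giving composition $(1^{m'}, n-m'-1)$. Injectivity is immediate, since the map is invertible on its image by reversing the tail again.

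The step requiring the most care is verifying that $\pi'$ has composition exactly $(1^{m'}, n-m'-1)$---in particular, that the new comparison at position $m'$ continues the alternation rather than starting a fresh run in the same direction as the previous one. This reduces to the parity observation that in the composition $(1^{m'-1}, n-m')$ the long-run direction is opposite to the immediately preceding step, which is direct from how runs of a composition are indexed. Iterating the injection $m - 2k$ times then yields $\beta_n(I) \ge \beta[1^m, n-m-1] \ge \beta[1^{2k}, n-2k-1] = \beta_n(\{2, 4, \ldots, 2k\})$, as claimed.
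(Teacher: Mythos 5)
Your proof is correct, and its first step coincides with the paper's: both invoke Lemma \ref{minimizing} to reduce to comparing $\beta[1^m,n-m-1]$ with $\beta[1^{2k},n-2k-1]$. Where you diverge is the second step. The paper disposes of it in one line by observing that $\Alt(\{2,4,\ldots,m\}) = [m] \supseteq [2k] = \Alt(\{2,4,\ldots,2k\})$ and citing the monotonicity result \cite[Prop.\ 1.6.4]{Stanley1} (if $\Alt(J)\subseteq\Alt(J')$ then $\beta_n(J)\le\beta_n(J')$), whereas you prove the special case you need from scratch via the tail-reversal injection $\pi\mapsto(\pi_1,\ldots,\pi_{m'},\pi_n,\ldots,\pi_{m'+1})$. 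I checked your direction bookkeeping and it is right: comparisons $1,\ldots,m'-1$ are untouched; the new comparison at position $m'$ inherits the direction $d$ of the long run because $\pi_{m'},\ldots,\pi_n$ is monotone in direction $d$; comparison $j$ of $\pi'$ for $j\ge m'+1$ is the reversal of comparison $n+m'-j$ of $\pi$ and hence has direction $\bar d$; and since the run preceding the long run has direction $\bar d$, the alternation extends by exactly one step, giving composition $(1^{m'},n-m'-1)$ with the same initial direction. Injectivity is clear, and the iteration is legitimate since $m\le n-2$ guarantees every intermediate composition has last part at least $1$. The trade-off: the paper's route is shorter but leans on an external monotonicity theorem in its full generality, while yours is self-contained and makes the relevant instance of that monotonicity combinatorially transparent; the cost is that your injection only handles the chain $[m'-1]\subseteq[m']$ of alternation sets and would not by itself replace the general proposition.
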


\begin{proof}
Set $m = \alt(I)$; without loss of generality, assume that $m$ is even. By Lemma \ref{minimizing}, $\beta_n(I) \ge \beta_n(\{2, 4, \ldots, m\})$. Observe that $$\Alt(\{2, 4, \ldots, m\}) = \{1, \ldots, m\} \supseteq  \{1, \ldots, 2k\} =\Alt(\{2,4,\ldots,2k\}),$$
since $m \ge 2k$. We conclude that $\beta_n(\{2,4,\ldots, m\}) \ge \beta_n(\{2, 4, \ldots, 2k\})$ by using \cite[Prop.\ 1.6.4]{Stanley1}, which states that if $\Alt(J) \subseteq \Alt(J')$, then $\beta_n(J) \le \beta_n(J')$.
\end{proof}

\begin{lem} \label{EulerBound}
If $2k \le n/2$, then $\beta_n(\{2, 4, \ldots, 2k\}) \ge \frac12 \binom{n}{2k} E_{2k}$.
\end{lem}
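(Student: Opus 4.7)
The plan is to construct explicitly a large family of permutations of $[n]$ whose descent set is exactly $\{2,4,\ldots,2k\}$, by prescribing their first $2k$ entries to form an up-down alternating permutation on some carefully chosen $2k$-element subset, and forcing the remaining entries to appear in increasing order. The matching with $\binom{n}{2k}E_{2k}$ will then come from counting choices of the subset, and the factor $\tfrac12$ will fall out of the hypothesis $2k\le n/2$.

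Concretely, I will count pairs $(A,\sigma)$ where $A$ is a $2k$-subset of $\{2,3,\ldots,n\}$ (so $A$ does not contain the element $1$) and $\sigma$ is an up-down alternating permutation of $A$ of length $2k$ (i.e., $\sigma_1<\sigma_2>\sigma_3<\cdots<\sigma_{2k}$). Each such pair produces a permutation $\pi\in S_n$ by setting $\pi(j)=\sigma_j$ for $1\le j\le 2k$ and listing $[n]\setminus A$ in increasing order in the remaining positions $2k+1,\ldots,n$.

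The main verification is that every such $\pi$ has descent set exactly $\{2,4,\ldots,2k\}$. The up-down structure of $\sigma$ gives ascents at positions $1,3,\ldots,2k-1$ and descents at $2,4,\ldots,2k-2$, while positions $2k+1,\ldots,n-1$ are ascents because $[n]\setminus A$ appears in increasing order. The only delicate point is the descent at position $2k$, i.e., $\pi(2k)>\pi(2k+1)$. This is where the restriction $1\notin A$ is used: it forces $\pi(2k+1)=\min([n]\setminus A)=1$, whereas $\pi(2k)\in A\subseteq\{2,\ldots,n\}$, so $\pi(2k)\ge 2>1$. Distinct pairs $(A,\sigma)$ give distinct permutations since $A$ and $\sigma$ can be read off from the first $2k$ entries, so
\[ \beta_n(\{2,4,\ldots,2k\})\ge \binom{n-1}{2k}E_{2k}. \]

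The proof concludes with the elementary comparison
\[ \binom{n-1}{2k}=\binom{n}{2k}\cdot\frac{n-2k}{n}\ge\frac12\binom{n}{2k}, \]
which holds precisely when $n-2k\ge n/2$, i.e., $2k\le n/2$. There is no substantive obstacle here; the only subtlety is choosing the construction so that the descent at position $2k$ is automatic, which is exactly why one restricts to subsets $A$ avoiding the value $1$, and this restriction is what costs a factor of $(n-2k)/n$ compared to choosing an unrestricted $2k$-subset of $[n]$.
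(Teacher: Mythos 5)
Your proof is correct, and it takes a genuinely different (and more elementary) route than the paper's. The paper proves the exact identity $\beta_n(\{2,\ldots,2i-2\}) + \beta_n(\{2,\ldots,2i\}) = \binom{n}{2i}E_{2i}$ by the same basic construction you use --- an alternating prefix of length $2i$ followed by the remaining values in increasing order --- but without controlling whether position $2i$ is a descent; it then telescopes this into the alternating sum $\beta_n(\{2,4,\ldots,2k\}) = \sum_{i=0}^k (-1)^{k-i}\binom{n}{2i}E_{2i}$ and uses the hypothesis $2k \le n/2$ together with $E_{2k-2}/E_{2k}\le \tfrac12$ to show the last term dominates. You short-circuit all of that by forcing the descent at position $2k$: excluding the value $1$ from the prefix guarantees $\pi(2k+1)=1$, so the descent set is exactly $\{2,4,\ldots,2k\}$ and no inclusion--exclusion is needed. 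The injectivity is clear, the count $\binom{n-1}{2k}E_{2k}$ is right, and the final comparison $\binom{n-1}{2k}=\frac{n-2k}{n}\binom{n}{2k}\ge\tfrac12\binom{n}{2k}$ is exactly where $2k\le n/2$ enters --- pleasingly parallel to how the paper spends the same hypothesis. What the paper's approach buys is the exact alternating-sum formula (an identity of independent interest, and the mechanism behind Equation (5.1) there); what yours buys is a self-contained combinatorial injection with no cancellation to manage. The only caveat is the degenerate case $k=0$, where ``the descent at position $2k$'' is vacuous, but there the claim is trivial ($\beta_n(\varnothing)=1\ge\tfrac12$), just as the paper notes.
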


 \begin{proof}
The lemma is trivial for $k=0$ and easily verified for $k=1$, so we assume $k\ge2$.

First we show that 
\begin{equation}\label{eq:E2i} \beta_n(\{2, 4, \ldots, 2i-2\}) + \beta_n(\{2, 4, \ldots, 2i\}) = \binom{n}{2i} E_{2i}, 
\end{equation}
for $1 \le i \le (n-1)/2$. Indeed, a permutation whose descent set is either $\{2, 4, \ldots, 2i-2\}$ or $\{2, 4, \ldots, 2i\}$ is determined by choosing $2i$ elements of $[n]$ for the first $2i$ positions (which can be done in $\binom{n}{2i}$ ways), arranging these elements into an alternating (up--down) permutation (which can be done in $E_{2i}$ ways), and finally arranging the elements in the last $n-2i$ positions in increasing order. 

From Equation~\eqref{eq:E2i}, it follows that
\begin{align*}
 \beta_n(\{2, 4, \ldots, 2k\}) &= (-1)^k\beta_n(\varnothing) + \sum_{i=1}^k (-1)^{k-i} \left[ \beta_n(\{2, 4, \ldots, 2i-2\}) + \beta_n(\{2, 4, \ldots, 2i\}) \right] \\
&
= \sum_{i=0}^k (-1)^{k-i} \binom{n}{2i} E_{2i}.
\end{align*}
The terms of this alternating sum are increasing in absolute value because $2k \le n/2$, so
\begin{align*}
\beta_n(\{2, 4, \ldots, 2k\}) &\ge \binom{n}{2k} E_{2k} - \binom{n}{2k-2} E_{2k-2} \\
&= \binom{n}{2k} E_{2k} \left(1 - \frac{2k(2k-1)E_{2k-2}}{(n-2k+1)(n-2k+2)E_{2k}} \right),
\end{align*}
and now the lemma follows from the fact that 
\[ \frac{2k(2k-1)E_{2k-2}}{(n-2k+1)(n-2k+2)E_{2k}} \le \frac{E_{2k-2}}{E_{2k}}\le\frac{1}{2}.\qedhere \]
\end{proof}

 \begin{proof}[Proof of Theorem \ref{TheBigOne}.] Let $I \in \mathcal{I}_n$, meaning that $I \subseteq [n-1]$ and $\alt(I) > n/2 - n^{1-\varepsilon}$. Set $k = k(n)= \left\lfloor \frac{n/2 - n^{1-\varepsilon}}{2} \right\rfloor$. Then $\alt(I) > n/2 - n^{1-\varepsilon} \ge 2k$, so by Lemma \ref{minimizing2} we have $\beta_n(I) \ge \beta_n(\{2, 4, \ldots, 2k\})$. Next, from Lemma \ref{EulerBound}, we obtain $\beta_n(I) \ge \frac12 \binom{n}{2k} E_{2k}$. This holds for all $I \in \mathcal{I}_n$, so in fact $\min_{I \in \mathcal{I}_n} \beta_n(I) \ge \frac12 \binom{n}{2k} E_{2k}$. Using Lemma \ref{framework}, the theorem is proved once we show that
$$ \frac12 \binom{n}{2k} E_{2k} \gg  (n/2)\,\left\lfloor n/2\right\rfloor!.$$
Using that $E_{2k} \sim \frac{2\,(2k)!}{(\pi/2)^{2k+1}}$ (see \cite[Note IV.35, p.\ 269]{FS}) and Stirling's formula, we have
\begin{align*}
& \left.(n/2)\,\left\lfloor n/2\right\rfloor! \middle/ \frac12 \binom{n}{2k} E_{2k} \right. 
= \frac{(n/2)\,\left\lfloor n/2\right\rfloor!\,(2k)!\,(n-2k)!}{n!} \,\frac{2}{E_{2k}} \\
&\sim \frac{(n/2)\,\left\lfloor n/2\right\rfloor!\,(n-2k)!\,(\pi/2)^{2k+1}}{n!}
\sim \frac{(n/2)^{n/2+1}\,\sqrt{\pi(n-2k)}\,(n-2k)^{n-2k}\,(\pi/2)^{2k+1}}{n^n\,e^{n/2-2k}} \\
&\sim \frac{(n/2)^{n/2+1}\,\sqrt{\pi(n/2+n^{1-\varepsilon})}\,(n/2+n^{1-\varepsilon})^{n/2+n^{1-\varepsilon}}\,(\pi/2)^{n/2-n^{1-\varepsilon}+1}}{n^n\,e^{n^{1-\varepsilon}}} \\
&\sim \pi^{3/2}2^{-5/2}\, n^{3/2} {\left(2e^{-1}\pi^{-1}\right)}^{\!n^{1-\varepsilon}} {\left(2^{-1}\pi^{1/2}\right)}^{\!n} \,n^{n^{1-\varepsilon}} \left(1/2+n^{-\varepsilon}\right)^{n^{1-\varepsilon}} \left(1/2+n^{-\varepsilon}\right)^{n/2}.
\end{align*}
Since $1-\varepsilon < 1$, the factor of ${\left(2^{-1}\pi^{1/2}\right)}^{\!n}$ dominates as $n\to\infty$, so we conclude that the limit is $0$, noting that $2^{-1}\pi^{1/2} < 1$.
\end{proof}

Theorem~\ref{TheBigOne} shows that, when we restrict to subsets $I \subseteq [n-1]$ with $\alt(I)>n/2 - n^{1-\varepsilon}$, the properties of having descent set $I$ and being an $n$-cycle are asymptotically independent. Since $2\ZZ\cap[n-1]$ has alternation number $n-2$, this result applies to alternating permutations, thus generalizing the observation of Stanley quoted in Section~\ref{sec:independence}.

The reason we go to the trouble of using $n/2 - n^{1-\varepsilon}$ instead of simply $n/2$ is so that we can get the following result, which shows that $\frac{\betacyc_n(I)}{\beta_n(I)} \sim \frac{1}{n}$ for almost all $I \subseteq [n-1]$ in a precise sense.

\begin{cor} \label{AlmostAll} For each $n$, there is a collection $\mathcal{I}_n$ of subsets of $[n-1]$ such that $\left|\mathcal{I}_n\right|\sim 2^{n-1}$ and $\lim_{n\to\infty} \max_{I \in \mathcal{I}_n} \left|\frac{n\,\betacyc_n(I)}{\beta_n(I)} - 1\right| = 0$.
\end{cor}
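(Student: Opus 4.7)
The plan is to take $\mathcal{I}_n$ to be precisely the family supplied by Theorem~\ref{TheBigOne} for some fixed $\varepsilon \in (0, 1/2)$, say $\varepsilon = 1/4$. With this choice, the asymptotic ratio statement $\max_{I\in\mathcal{I}_n}|n\betacyc_n(I)/\beta_n(I) - 1| \to 0$ is immediate, and the problem reduces to showing that the fraction of subsets $I \subseteq [n-1]$ failing the alternation-number condition is $o(1)$, i.e.\ that $|\mathcal{I}_n|/2^{n-1} \to 1$.

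To count, I would run a standard second-moment argument on a uniformly random $I \subseteq [n-1]$. Write $Y_j = \mathbf{1}[j \in I]$ for independent Bernoulli$(1/2)$ variables and set $X_i = Y_i \oplus Y_{i+1}$ for $i \in [n-2]$, so that $\alt(I) = X_1 + \cdots + X_{n-2}$. Each $X_i$ is Bernoulli$(1/2)$, giving $E[\alt(I)] = (n-2)/2$. The key observation is that the $X_i$ are pairwise independent: for $|i - j| \ge 2$ this is automatic from disjoint supports, and for adjacent indices it is a short check from the eight equally likely outcomes of $(Y_i, Y_{i+1}, Y_{i+2})$ that $(X_i, X_{i+1})$ is uniform on $\{0,1\}^2$. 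Pairwise independence is all that is needed to conclude $\Var(\alt(I)) = (n-2)/4$.

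With these two moments in hand, Chebyshev's inequality gives
$$P\bigl(\alt(I) \le n/2 - n^{1-\varepsilon}\bigr) \;\le\; P\bigl(|\alt(I) - (n-2)/2| \ge n^{1-\varepsilon} - 1\bigr) \;=\; O\!\left(n^{2\varepsilon - 1}\right),$$
which tends to $0$ provided $\varepsilon < 1/2$. Therefore the number of $I$ violating the alternation condition is $o(2^{n-1})$, so $|\mathcal{I}_n| \sim 2^{n-1}$, and combined with Theorem~\ref{TheBigOne} this completes the proof.

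I do not anticipate a serious obstacle; the only delicate point is that the $X_i$ are only \emph{pairwise} (not mutually) independent, so one cannot use sharper exponential concentration bounds like Hoeffding off the shelf. Since Chebyshev already suffices for the crude bound $n/2 - n^{1-\varepsilon}$ that Theorem~\ref{TheBigOne} demands, this is a non-issue, and in fact explains why the threshold in Theorem~\ref{TheBigOne} was set slightly below the mean $n/2$ rather than at $n/2$ itself.
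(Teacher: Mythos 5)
Your proof is correct and follows the same overall strategy as the paper: take $\mathcal{I}_n = \{I \subseteq [n-1] \colon \alt(I) > n/2 - n^{1-\varepsilon}\}$ for a fixed $\varepsilon \in (0,1/2)$, invoke Theorem~\ref{TheBigOne} for the ratio statement, and show that a uniformly random $I$ satisfies the alternation condition with probability tending to $1$. The only difference is in how the last step is carried out. The paper observes that $I \mapsto \Alt(I)$ is exactly two-to-one onto the subsets of $[n-2]$, so $\alt(I)$ is \emph{exactly} Binomial$(n-2,1/2)$, and then applies the Central Limit Theorem; you instead compute the first two moments of $\alt(I) = \sum_i X_i$ with $X_i = Y_i \oplus Y_{i+1}$ directly and apply Chebyshev. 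Both are valid, and your Chebyshev bound $O(n^{2\varepsilon-1})$ is even slightly more quantitative than the paper's CLT appeal. One factual remark in your write-up is wrong, though it does not affect the argument: the $X_i$ are in fact \emph{mutually} independent, not merely pairwise independent, precisely because the two-to-one map above sends the uniform distribution on $2^{[n-1]}$ to the uniform distribution on $\{0,1\}^{n-2}$; so exponential concentration bounds would have been available after all. Relatedly, the threshold sits below $n/2$ not because of any limitation on concentration, but because $\alt(I)$ is centered at $(n-2)/2$, so a cutoff at exactly $n/2$ would capture only about half of all subsets rather than almost all of them.
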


\begin{proof}
Fix a real number $\varepsilon \in \left(0, \smallfrac12\right)$, and let $\mathcal{I}_n = \{I \subseteq [n-1] \colon \alt(I) > n/2 - n^{1-\varepsilon}\}$. 
By Theorem \ref{TheBigOne}, it is enough to show that $\left|\mathcal{I}_n\right|\sim 2^{n-1}$.

Note that the alternation set of a uniformly random subset of $[n-1]$ is a uniformly random subset of $[n-2]$.
Let $A_n\subseteq [n-2]$ be uniformly random. Then $|A_n|$ has a binomial distribution, and the Central-Limit Theorem says that the random variables
\[ X_n = \frac{|A_n| - n/2}{\sqrt{n}} \]
converge to a normal distribution centered at $0$, as $n \to \infty$.

In particular, $\Pr[|A_n| > n/2 - n^{1-\varepsilon}]=\Pr[X_n > -n^{1/2-\varepsilon}]$ converges to $1$ as $n \to \infty$, since $-n^{1/2-\varepsilon} \to -\infty$.
Therefore, the fraction of subsets of $[n-2]$ that have size $> n/2 - n^{1-\varepsilon}$ converges to $1$, and so the fraction of subsets $I \subseteq [n-1]$ with $\alt(I) > n/2 - n^{1-\varepsilon}$ converges to~$1$.
\end{proof}

\section{Avoiding a monotone consecutive pattern} \label{sec:monotone}

\subsection{Monotone consecutive patterns and symmetric functions}
\label{MonotoneSubsection1}

The main result of Gessel and Reutenauer \cite{GR} can also be stated in terms of symmetric functions. For a composition $\mu \vDash n$, let $h_\mu$ (resp.\ $e_\mu$) denote the homogeneous (resp.\ elementary) symmetric function associated with $\mu$. For a skew shape $\nu/\rho$, let $s_{\nu/\rho}$ denote the associated skew Schur function. Let $\langle \cdot, \cdot \rangle$ denote the usual inner product on symmetric functions.

We draw Young diagrams in English notation. For $I \subseteq [n-1]$, writing $\co(I) = (\mu_1, \ldots, \mu_k)$, let $B_I$ denote the border strip with $\mu_j$ blocks in the $j$th row from the bottom (see Figure \ref{example}).
For the sake of convenience, we will write $s_I$ instead of $s_{B_I}$ to denote the skew Schur function of degree $n$ associated with the skew shape $B_I$. Equivalently,
\[ s_I = \sum_{\substack{a_i \le a_{i+1} \text{ if } i \not\in I \\ a_i > a_{i+1} \text{ if } i \in I}} x_{a_1} \cdots x_{a_n}. \]
(This notation depends on $n$, but we suppress the $n$ for the sake of convenience.) Hence,
\begin{equation}
\left\langle h_1^n, s_I \right\rangle = [x_1\cdots x_n] s_I = \sum_{\substack{a_1 \ldots a_n \in S_n \\ a_i \le a_{i+1} \text{ if } i \not\in I \\ a_i > a_{i+1} \text{ if } i \in I}} 1
= \beta_n(I). \label{eq:sI}
\end{equation}

\begin{figure}[b]
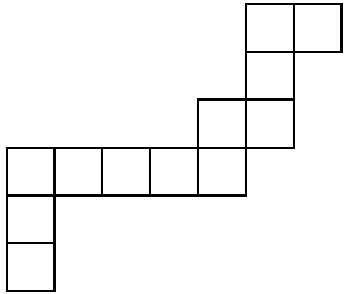

\[ \ydiagram{5+2,5+1,4+2,5,1,1} \]
\caption{\label{example}The border strip $B_I$ with $I = \{1, 2, 7, 9, 10\} \subseteq [11]$.}
\end{figure}

Recall the definition of $L_\lambda$ from Equation~\eqref{eq:L}. The following result is equivalent to Theorem \ref{GRcor}. 

\begin{thm}[{\cite[Thm.\ 2.1]{GR}}]\label{GRmain} For any $\lambda \vdash n$ and $I \subseteq [n-1]$, the number of $\pi \in \Perm_n$ with $\type(\pi) = \lambda$ and $D(\pi) = I$ is equal to $\langle L_\lambda, s_{I}\rangle$. In particular, $\betacyc(I) = \langle L_n, s_{I}\rangle$.
\end{thm}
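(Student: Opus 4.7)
The plan is to reduce Theorem~\ref{GRmain} to Theorem~\ref{GRcor} by inclusion--exclusion on the descent set, then recognize that the inner product $\langle L_\lambda, s_I\rangle$ admits the matching inclusion--exclusion expansion once $s_I$ is written in the complete homogeneous basis. Concretely, Theorem~\ref{GRcor} counts $\pi$ with $\type(\pi)=\lambda$ and $D(\pi)\subseteq J$ as $a_{\lambda,\co(J)}$; applying inclusion--exclusion gives
$$\#\{\pi\in\Perm_n : \type(\pi)=\lambda,\ D(\pi)=I\} = \sum_{J\subseteq I}(-1)^{|I|-|J|}\,a_{\lambda,\co(J)},$$
so it suffices to show that $\langle L_\lambda, s_I\rangle$ equals this same alternating sum.

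For this I would invoke two standard symmetric-function facts. First, the ribbon Schur function admits the expansion
$$s_I \;=\; \sum_{J\subseteq I}(-1)^{|I|-|J|}\,h_{\co(J)},$$
which is the special case of the Jacobi--Trudi identity for border strips; it can alternatively be verified directly from the definition of $s_I$ displayed just before~\eqref{eq:sI} by inclusion--exclusion on its strict/weak inequalities (each $J\subseteq I$ corresponds to demanding strict descents at positions in $J$ and no condition on the complement). Second, using Hall duality $\langle h_\nu, m_\rho\rangle = \delta_{\nu,\rho}$ together with the monomial-basis rewriting of Equation~\eqref{eq:L} (collecting the weak compositions $\mu$ in each $S_\infty$-orbit, which is legitimate because $L_\lambda$ is symmetric), one obtains $\langle L_\lambda, h_\nu\rangle = a_{\lambda,\nu}$ for every composition $\nu$.

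Substituting the ribbon expansion of $s_I$ into $\langle L_\lambda, s_I\rangle$ and applying this coefficient identity termwise then yields exactly $\sum_{J\subseteq I}(-1)^{|I|-|J|}\,a_{\lambda,\co(J)}$, completing the proof. The second sentence of the theorem, concerning $\betacyc_n(I)$, is the special case $\lambda=(n)$, since $L_{(n)}=L_n$ and $a_{n,\co(J)}=\alphacyc_n(J)$ by Theorem~\ref{GRcor}. The main obstacle is really the ribbon-to-$h$ expansion, which is not proved earlier in the paper; I would either cite a standard reference (e.g.\ \cite[Ex.\ 7.56]{Stanley2}) or give the short direct inclusion--exclusion argument sketched above, as all other ingredients are essentially assembled already in Section~\ref{NotationSection}.
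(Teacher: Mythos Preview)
The paper does not supply its own proof of this statement; it simply cites \cite[Thm.\ 2.1]{GR} and remarks that Theorem~\ref{GRmain} is equivalent to Theorem~\ref{GRcor}. Your argument is exactly a correct verification of that equivalence in one direction: inclusion--exclusion turns the ``$D(\pi)\subseteq J$'' count $a_{\lambda,\co(J)}$ from Theorem~\ref{GRcor} into the ``$D(\pi)=I$'' count, and the ribbon expansion $s_I=\sum_{J\subseteq I}(-1)^{|I|-|J|}h_{\co(J)}$ together with $\langle L_\lambda,h_\nu\rangle=a_{\lambda,\nu}$ matches it to $\langle L_\lambda,s_I\rangle$. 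Both ingredients you flag as external (the Jacobi--Trudi identity for ribbons and the Hall-duality computation) are standard and your sketches of them are sound, so the proposal is fine; just be aware that you are filling in a proof the paper deliberately outsourced to \cite{GR} rather than reconstructing an argument that appears in the text.
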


Fix an integer $k \ge 2$. Some of our notation will depend on $k$, but we will suppress the $k$ for the sake of convenience.

We say a permutation \emph{avoids the consecutive pattern $\underline{1\dots k}$} if it does not have $k-1$ consecutive ascents. 
Consecutive patterns in permutations were introduced in~\cite{EliNoy}; see also the recent survey~\cite{Eli_survey}.
Defining
\[ G_n = \{I \subseteq [n-1] \colon \text{every part of $\co(I)$ has size $< k$}\}, \]
the number of permutations in $\Perm_n$ that avoid $\underline{1\dots k}$ is $\sum_{I \in G_n} \beta_n(I)$.

Define the symmetric function $g_n = \sum_{I \in G_n} s_{I}$.
Although we will not use this fact in the proofs below, we remark that $g_n$ is the quasi-symmetric generating function for the set of $\pi \in \Perm_n$ such that $\pi^{-1}$ avoids the consecutive pattern $\underline{1\dots k}$ (see \cite[Sec.\ 3,4]{GR} for definitions).

\begin{defn} \label{defee} For $r \ge 0$, define
\[ \ee_r = \begin{cases} 1 & \text{if $r \equiv 1 \pmod{k}$}, \\
-1 & \text{if $r \equiv 0 \pmod{k}$}, \\
0 & \text{else}. \end{cases} \]
For a composition $\mu = (\mu_1, \ldots, \mu_t)$, define $\ee_\mu = \ee_{\mu_1} \cdots \ee_{\mu_t}$.
\end{defn}

Our next theorem provides the fundamental facts about $g_n$.

\begin{thm} \label{symmetricfunction}
(a) The number of $\pi \in \Perm_n$ avoiding $\underline{1\dots k}$ is equal to $\left\langle h_1^n, g_n \right\rangle$.

(b) For $\lambda \vdash n$, the number of such $\pi$ with $\type(\pi) = \lambda$ is equal to $\left\langle L_\lambda, g_n \right\rangle$.

(c) $g_n = \sum_{\mu \vDash n} \ee_\mu h_\mu$, and
\[ \sum_{n\ge0} g_n = \left(1 - \sum_{r\ge1} \ee_r h_r\right)^{-1} = \frac{1}{1 - h_1 + h_k - h_{k+1} + h_{2k} - h_{2k+1} + \cdots}. \]
\end{thm}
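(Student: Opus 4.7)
My plan is to treat (a) and (b) together via a simple descent-set characterization of $\underline{1\ldots k}$-avoidance, and to derive (c) by expanding $\sum_\mu \ee_\mu h_\mu$ in the basis $\{s_I\}$ and performing a short inclusion--exclusion.

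For (a) and (b), observe that a permutation $\pi \in \Perm_n$ avoids the consecutive pattern $\underline{1\ldots k}$ if and only if it has no $k-1$ consecutive ascents; since a run of $\nu$ consecutive positions in $\pi$ (i.e., a part of size $\nu$ in $\co(D(\pi))$) contains exactly $\nu-1$ consecutive ascents, this is equivalent to every part of $\co(D(\pi))$ being strictly less than $k$, i.e., $D(\pi) \in G_n$. Hence the number of $\underline{1\ldots k}$-avoiders in $\Perm_n$ equals $\sum_{I \in G_n} \beta_n(I)$, and the analogous count refined by $\type(\pi) = \lambda$ equals $\sum_{I \in G_n} \#\{\pi \colon \type(\pi) = \lambda, D(\pi) = I\}$. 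Combining with equation~\eqref{eq:sI} for (a) and Theorem~\ref{GRmain} for (b), and recalling $g_n = \sum_{I \in G_n} s_I$, both parts follow.

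For (c), I first invoke the standard expansion $h_\mu = \sum_{I \subseteq S(\mu)} s_I$, where $S(\mu) := \{\mu_1, \mu_1+\mu_2, \ldots\}$. This follows from noting that $h_\mu$ enumerates sequences $a_1, \ldots, a_n$ with $a_i \le a_{i+1}$ for each $i \notin S(\mu)$, and classifying these sequences by the subset $I \subseteq S(\mu)$ of positions where $a_i > a_{i+1}$ matches exactly the definition of $s_I$. Substituting into $\sum_{\mu \vDash n} \ee_\mu h_\mu$ and swapping the order of summation yields
\[ \sum_{\mu \vDash n} \ee_\mu h_\mu = \sum_{I \subseteq [n-1]} c_I \, s_I, \quad \text{where } c_I := \sum_{\mu \vDash n :\, S(\mu) \supseteq I} \ee_\mu. \]
The compositions $\mu$ with $S(\mu) \supseteq I$ are precisely the refinements of $\co(I) = (\nu_1, \ldots, \nu_t)$, so $c_I = \prod_{j=1}^t F(\nu_j)$, where $F(r) := \sum_{\alpha \vDash r} \ee_\alpha$. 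A short generating-function calculation based on the telescoping
\[ 1 - \sum_{s \ge 1} \ee_s\, t^s = 1 - t + t^k - t^{k+1} + t^{2k} - \cdots = \frac{1-t}{1-t^k} \]
gives $\sum_{r \ge 0} F(r)\, t^r = (1-t^k)/(1-t)$, hence $F(r) = 1$ for $0 \le r < k$ and $F(r) = 0$ for $r \ge k$. Therefore $c_I$ equals $1$ if $I \in G_n$ and $0$ otherwise, proving $g_n = \sum_\mu \ee_\mu h_\mu$.

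The generating function identity then follows by summing over $n$ and over compositions of all lengths:
\[ \sum_{n \ge 0} g_n \;=\; \sum_{t \ge 0} \Bigl( \sum_{r \ge 1} \ee_r h_r \Bigr)^{\!t} \;=\; \Bigl( 1 - \sum_{r \ge 1} \ee_r h_r \Bigr)^{\!-1}, \]
with the explicit denominator obtained by substituting the values of $\ee_r$. The only step that requires any real work is the telescoping evaluation of $1 - \sum_{s \ge 1} \ee_s t^s$, which I expect to be the (mild) main obstacle; everything else is bookkeeping.
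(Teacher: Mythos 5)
Your proof is correct. Parts (a) and (b) follow the same route as the paper: the equivalence between avoiding $\underline{1\dots k}$ and having $D(\pi)\in G_n$ is already built into the paper's definition of $G_n$, and the rest is exactly Equation~\eqref{eq:sI} and Theorem~\ref{GRmain}. The substantive difference is in part (c), which the paper does not prove but instead cites from Gessel's Ph.D.\ thesis; you supply a self-contained derivation. Your argument is sound: the ribbon expansion $h_\mu=\sum_{I\subseteq S(\mu)}s_I$ is the standard classification of the monomials of $h_\mu$ by their descent positions, the compositions refining $\co(I)=(\nu_1,\dots,\nu_t)$ are exactly those with $S(\mu)\supseteq I$, and since $\ee_\mu$ is multiplicative over parts the coefficient factors as $c_I=\prod_j F(\nu_j)$ with $F(r)=\sum_{\alpha\vDash r}\ee_\alpha$. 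The telescoping identity $1-\sum_{s\ge1}\ee_s t^s=(1-t)/(1-t^k)$ gives $\sum_r F(r)t^r=1+t+\cdots+t^{k-1}$, so $c_I$ equals $1$ precisely when every part of $\co(I)$ is less than $k$, i.e.\ when $I\in G_n$; the generating-function identity is then the formal geometric-series summation over compositions. What your approach buys is independence from the external reference (and it makes transparent why the signs $1,-1,0$ in the definition of $\ee_r$ are the right ones); the cost is the extra bookkeeping that the paper sidesteps by citation.
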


\begin{proof}
Part (a) follows from Equation \eqref{eq:sI}; part (b) follows from Theorem \ref{GRmain}; part (c) is found in Gessel's Ph.D. thesis \cite[Sec.\ 5.2, Example 3]{GesselThesis}.
\end{proof}

The expression in Theorem~\ref{symmetricfunction}(a) will be useful in Section~\ref{cyc123}, but the generating function for 
permutations avoiding $\underline{1\dots k}$ was already known~\cite{DB,EliNoy}; see Equation~\eqref{eq:monotone} for the case $k=3$.

We briefly turn our attention to permutations that avoid the consecutive pattern $\underline{k\dots 1}$, that is, those that do not have $k-1$ consecutive descents. Define the symmetric function $\widetilde{g}_n = \sum_{I \in G_n} s_{\overline{I}}$, where $\overline{I} = [n-1] \smallsetminus I$. We remark that $\widetilde{g}_n$ is the quasi-symmetric generating function for the set of $\pi \in \Perm_n$ such that $\pi^{-1}$ avoids the consecutive pattern $\underline{k\dots 1}$.

Let $\omega$ denote the usual involution on symmetric functions, that is, the ring automorphism defined by $\omega(h_r) = e_r$. Since $\omega(s_{I}) = s_{\overline{I}}$, we have $\omega(g_n) = \widetilde{g}_n$, so from Theorem \ref{symmetricfunction} we obtain:

\begin{thm} \label{omega} (a) The number of $\pi \in \Perm_n$ avoiding $\underline{k\dots 1}$ is equal to $\left\langle h_1^n, \widetilde{g}_n \right\rangle$.

(b) For $\lambda \vdash n$, the number of such $\pi$ with $\type(\pi) = \lambda$ is equal to $\left\langle L_\lambda, \widetilde{g}_n \right\rangle$.

(c) $\widetilde{g}_n = \sum_{\mu \vDash n} \ee_\mu e_\mu$, and
\[ \sum_{n\ge0} \widetilde{g}_n = \left(1 - \sum_{r\ge1} \ee_r e_r\right)^{-1} = \frac{1}{1 - e_1 + e_k - e_{k+1} + e_{2k} - e_{2k+1} + \cdots}. \tag*{$\square$} \]
\end{thm}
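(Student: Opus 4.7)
The plan is to obtain Theorem~\ref{omega} directly from Theorem~\ref{symmetricfunction} by applying the ring involution $\omega$ on symmetric functions. The key identity to establish is $\omega(g_n) = \widetilde{g}_n$. Since $\omega$ sends a skew Schur function $s_{\lambda/\mu}$ to $s_{(\lambda/\mu)'}$, and since the conjugate of the border strip $B_I$ is the border strip $B_{\overline{I}}$---transposing the strip swaps the ``right'' and ``down'' moves that trace out its boundary, which interchanges the composition $\co(I)$ with $\co(\overline{I})$---we get $\omega(s_I) = s_{\overline{I}}$. Summing over $I \in G_n$ yields $\omega(g_n) = \sum_{I \in G_n} s_{\overline{I}} = \widetilde{g}_n$.

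For parts~(a) and~(b), I would argue combinatorially once $\omega(g_n) = \widetilde{g}_n$ is in hand. Observe first that a permutation $\pi \in \Perm_n$ avoids $\underline{k\ldots 1}$ iff $D(\pi)$ contains no $k-1$ consecutive integers, which (by a short check: an interval $\{j,j+1,\ldots,j+k-2\}\subseteq D(\pi)$ corresponds exactly to a part of $\co(\overline{D(\pi)})$ of size $\ge k$) is equivalent to $D(\pi) = \overline{I}$ for some $I \in G_n$. Thus by~\eqref{eq:sI},
\[\left\langle h_1^n, \widetilde{g}_n\right\rangle = \sum_{I \in G_n} \left\langle h_1^n, s_{\overline{I}}\right\rangle = \sum_{I \in G_n} \beta_n(\overline{I})\]
is exactly the number of permutations in $\Perm_n$ avoiding $\underline{k\ldots 1}$, proving~(a). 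Part~(b) follows by the identical argument with $L_\lambda$ in place of $h_1^n$ and Theorem~\ref{GRmain} in place of~\eqref{eq:sI}.

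For part~(c), applying $\omega$ to the identity $g_n = \sum_{\mu \vDash n} \ee_\mu h_\mu$ from Theorem~\ref{symmetricfunction}(c), together with $\omega(h_\mu) = e_\mu$, immediately gives $\widetilde{g}_n = \sum_{\mu \vDash n} \ee_\mu e_\mu$. Summing over $n \ge 0$ and using that $\omega$ is a ring automorphism (which commutes with inversion of a formal power series with constant term $1$) transforms the generating function identity of Theorem~\ref{symmetricfunction}(c) into the one stated in Theorem~\ref{omega}(c). The one subtle point in the whole argument is the justification of $\omega(s_I) = s_{\overline{I}}$ via the transpose of the border strip $B_I$: it is a standard consequence of $\omega(s_{\lambda/\mu}) = s_{(\lambda/\mu)'}$, but it requires fixing a convention for how $B_I$ is drawn and checking that conjugation of the resulting skew shape swaps $I$ with $\overline{I}$---everything else is a routine application of the involution.
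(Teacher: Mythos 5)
Your proof is correct and takes essentially the same route as the paper, which obtains Theorem~\ref{omega} from Theorem~\ref{symmetricfunction} via the involution $\omega$ together with the identity $\omega(s_I) = s_{\overline{I}}$ (hence $\omega(g_n) = \widetilde{g}_n$). You simply make explicit the details the paper leaves implicit, namely the border-strip conjugation behind $\omega(s_I) = s_{\overline{I}}$ and the observation that avoiding $\underline{k\dots 1}$ means having descent set $\overline{I}$ for some $I \in G_n$.
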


A different proof of part (c) has been recently given by Yang and Zeilberger \cite{YZ}, who use it to obtain recurrence relations for counting various kinds of words that avoid $\underline{1 \ldots k}$.

For the remainder of this paper, we set $k = 3$, as the results do not carry over to $k > 3$.

We now consider a certain subset of the permutations avoiding $\underline{321}$, namely, the ones that begin and end with an ascent. The enumeration of these permutations is required for Theorems \ref{avoid123} and \ref{avoid321}. Define
\[ G_n^* = \{I \subseteq [n-1] \colon \text{every part of $\co(I)$ has size $\ge 2$}\}. \]
Then $\pi \in \Perm_n$ (for $n \ge 1$) has descent set in $G_n^*$ if and only if $\pi$ avoids $\underline{321}$ and $\pi$ begins and ends with an ascent; thus, the number of such permutations is $\sum_{I \in G_n^*} \beta_n(I)$.

Define the symmetric function $g_n^* = \sum_{I \in G_n^*} s_{I}$, which is the quasi-symmetric generating function for the set of $\pi \in \Perm_n$ such that $\pi^{-1}$ avoids the consecutive pattern $\underline{321}$ and $\pi^{-1}$ begins and ends with an ascent. We set the convention that $g_0^* = 1$ and $g_1^* = 0$.

\begin{defn} \label{defee2} For $r \ge 0$, define
\[ \ee_r^* = \begin{cases} 1 & \text{if $r \equiv 2 \pmod{3}$}; \\
-1 & \text{if $r \equiv 0 \pmod{3}$}; \\
0 & \text{else}. \end{cases} \]
For a composition $\mu = (\mu_1, \ldots, \mu_t)$, define $\ee_\mu^* = \ee_{\mu_1}^* \cdots \ee_{\mu_t}^*$.
\end{defn}

Our next theorem, analogous to Theorem \ref{symmetricfunction}, provides the fundamental facts about $g_n^*$.

\begin{thm} \label{symmetricfunction2} (a) The number of $\pi \in \Perm_n$ such that $\pi$ avoids $\underline{321}$ and $\pi$ begins and ends with an ascent is equal to $\left\langle h_1^n, g_n^* \right\rangle$.

(b) For $\lambda \vdash n$, the number of such $\pi$ with $\type(\pi) = \lambda$ is equal to $\left\langle L_\lambda, g_n^* \right\rangle$.

(c) $g_n^* = (-1)^n \sum_{\mu \vDash n} \ee_\mu^* h_\mu$, and
\[ \sum_{n\ge0} g_n^* = \left(1 - \sum_{r\ge1} (-1)^r \ee_r^* h_r\right)^{-1} = \frac{1}{1 - h_2 - h_3 + h_5 + h_6 - h_8 - h_9 + h_{10} + \cdots}. \]
\end{thm}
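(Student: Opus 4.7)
Parts (a) and (b) are immediate. A subset $I \subseteq [n-1]$ lies in $G_n^*$ precisely when $\co(I)$ has all parts of size at least~$2$, which amounts to saying that $1 \notin I$, $n-1 \notin I$, and $I$ contains no two consecutive integers. For a permutation $\pi$ with $D(\pi) = I$, these three conditions are equivalent to $\pi$ beginning with an ascent, $\pi$ ending with an ascent, and $\pi$ having no two adjacent descents (i.e., avoiding the consecutive pattern $\underline{321}$). Summing Equation~\eqref{eq:sI} over $I \in G_n^*$ gives (a), and summing $\langle L_\lambda, s_I\rangle = \#\{\pi\colon \type(\pi) = \lambda,\ D(\pi) = I\}$ from Theorem~\ref{GRmain} over $I \in G_n^*$ gives (b).

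For part (c), the plan is first to expand each ribbon Schur function in terms of the $h_\mu$. From the elementary identity $h_{\co(I)} = \sum_{J \subseteq I} s_J$ (obtained by decomposing $h$-monomials according to their actual descent set), Möbius inversion on the Boolean lattice $2^{[n-1]}$ yields $s_I = \sum_{J \subseteq I} (-1)^{|I|-|J|}\, h_{\co(J)}$. Substituting into $g_n^* = \sum_{I \in G_n^*} s_I$ and swapping the order of summation, one obtains $g_n^* = \sum_{J \subseteq [n-1]} c_J\, h_{\co(J)}$, where $c_J = \sum_{I \in G_n^*,\ I \supseteq J} (-1)^{|I|-|J|}$. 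The crucial observation is that $c_J$ factors over the parts of $\co(J) = (\mu_1, \ldots, \mu_\ell)$: a set $I \supseteq J$ lies in $G_n^*$ if and only if, within each of the $\ell$ blocks cut out by $J$, the induced refinement has all parts $\ge 2$; hence $c_J = \prod_{i=1}^{\ell} f(\mu_i)$, where $f(m) := \sum_{\rho \vDash m,\, \rho_j \ge 2} (-1)^{\ell(\rho)-1}$.

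The core computation is to evaluate $f$. Summing a geometric series,
\[ \sum_{m \ge 2} f(m)\, z^m = \sum_{k \ge 1} (-1)^{k-1}\!\left(\frac{z^2}{1-z}\right)^{\!k} = \frac{z^2}{1-z+z^2} = \frac{z^2(1+z)}{1+z^3}, \]
whose power-series expansion is $z^2 + z^3 - z^5 - z^6 + z^8 + z^9 - z^{11} - z^{12} + \cdots$. Comparing with Definition~\ref{defee2}, one verifies $f(m) = (-1)^m\, \ee_m^*$ for every $m \ge 1$. Therefore, when $\mu = \co(J) \vDash n$, we have $c_J = \prod_i (-1)^{\mu_i}\, \ee_{\mu_i}^* = (-1)^n\, \ee_\mu^*$, establishing the first equality in~(c).

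The generating-function identity then follows by summing over $n$ and recognizing the sum over all compositions as a geometric series:
\[ \sum_{n \ge 0} g_n^* = \sum_{\mu} \prod_i \bigl((-1)^{\mu_i}\, \ee_{\mu_i}^*\, h_{\mu_i}\bigr) = \sum_{\ell \ge 0} \left(\sum_{r \ge 1} (-1)^r\, \ee_r^*\, h_r\right)^{\!\ell} = \left(1 - \sum_{r \ge 1} (-1)^r\, \ee_r^*\, h_r\right)^{\!-1}, \]
which matches the claimed expression once the values of $\ee_r^*$ are written out. The only subtle point in the argument is tracking signs in the factorization of $c_J$ and in the verification $f(m) = (-1)^m\, \ee_m^*$; the rest is routine symmetric-function and generating-function manipulation.
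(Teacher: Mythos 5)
Your proposal is correct, and for part (c) it takes a genuinely different route from the paper: the paper proves (a) and (b) exactly as you do (from Equation~\eqref{eq:sI} and Theorem~\ref{GRmain}), but for (c) it simply cites Gessel's Ph.D.\ thesis, whereas you supply a self-contained derivation. Your chain of reasoning checks out: the identity $h_{\co(I)} = \sum_{J \subseteq I} s_J$ and its M\"obius inverse $s_I = \sum_{J \subseteq I} (-1)^{|I|-|J|} h_{\co(J)}$ are standard; the coefficient $c_J$ does factor over the blocks of $\co(J)$ because the sign $(-1)^{|I|-|J|}$ is the product over blocks of $(-1)^{\ell(\rho^{(i)})-1}$ for the induced refinements $\rho^{(i)}$; and the generating function $\sum_m f(m)z^m = z^2/(1-z+z^2) = z^2(1+z)/(1+z^3)$ correctly produces the period-$6$ sign pattern, which one verifies equals $(-1)^m \ee^*_m$ in every residue class (including the vanishing at $m \equiv 1 \bmod 3$, e.g.\ $f(4)=f(7)=0$). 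What your approach buys is a transparent explanation of where the sign pattern of Definition~\ref{defee2} comes from (the factor $1+z^3$ in the denominator), at the cost of a page of computation; what the paper's citation buys is brevity and a uniform source for both Theorem~\ref{symmetricfunction}(c) and Theorem~\ref{symmetricfunction2}(c). One small remark: your computation shows the expanded denominator is $1 - h_2 - h_3 + h_5 + h_6 - h_8 - h_9 + h_{11} + h_{12} - \cdots$ (since $\ee^*_{10} = 0$), so the ``$+\,h_{10}$'' appearing in the displayed statement is a typo for $+\,h_{11}$, consistent with the sign pattern of Equation~\eqref{eq:monotone2}; this does not affect the validity of your argument.
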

\begin{proof}
Part (a) follows from Equation (\ref{eq:sI}); part (b) follows from Theorem \ref{GRmain}; part (c) is found in Gessel's Ph.D. thesis \cite[Sec.\ 5.2, Example 4]{GesselThesis}.
\end{proof}

\subsection{Cycles avoiding $\underline{123}$ or $\underline{321}$}
\label{cyc123}

Next we find formulas counting cycles that avoid a monotone consecutive pattern of length~$3$. The analogous problem for classical patterns, that is, without the adjacency requirement, is an open problem that was proposed by Stanley in 2010, and for which there has been only limited partial progress~\cite{ArcEli,Elicont}.

We continue to set $k = 3$, and we fix the cycle type $\lambda = (n)$.
Let $\gam_n$ denote the number of permutations in $\Perm_n$ avoiding $\underline{123}$, and let $\gam_n^*$ denote the number of permutations in $\Perm_n$ avoiding $\underline{321}$ that begin and end with an ascent. We set the convention $\gam_0^* = 1$ and $\gam_1^* = 0$. In this section we will use the expressions for $\gam_n$ and $\gam_n^*$ in terms of symmetric functions given by Theorems \ref{symmetricfunction}(a) and \ref{symmetricfunction2}(a), respectively. 
However, we remark that explicit formulas for their exponential generating functions are known:
\begin{align} \label{eq:monotone} \sum_{n\ge0} \gam_n \frac{x^n}{n!}&= \left( 1 - x + \frac{x^3}{3!} - \frac{x^{4}}{4!} + \frac{x^{6}}{6!} - \frac{x^{7}}{7!} + \cdots\right)^{-1} = \frac{\sqrt{3}}{2} e^{x/2} \sec\left(\frac{\sqrt{3}}{2} x + \frac{\pi}{6}\right), \\
 \label{eq:monotone2} \sum_{n\ge0} \gam_n^* \frac{x^n}{n!}&= \left( 1 - \frac{x^2}{2!} - \frac{x^3}{3!} + \frac{x^5}{5!} + \frac{x^6}{6!} - \frac{x^8}{8!} - \cdots\right)^{-1} =  \frac{\sqrt{3}}{2} e^{-x/2} \sec\left(\frac{\sqrt{3}}{2} x + \frac{\pi}{6}\right).
\end{align}
The first expression in Equation~\eqref{eq:monotone} was obtained in 1962 by David and Barton \cite[p.\ 156--157]{DB}, and the second one by Elizalde and Noy~\cite{EliNoy} as a solution to a differential equation. Equation~\eqref{eq:monotone2} can be derived using the same methods. Note that the pattern of signs in Equations~\eqref{eq:monotone} and \eqref{eq:monotone2} is the same as in Theorems~\ref{symmetricfunction}(c) and \ref{symmetricfunction2}(c) respectively.



Continuing with the notation from Section \ref{MonotoneSubsection1}, we use Theorem \ref{maintheorem} to find formulas for the number of cycles avoiding $\underline{123}$ (Theorem \ref{avoid123}) or avoiding $\underline{321}$ (Theorem \ref{avoid321}), in terms of the numbers $\gam_n$ and $\gam_n^*$.

\begin{thm} \label{avoid123} Define
\[ \theta(n) = \begin{cases}
1 & \text{if $n = 3^a$ with $a \ge 1$}; \\
-2 & \text{if $n = 2 \cdot 3^a$ with $a \ge 1$}; \\
0 & \text{else}.
\end{cases} \]
Then the number of $n$-cycles that avoid $\underline{123}$ is
\[ \frac{1}{n} \left[ \theta(n) + \sum_{\mathclap{\substack{\text{$d \mid n$}\\ \text{$d \equiv 1\bmod 3$}}}} \mob(d)\, \gam_{n/d} + \sum_{\mathclap{\substack{\text{$d \mid n$}\\ \text{$d \equiv 2\bmod 3$}}}} \mob(d)\, (-1)^{n/d}\, \gam_{n/d}^*\right]. \]
\end{thm}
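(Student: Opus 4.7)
The plan is to realize the count $\sum_{I\in G_n}\betacyc_n(I)$ as an inner product of symmetric functions and then expand $L_n$ via power sums. By Theorem \ref{GRmain} and the definition of $g_n$,
\[
\sum_{I \in G_n} \betacyc_n(I) \;=\; \Big\langle L_n,\; \sum_{I \in G_n} s_{I}\Big\rangle \;=\; \langle L_n, g_n\rangle.
\]
Using $L_n = \frac{1}{n}\sum_{d\mid n}\mob(d)\,p_d^{n/d}$ and Theorem \ref{symmetricfunction}(c), this reduces to computing $\langle p_d^{n/d}, g_n\rangle = \sum_{\mu\vDash n}\ee_\mu\,\langle p_d^{n/d}, h_\mu\rangle$ for each divisor $d$ of $n$. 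A short orthogonality check (using $h_\mu = h_{\mu_1}\cdots h_{\mu_\ell}$ and $\langle p_\lambda, h_r\rangle = [\,\lambda\vdash r\,]$) gives
\[
\langle p_d^{n/d}, h_\mu\rangle = \begin{cases} \binom{n/d}{\mu/d} & \text{if } d\mid \mu_j \text{ for every } j, \\ 0 & \text{otherwise.} \end{cases}
\]

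The core step is to analyze $\langle p_d^{n/d}, g_n\rangle$ in three cases according to $d\bmod 3$. In each case I substitute $\nu=\mu/d$ and track how the mod-$3$ conditions defining $\ee_r$ transform. For $d\equiv 1\pmod 3$, the condition ``$\mu_j\in\{r : \ee_r\neq 0\}$'' transfers verbatim to $\nu$, and $\ee_\mu = \ee_\nu$, giving $\langle p_d^{n/d}, g_n\rangle = \langle h_1^{n/d}, g_{n/d}\rangle = \gam_{n/d}$ via Theorem~\ref{symmetricfunction}(a). For $d\equiv 2\pmod 3$, the residues swap: $\mu_j\equiv 1\pmod 3$ with $d\mid \mu_j$ forces $\nu_j\equiv 2\pmod 3$, while $\mu_j\equiv 0$ gives $\nu_j\equiv 0$. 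Thus the sum becomes $\sum_{\nu\vDash n/d}\ee_\nu^*\binom{n/d}{\nu}$, which by Theorem~\ref{symmetricfunction2}(a,c) equals $(-1)^{n/d}\gam^*_{n/d}$.

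The case $d\equiv 0\pmod 3$ is the main obstacle and is where the correction $\theta(n)$ arises. When $3\mid d\mid \mu_j$, each $\mu_j$ is automatically divisible by $3$, so $\ee_{\mu_j}=-1$ for every $j$ and $\ee_\mu=(-1)^{\ell(\mu)}$. After substituting $\nu=\mu/d$ (with no residual mod-$3$ constraint on $\nu$), the inner product becomes
\[
\sum_{\nu\vDash n/d}(-1)^{\ell(\nu)}\binom{n/d}{\nu}.
\]
A routine exponential-generating-function computation, $\sum_{N\ge 0}\sum_{\nu\vDash N}(-1)^{\ell(\nu)}\binom{N}{\nu}\frac{x^N}{N!}=\sum_{\ell\ge 0}(-(e^x-1))^\ell = e^{-x}$, identifies this sum as $(-1)^{n/d}$. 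So the contribution of the $d\equiv 0\pmod 3$ divisors to $n\cdot(\#\,n\text{-cycles avoiding }\underline{123})$ is $\Theta(n):=\sum_{d\mid n,\,3\mid d}\mob(d)\,(-1)^{n/d}$.

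The final step is to verify $\Theta(n)=\theta(n)$. Writing $d=3e$ with $e$ squarefree and coprime to $3$ (the only $d$ surviving the M\"obius factor) and using $3^{a-1}$ is odd, one reduces to $\Theta(n) = -\sum_{e\mid m}\mob(e)(-1)^{m/e}$ where $n=3^a m$ with $\gcd(m,3)=1$, $a\ge 1$. A short case analysis on the $2$-adic valuation of $m$ (splitting divisors by whether $v_2(e)=v_2(m)$) shows this last sum equals $-1,\,2,\,0$ for $m=1,\,m=2,\,m>2$ respectively, matching $\theta(n)=1,-2,0$. Assembling the three cases gives
\[
n\sum_{I\in G_n}\betacyc_n(I) = \theta(n) + \sum_{\substack{d\mid n\\ d\equiv 1\bmod 3}}\mob(d)\,\gam_{n/d} + \sum_{\substack{d\mid n\\ d\equiv 2\bmod 3}}\mob(d)\,(-1)^{n/d}\,\gam^*_{n/d},
\]
which is the claimed formula after dividing by $n$.
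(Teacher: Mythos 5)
Your proposal is correct and follows essentially the same route as the paper: both express the count as $\langle L_n, g_n\rangle$, expand $L_n$ over divisors $d\mid n$ (you via $\langle p_d^{n/d},h_\mu\rangle$ directly, the paper via $\alphacyc_n(\mu)$ and Theorem~\ref{maintheorem}(b), which amounts to the same computation), split into the three cases $d\bmod 3$ with the same identifications $\ee_{d\nu}=(-1)^{\ell(\nu)},\ \ee_\nu,\ \ee_\nu^*$, and evaluate the $d\equiv 0\pmod 3$ contribution as $\theta(n)$. The only cosmetic differences are your exponential-generating-function evaluation of $\sum_\nu(-1)^{\ell(\nu)}\binom{N}{\nu}$ (the paper uses inclusion--exclusion) and your direct case analysis for $\Theta(n)$ (the paper uses M\"obius inversion).
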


\begin{proof}
For convenience, if $\mu = \co(I)$, we write $\beta_n(\mu) = \beta_n(I)$, and similarly for $\alpha_n(\mu)$ and $\alphacyc_n(\mu)$. By Theorem \ref{symmetricfunction}(b), setting $k = 3$, the number of $n$-cycles that avoid $\underline{123}$ is $\left\langle L_n, g_n \right\rangle$. Then, 
by Theorem \ref{symmetricfunction}(c),
\begin{align}
\left\langle L_n, g_n \right\rangle &= \sum_{\mu\vDash n} \ee_\mu \left\langle L_n, h_\mu \right\rangle= \sum_{\mu\vDash n} \ee_\mu \, [x^\mu]L_n = \sum_{\mu\vDash n} \ee_\mu\, \alphacyc_n(\mu) \since{by Theorem \ref{GRcor}} \\
&= \sum_{\mu\vDash n} \ee_\mu \,\frac{1}{n} \sum_{\text{$d \mid \gcd(\mu_1,\ldots)$}} \mob(d)\, \alpha_{n/d}(\mu/d) \since{by Theorem \ref{maintheorem}(b)} \\
&= \frac{1}{n} \sum_{\text{$d \mid n$}} \mob(d) \sum_{\substack{\mu \vDash n \\ \text{$d \mid \mu_i$}}} \ee_\mu\,\alpha_{n/d}(\mu/d)
= \frac{1}{n} \sum_{\text{$d \mid n$}} \mob(d) \sum_{\nu \vDash n/d} \ee_{d\nu}\,\alpha_{n/d}(\nu),\label{eq:star}
\end{align}
where in the last equality we set $\nu = \mu/d$, and so $\mu = d\nu = (d\nu_1, d\nu_2, \ldots)$.

Set $\varphi(d) = \sum_{\nu \vDash n/d} \ee_{d\nu}\,\alpha_{n/d}(\nu)$. To compute $\varphi(d)$, we consider three cases depending on the congruence class of $d\bmod3$.

\noindent\underline{Case $d \equiv 0 \pmod{3}$.} For compositions $\nu, \nu' \vDash n$, write $\nu \le \nu'$ if $\nu$ is a refinement of $\nu'$. Since $d\nu_i \equiv 0 \pmod{3}$ for all $i$, we have $\ee_{d\nu} = (-1)^{\ell(\nu)}$, and so
$$
\varphi(d) = \sum_{\nu \vDash n/d} (-1)^{\ell(\nu)}\,\alpha_{n/d}(\nu) = \sum_{\nu \vDash n/d} (-1)^{\ell(\nu)} \sum_{\nu'\ge\nu} \beta_{n/d}(\nu') = \sum_{\nu' \vDash n/d} \beta_{n/d}(\nu') \sum_{\nu \le \nu'} (-1)^{\ell(\nu)}.
$$
By the Principle of Inclusion--Exclusion, $\sum_{\nu \le \nu'} (-1)^{\ell(\nu)} = 0$ unless $\nu' = (1, \ldots, 1)$, hence $\varphi(d) = (-1)^{n/d}$ in this case.

\noindent\underline{Case $d \equiv 1 \pmod{3}$.} Since $d\nu_i \equiv \nu_i \pmod{3}$ for all $i$, we have $\ee_{d\nu} = \ee_\nu$, and so
\begin{align*}
\varphi(d) &= \sum_{\nu \vDash n/d} \ee_\nu\,\alpha_{n/d}(\nu) = \sum_{\nu \vDash n/d} \ee_\nu\,\binom{n/d}{\nu} = \sum_{\nu \vDash n/d} \ee_\nu\,[x^\nu]{(x_1 + x_2 + \cdots)}^{n/d} \\
&= \sum_{\nu \vDash n/d} \ee_\nu \left\langle h_1^{n/d}, h_\nu \right\rangle = \left\langle h_1^{n/d}, \sum_{\nu \vDash n/d} \ee_\nu \,h_\nu \right\rangle = \left\langle h_1^{n/d}, g_{n/d} \right\rangle=\gam_{n/d},
\end{align*}
where the last two equalities use Theorem \ref{symmetricfunction} parts (c) and (a) respectively.

\noindent\underline{Case $d \equiv 2 \pmod{3}$.} Since $d\nu_i \equiv -\nu_i \pmod{3}$ for all $i$, we have $\ee_{d\nu} = \ee_{\nu}^*$. The same calculation as the previous case shows that
\[ \varphi(d) = \left\langle h_1^{n/d}, \sum_{\nu \vDash n/d} \ee_\nu^* \,h_\nu \right\rangle = (-1)^{n/d} \left\langle h_1^{n/d}, g_{n/d}^* \right\rangle= (-1)^{n/d} \gam_{n/d}^*, \]
using Theorem \ref{symmetricfunction2}.

Combining the results of all three cases, Equation~\eqref{eq:star} becomes
\[ \left\langle L_n, g_n \right\rangle = \frac{1}{n} \left[ \hspace{.25in} \sum_{\mathclap{\substack{\text{$d \mid n$}\\ \text{$d \equiv 0\bmod 3$}}}} \mob(d)\,(-1)^{n/d} + \sum_{\mathclap{\substack{\text{$d \mid n$}\\ \text{$d \equiv 1\bmod 3$}}}} \mob(d)\, \gam_{n/d} + \sum_{\mathclap{\substack{\text{$d \mid n$}\\ \text{$d \equiv 2\bmod 3$}}}} \mob(d)\, (-1)^{n/d}\, \gam_{n/d}^*\right]. \]
It remains to prove that
\[ \sum_{\mathclap{\substack{\text{$d \mid n$}\\ \text{$d \equiv 0\bmod 3$}}}} \mob(d)\,(-1)^{n/d} = \theta(n). \]
Write $n = 3^a m$ with $3\nmid m$. If $a = 0$, then $3\nmid n$, so the sum is $0$, and $\theta(n) = 0$ too. Now assume $a \ge 1$.

If $d \mid n$ is not square-free, then $\mob(d) = 0$, so we assume $d$ is square-free. Since $3\mid d$, we can write $d = 3e$, with $e \mid m$. Then $\mob(d) = \mob(3e) = -\mob(e)$, so
\begin{equation} \sum_{\mathclap{\substack{\text{$d \mid n$}\\ \text{$d \equiv 0\bmod 3$}}}} \mob(d)\,(-1)^{n/d} = -\sum_{\text{$e \mid m$}} \mob(e)\,(-1)^{n/(3e)} = \sum_{\text{$e \mid m$}} \mob(e)\,(-1)^{(m/e)-1}, \label{eq:Mob}\end{equation}
noting that $n$ and $m$ have the same parity. If we define
\[ \theta'(m) = \begin{cases}
1 & \text{if $m = 1$}; \\
-2 & \text{if $m = 2$}; \\
0 & \text{else};
\end{cases} \]
then we see that $\sum_{\text{$e\mid m$}} \theta'(e) = (-1)^{m-1}$, so by M\"obius inversion, the right-hand side of Equation~\eqref{eq:Mob} equals $\theta'(m)$.
Therefore,
\[ \sum_{\mathclap{\substack{\text{$d \mid n$}\\ \text{$d \equiv 0\bmod 3$}}}} \mob(d)\,(-1)^{n/d} = \theta'(m) = \theta(n). \qedhere \]
\end{proof}

By symmetry, $\gamma_n$ also equals the number of permutations in $\Perm_n$ avoiding $\underline{321}$, and $\gamma_n^*$ also equals the number of permutations in $\Perm_n$ avoiding $\underline{123}$ that begin and end with a descent. We conclude this section by using the involution $\omega$ to transform Theorem \ref{avoid123} into an analogous formula for the number of $n$-cycles that avoid $\underline{321}$. 
By the remarks preceding Proposition \ref{complement}, this is equal to the number of $n$-cycles that avoid $\underline{123}$ if $n \not\equiv 2\bmod4$.

\begin{thm} \label{avoid321} Define
\[ \widetilde{\theta}(n) = \begin{cases}
1 & \text{if $n = 3^a$ with $a \ge 1$}; \\
0 & \text{else}.
\end{cases} \]
Then the number of $n$-cycles that avoid $\underline{321}$ is
\[ \frac{1}{n} \left[ \widetilde{\theta}(n) \,+\, \sum_{\mathclap{\substack{\text{$d \mid n$}\\ \text{$d \equiv 1\bmod 3$}}}} \mob(d)\,(-1)^{(d-1)n/d}\, \gam_{n/d} \,+\, (-1)^n \sum_{\mathclap{\substack{\text{$d \mid n$}\\ \text{$d \equiv 2\bmod 3$}}}} \mob(d)\, \gam_{n/d}^*\right]. \]
\end{thm}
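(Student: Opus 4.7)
The strategy is to mirror the proof of Theorem \ref{avoid123} by transferring everything through the involution $\omega$. By Theorem \ref{omega}(b), the number of $n$-cycles avoiding $\underline{321}$ is $\langle L_n, \widetilde g_n\rangle$. Since $\omega$ is a self-adjoint involution on symmetric functions (being diagonal on the power-sum basis, which is orthogonal) and $\widetilde g_n=\omega(g_n)$ by definition, we have
\[ \langle L_n, \widetilde g_n\rangle = \langle \omega(L_n), g_n\rangle. \]
Applying $\omega$ to the power-sum expansion $L_n=\frac{1}{n}\sum_{d\mid n}\mob(d)\,p_d^{n/d}$ and using $\omega(p_d)=(-1)^{d-1}p_d$ yields
\[ \omega(L_n) = \frac{1}{n}\sum_{d\mid n} \mob(d)\,(-1)^{(d-1)n/d}\,p_d^{n/d}, \]
so the entire calculation of Theorem \ref{avoid123} can be rerun with an extra factor of $(-1)^{(d-1)n/d}$ attached to the $d$th term.

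Next, I would expand $g_n=\sum_{\mu\vDash n}\ee_\mu h_\mu$ via Theorem \ref{symmetricfunction}(c) and use the fact that $\langle p_d^{n/d},h_\mu\rangle = [x^\mu]p_d^{n/d}$ vanishes unless $d$ divides every part of $\mu$, in which case it equals $\binom{n/d}{\mu/d}$ (same substitution $z_i=x_i^d$ as in the proof of Theorem \ref{maintheorem}(b)). Setting $\nu=\mu/d$, this turns the inner product into
\[ \langle \omega(L_n), g_n\rangle = \frac{1}{n}\sum_{d\mid n}\mob(d)\,(-1)^{(d-1)n/d}\,\varphi(d), \qquad \varphi(d)=\sum_{\nu\vDash n/d}\ee_{d\nu}\binom{n/d}{\nu}. \]
The three-case evaluation of $\varphi(d)$ modulo $3$ is exactly the one carried out in the proof of Theorem \ref{avoid123}: $\varphi(d)=\gam_{n/d}$ when $d\equiv 1\pmod 3$, $\varphi(d)=(-1)^{n/d}\gam_{n/d}^{*}$ when $d\equiv 2\pmod 3$ (via Theorem \ref{symmetricfunction2}), and $\varphi(d)=(-1)^{n/d}$ when $d\equiv 0\pmod 3$.

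The remaining work is sign bookkeeping. For $d\equiv 0$ or $d\equiv 2\pmod 3$, the combination $(-1)^{(d-1)n/d}\cdot(-1)^{n/d}=(-1)^{dn/d}=(-1)^n$; for $d\equiv 1\pmod 3$ the factor $(-1)^{(d-1)n/d}$ is left in place. This already produces the second and third sums in the statement, and reduces the theorem to identifying
\[ \frac{(-1)^n}{n}\sum_{\substack{d\mid n\\d\equiv 0\bmod 3}}\mob(d) \;=\; \frac{\widetilde\theta(n)}{n}. \]
Writing $n=3^a m$ with $3\nmid m$, only $d=3e$ with $e\mid m$ square-free contribute, and $\mob(3e)=-\mob(e)$, so the sum collapses to $-\sum_{e\mid m}\mob(e)=-[m=1]$. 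When $m=1$, i.e.\ $n=3^a$ with $a\ge 1$, the integer $n$ is odd, so $(-1)^n\cdot(-1)=1$; otherwise the sum vanishes. This matches $\widetilde\theta(n)$ exactly. The main obstacle is that three independent sources of signs must be tracked simultaneously — the M\"obius values $\mob(d)$, the factor $(-1)^{(d-1)n/d}$ introduced by $\omega$, and the signs hidden inside $\ee_{d\nu}$ — but once they are organized in parallel with the proof of Theorem \ref{avoid123} the computation runs smoothly.
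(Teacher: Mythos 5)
Your proposal is correct and follows essentially the same route as the paper's proof: transfer through the self-adjoint involution $\omega$, pick up the sign $(-1)^{(d-1)n/d}$ from $\omega(p_d)=(-1)^{d-1}p_d$, reuse the three-case evaluation of $\varphi(d)$ from Theorem~\ref{avoid123}, and evaluate the residual M\"obius sum to get $\widetilde{\theta}(n)$. The only cosmetic omission is the trivial case $3\nmid n$, where the $d\equiv 0\pmod 3$ sum is empty before any substitution $d=3e$ is made.
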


\begin{proof}
Like in the proof of Theorem \ref{avoid123}, if $\mu = \co(I)$, we write $\beta_n(\mu) = \beta_n(I)$, and similarly for $\alpha_n(\mu)$. By Proposition \ref{omega}(b), the number of $n$-cycles that avoid $\underline{321}$ is $\left\langle L_n, \widetilde{g}_n \right\rangle$. Then
\begin{align*}
\left\langle L_n, \widetilde{g}_n \right\rangle &= \left\langle L_n, \omega(g_n) \right\rangle = \left\langle \omega(L_n), g_n \right\rangle = \sum_{\mu\vDash n} \ee_\mu \left\langle \omega(L_n), h_\mu \right\rangle \since{by Theorem \ref{symmetricfunction}(c)} \\
&= \sum_{\mu\vDash n} \ee_\mu \,\frac{1}{n} \sum_{\text{$d \mid n$}} \mob(d) \left\langle \omega(p_d)^{n/d}, h_\mu \right\rangle 
= \sum_{\mu\vDash n} \ee_\mu \,\frac{1}{n} \sum_{\text{$d \mid n$}} \mob(d)\,(-1)^{(d-1)n/d} \, [x^\mu]p_d^{n/d} \\
&= \sum_{\mu\vDash n} \ee_\mu \,\frac{1}{n} \sum_{\text{$d \mid \gcd(\mu_1,\ldots)$}} \mob(d)\,(-1)^{(d-1)n/d}\, \binom{n/d}{\mu/d} \since{see proof of Theorem \ref{maintheorem}(b)} \\
&= \sum_{\mu\vDash n} \ee_\mu \,\frac{1}{n} \sum_{\text{$d \mid \gcd(\mu_1,\ldots)$}} \mob(d)\,(-1)^{(d-1)n/d}\, \alpha_{n/d}(\mu/d) \\
&= \frac{1}{n} \sum_{\text{$d \mid n$}} \mob(d)\,(-1)^{(d-1)n/d} \sum_{\substack{\mu \vDash n \\ \text{$d \mid \mu_i$}}} \ee_\mu\,\alpha_{n/d}(\mu/d)\\
&= \frac{1}{n} \sum_{\text{$d \mid n$}} \mob(d)\,(-1)^{(d-1)n/d} \sum_{\nu \vDash n/d} \ee_{d\nu}\,\alpha_{n/d}(\nu),
\end{align*}
setting $\nu = \mu/d$, so that $\mu = d\nu = (d\nu_1, d\nu_2, \ldots)$.

The same computation as in the proof of Theorem \ref{avoid123} shows that
\[ \left\langle L_n, \widetilde{g}_n \right\rangle = \frac{1}{n} \left[ (-1)^n \sum_{\mathclap{\substack{\text{$d \mid n$}\\ \text{$d \equiv 0\bmod 3$}}}} \mob(d) + \sum_{\mathclap{\substack{\text{$d \mid n$}\\ \text{$d \equiv 1\bmod 3$}}}} \mob(d)\,(-1)^{(d-1)n/d} \,\gam_{n/d} + (-1)^n \sum_{\mathclap{\substack{\text{$d \mid n$}\\ \text{$d \equiv 2\bmod 3$}}}} \mob(d)\, \gam_{n/d}^*\right]. \]
It remains to prove that
\[ (-1)^n \sum_{\mathclap{\substack{\text{$d \mid n$}\\ \text{$d \equiv 0\bmod 3$}}}} \mob(d) = \widetilde{\theta}(n). \]
Write $n = 3^a m$ with $3\nmid m$. If $a = 0$, then $3\nmid n$, so the sum is $0$, and $\widetilde{\theta}(n) = 0$ too.

Now assume $a \ge 1$, and suppose also that $m > 1$. If $d \mid n$ is not square-free then $\mob(d) = 0$, so we assume $d$ is square-free. Since $3$ divides $d$, we can write $d = 3e$, with $e \mid m$. Then $\mob(d) = \mob(3e) = -\mob(e)$, so
\[ (-1)^n \sum_{\mathclap{\substack{\text{$d \mid n$}\\ \text{$d \equiv 0\bmod 3$}}}} \mob(d) = (-1)^{n-1} \sum_{\text{$e \mid m$}} \mob(e), \]
which is $0$ because $m > 1$, and $\widetilde{\theta}(n) = 0$ too. Finally, if $m = 1$, then $n = 3^a$, so
\[ (-1)^n \sum_{\mathclap{\substack{\text{$d \mid n$}\\ \text{$d \equiv 0\bmod 3$}}}} \mob(d) = (-1)^{3^a} \mob(3) = 1 = \widetilde{\theta}(n). \qedhere \]
\end{proof}

Unfortunately, the proof techniques for Theorems \ref{avoid123} and \ref{avoid321} do not easily generalize to the case of permutations avoiding  $\underline{1\dots k}$ or $\underline{k\dots 1}$ for $k \ge 4$.


\begin{thebibliography}{}

\bibitem{Apostol} T. M. Apostol, {\it Introduction to Analytic Number Theory}, Springer, New York, 1976. 

\bibitem{ArcEli} K. Archer and S. Elizalde, Cyclic permutations realized by signed shifts, {\it J. Comb.}\ \textbf{5} (2014), 1--30. 

\bibitem{BHR} E. A. Bender, W. J. Helton and L. B. Richmond, Asymptotics of permutations with nearly periodic patterns of rises and falls, {\it Electron.\ J. Combin.}\ \textbf{10} (2003).

\bibitem{DB} F.~N. David and D.~E. Barton, \textit{Combinatorial Chance}, Hafner, New York, 1962.

\bibitem{DMP} P.~Diaconis, M.~McGrath and J.~Pitman, Riffle shuffles, cycles, and descents, \textit{Combinatorica} \textbf{15} (1995), 11--29. 

\bibitem{EM} R. Ehrenborg and S. Mahajan, Maximizing the descent statistic, {\it Annals of Ann.\ Comb.}\ \textbf{2} (1998), 111--129. 

\bibitem{Elizalde} S. Elizalde, Descent sets of cyclic permutations, {\it Adv.\ in Appl.\ Math.}\ \textbf{47} (2011), 688--709. 

\bibitem{Elifpexc} S. Elizalde, Fixed points and excedances in restricted permutations, {\it  Electron.\ J. Combin.}\ \textbf{18} (2012), \#P29.

\bibitem{Elicont} S.~Elizalde, Continued fractions for permutation statistics, {\it Discrete Math.\ Theor.\ Comput.\ Sci.}\ \textbf{19(2)} (2018), \#11.

\bibitem{Eli_survey} S. Elizalde, {A survey of consecutive patterns in permutations}, chapter of the book {\it Recent Trends in Combinatorics (IMA Volume in Mathematics and its Applications)} 601--618, Springer, 2016.

\bibitem{EliNoy} S. Elizalde and M. Noy, Consecutive patterns in permutations, {\it Adv.\ Appl.\ Math.}\ 30 (2003), 110--125.

\bibitem{EliPak} S. Elizalde and I. Pak, Bijections for refined restricted permutations, {\it J. Combin.\ Theory Ser.\ A} \textbf{105} (2004), 207--219.


\bibitem{FS} P. Flajolet and R. Sedgewick, {\it Analytic Combinatorics}, Cambridge Univ.\ Press, Cambridge, 2009.

\bibitem{GesselThesis} I. M. Gessel, \textit{Generating Functions and Enumeration of Sequences}, PhD thesis, Massachusetts Institute of Technology (1977).

\bibitem{GR} I. M. Gessel and C. Reutenauer, Counting permutations with given cycle structure and descent set, {\it J. Combin.\ Theory Ser.\ A} \textbf{64} (1993), 189--215.

\bibitem{Gessel} I.~M. Gessel, Multipartite $P$-partitions and inner products of skew Schur functions, in: \textit{Combinatorics and Algebra}, Contemp.\ Math.\ \textbf{34}, Amer.\ Math.\ Soc., Providence, RI, 1984, pp.\ 289--317.

\bibitem{L} M. Lothaire, {\it Combinatorics on Words}, Addison--Wesley, Reading, MA, 1983.

\bibitem{NRT} J.-C. Novelli, C.~Reutenauer and J.-Y. Thibon, Generalized descent patterns in permutations and associated Hopf algebras, {\it European J. Combin.}\ \textbf{32} (2011), 618--627.

\bibitem{PoirierThesis} S.~Poirier, Fonctions sym\'etriques, ensembles de descentes et classes de conjugaison dans les produits en couronne, PhD thesis, Universit\'e du Qu\'ebec \`a Montr\'eal (1995).

\bibitem{Poirier} S.~Poirier, Cycle type and descent set in wreath products, \textit{Discrete Math.}\ \textbf{180} (1998), 315--343.

\bibitem{Reiner} V. Reiner, Signed permutation statistics and cycle type, {\it European J. Combin.}\ \textbf{14} (1993), 569--579. 

\bibitem{Reutenauer} C. Reutenauer, \textit{Free Lie Algebras}, Oxford University Press, Oxford, 1993.  

\bibitem{RSZ} A. Robertson, D. Saracino and D. Zeilberger, Refined Restricted Permutations, {\it Ann.\ Combin.}\ \textbf{6} (2003), 427--444.



\bibitem{StanleyAlt} R. P. Stanley, Alternating permutations and symmetric functions, {\it J. Combin.\ Theory Ser.\ A} \textbf{114} (2007), 436--460. 

\bibitem{Stanley1} R. P. Stanley, {\it Enumerative Combinatorics Vol.\ I}, Second edition, Cambridge Stud.\ Adv.\ Math.\ \textbf{49}, Cambridge Univ.\ Press, Cambridge, 2012. 

\bibitem{Stanley2} R. P. Stanley, {\it Enumerative Combinatorics Vol.\ II}, Cambridge Stud.\ Adv.\ Math.\ \textbf{62}, Cambridge Univ.\ Press, Cambridge, 1999. 

\bibitem{StanleySurvey} R. P. Stanley, A survey of alternating permutations, {\it Contemp.\ Math.}\ \textbf{531} (2010), 165--196. 

\bibitem{Steinhardt} J. Steinhardt, Permutations with ascending and descending blocks, {\it Electron.\ J. Combin.}\ \textbf{17} (2010), \#R14.

\bibitem{YZ} M.~Yang and D.~Zeilberger, Increasing consecutive patterns in words, {\it J. Algebraic Combin.}\ (2019), \url{https://doi.org/10.1007/s10801-018-0868-5}.

\end{thebibliography}
\end{document}